\declaretheorem{theorem}
\numberwithin{theorem}{section}
\declaretheorem[sibling=theorem]{corollary, lemma, proposition, question, definition, conjecture, example, remark}
\numberwithin{equation}{section}
\newcommand{\zz}{{\mathbb Z}}
\newcommand{\nn}{{\mathbb N}}
\newcommand{\RR}{{\mathbb R}}
\newcommand{\ZZ}{{\mathbb Z}}
\newcommand{\NN}{{\mathbb N}}
\newcommand{\nno}{{\mathbb N}_0}
\newcommand{\Z}{\mathbb{Z}}
\newcommand{\bbA}{\mathbb{A}}
\newcommand{\bbB}{\mathbb{B}}
\newcommand{\bbT}{\mathbb{T}}
\DeclareMathOperator{\Div}{Div}
\DeclareMathOperator{\lcm}{lcm}
\newcommand{\supp}{{\mathrm{supp}}}
\newcommand{\bfT}{\mathbf{T}}
    \newcommand{\calm}{\mathcal{M}}
\newcommand{\calD}{\mathcal{D}}
\newcommand{\cald}{\mathcal{D}}
\newcommand{\capmin}{\hbox{\sf MIN}}
\newcommand{\capexp}{\hbox{\sf EXP}}
\newcommand{\capfib}{\hbox{\sf FIB}}
\begin{document}
\title{Lower bounds for mask polynomials with many cyclotomic divisors}
\author{Gergely Kiss, Izabella {\L}aba, Caleb Marshall, G\'abor Somlai}

\date{\today}

\begin{abstract}
Given a nonempty set $A \subset \mathbb{N}\cup\{0\}$, define the mask polynomial $A(X)=\sum_{a\in A} X^a$. Suppose that there are $s_1,\dots,s_k\in\nn\setminus\{1\}$ such that the cyclotomic polynomials $\Phi_{s_1},\dots,\Phi_{s_k}$ divide $A(X)$. What is the smallest possible size of $A$? For $k=1$, this was answered by Lam and Leung in 2000. Less is known about the case when $k\geq 2$; in particular, one may ask whether (similarly to the $k=1$ case) the optimal configurations have a simple ``fibered" structure on each scale involved. We prove that this is true in a number of special cases, but false in general, even if further strong structural assumptions are added. Results of this type are expected to have a broad range of applications, including
Favard length of product Cantor sets, Fuglede's spectral set conjecture, and the Coven-Meyerowitz conjecture on integer tilings.

\end{abstract}

\subjclass[2020]{Primary 05B45, 20K01, 11C08; Secondary 11B75}
\keywords{integer tilings, factorization of polynomials}

\maketitle

\section{Introduction}\label{sec-intro}

\subsection{Overview}
Let $A$ be a nonempty set in $ \nno:= \mathbb{N}\cup\{0\}$.
We define the \textit{mask polynomial} of $A$ to be
$$
A(X) : = \sum_{a } X^a.
$$

Recall that the $s$-th cyclotomic
polynomial $\Phi_s(X)$ is the unique monic irreducible polynomial  in $\mathbb{Q}[x]$ whose roots are the
primitive $s$-th roots of unity. Equivalently, $\Phi_s$ can be computed from
the identity $X^n-1=\prod_{s \mid n}\Phi_s(X)$. In particular, if $p$ is a prime number, then
\begin{equation}\label{e-primecyclo}
\Phi_p(X)=\frac{X^p-1}{X-1}=1+X+X^2+\dots + X^{p-1}.
\end{equation}

We are interested in lower bounds on the size of sets whose mask polynomials have prescribed cyclotomic divisors. A classic result of Lam and Leung
 \cite{LamLeung} implies that if $\Phi_s(X)\mid A(X)$ for some $s>1$, we must have
\begin{equation}\label{e-lamleung}
|A| \geq \min \{p:\ p\mid s,\ p \hbox{ is prime}\}.
\end{equation}
We consider the following more general question. Let $S\subset \NN \setminus\{1\}$ be nonempty, and let $A\subset\nno$ be a nonempty set such that
$\Phi_s\mid A(X)$ for all $s\in S$. What is the minimal size of $A$?
In other words, what can we say about the quantity
\begin{equation}\label{capmin1}
\capmin(S) : = \min \{ \vert A \vert : \ A\neq\emptyset\hbox{ and }\Phi_s(X)\mid A(X)\hbox{ for all }s\in S\}?
\end{equation}

A natural question is whether the minimum in (\ref{capmin1}) is attained by sets $A$ that have a particularly simple ``fibered" structure on each scale $s\in S$. (We provide the definitions in Section \ref{subsec-fibered} below.) This turns out to be false in general, with one counterexample already given in \cite[Section 6.3]{LM}. In this article, we construct such counterexamples with the additional assumption that the least common multiple of the elements of $S$, which we denote by $\lcm(S)$, has only two distinct prime factors.
The latter assumption imposes strong structural constraints on any set $A$ contributing to (\ref{capmin1}) (see Lemma \ref{structure-thm}), so that examples with two prime factors are more unexpected and
significantly more difficult to construct. On the other hand, we are able to identify a number of special cases where the ``fibered lower bound" does hold.

Our interest in lower bounds on $\capmin(S)$ is motivated by several potential applications. One concerns the Coven-Meyerowitz conjecture on characterizing finite integer tiles; we discuss the conjecture and its relation to lower bounds on (\ref{capmin1}) in Section \ref{CM-subsection}. Similar issues have also arisen in the study of Fuglede's spectral set conjecture. In its original formulation \cite{Fug}, the conjecture states that a set $\Omega\subset \RR^n$ of positive $n$-dimensional Lebesgue measure tiles $\RR^n$ by translations if and only if the space $L^2(\Omega)$ admits an orthogonal basis of exponential functions (we refer to such sets as {\em spectral}).
While the original conjecture is now known to be false in its full generality, there remain important special cases where its status is unknown. In dimension 1, the Coven-Meyerowitz conjecture is known to imply that tiles are spectral sets (see \cite{dutkay-lai} for a summary of the argument). Conversely, to prove that spectral sets in a given group are tiles, one has to show that sets $A$ with small cardinality and many cyclotomic divisors of $A(X)$ must have very rigid structure. See e.g., \cite{KMSV, KMSV2} for examples and further references.

In a different direction, the question of bounding (\ref{capmin1}) from below has also come up in the study of the {\it Favard length} of product Cantor sets in the plane, an important question in geometric measure theory; see \cite{BLV,LM} for more details.

\subsection{The fibered lower bound}\label{subsec-fibered}
Let $N\in\NN$, and let $p$ be a prime such that $p\mid N$. We define
\begin{equation}\label{N-fiber}
F^N_p(X)=\Phi_p(X^{N/p})= 1+X^{N/p}+\dots+X^{(p-1)N/p}.
\end{equation}
This is a special case of a {\it $p$-fiber  on scale $N$} (see Section \ref{subsec-coord} for the definition).
We note that
$$
F^N_p(X)=\frac{X^N-1}{X^{N/p}-1},
$$
so that $\Phi_s \mid F^N_p$ if and only if $s \mid N$ but $s\nmid\frac{N}{p}$. In other words, if $\alpha$ is the exponent such that $p^\alpha\parallel N$, then $\Phi_s \mid F^N_p$ if and only if $p^\alpha  \mid s \mid N$. (Here and below, we use the notation $p^\alpha\parallel N$ to indicate that $p^\alpha  \mid N$ but $p^{\alpha+1}\nmid N$.)
In particular, the set $F^N_p=\{0,N/p,\dots,(p-1)N/p\}$ with the mask polynomial $F^N_p(X)$ has cardinality $p$, but $F^N_p(X)$ may have as many cyclotomic factors as we like, depending on $N$. Thus a large number of cyclotomic divisors of $A(X)$ does not, by itself, guarantee that $A$ has large cardinality. To force an increase in size, we need additional assumptions on $A$, $S$, or both. (This was already noted in \cite{LM}.)

More generally, we will say that a set $A\subset\nno$ is {\it fibered} on scale $N$ if there exists a prime $p \mid N$ such that
\begin{equation}\label{N-fibered}
F^N_p(X) \mid A(X).
\end{equation}
Of course, if $\Phi_s \mid F^N_p$ and (\ref{N-fibered}) holds, then $\Phi_s$ also divides $A(X)$.

For a finite and nonempty set $S\subset(\nn\setminus\{1\})$, we define $\capfib(S)$ to be the smallest size of a nonempty set $A\subset\nno$ such that $A$ is fibered on each scale $s\in S$.
For any such $A$, we clearly have $\Phi_s \mid A$ for each $s\in S$ (although $A$ may also have other cyclotomic divisors). Hence
\begin{equation}\label{min-fibered}
\capmin(S)\leq \capfib(S).
\end{equation}

An easy case when the equality holds in (\ref{min-fibered}) is as follows.

\begin{lemma}\label{lemma-prime-power}
Let $p$ be a prime number. Assume that $S=\{p^{\alpha_1},\dots,p^{\alpha_m}\}$, where $\alpha_1,\dots,\alpha_m\in\nn$ are all distinct. Then
\begin{equation}\label{eq-primepower}
\capmin(S)= \capfib(S)=p^m.
\end{equation}
\end{lemma}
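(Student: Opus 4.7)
The plan is to sandwich the common value between $p^m$ and $p^m$ via the chain $p^m \leq \capmin(S) \leq \capfib(S) \leq p^m$, where the middle inequality is (\ref{min-fibered}). Crucially, in the prime-power setting the two notions on the outside reduce to the \emph{same} divisibility condition: since $p$ is the only prime dividing $p^{\alpha_i}$, the definition of $F^N_p$ gives
\[
F^{p^{\alpha_i}}_p(X) \,=\, \Phi_p\bigl(X^{p^{\alpha_i - 1}}\bigr) \,=\, \Phi_{p^{\alpha_i}}(X),
\]
so being fibered on scale $p^{\alpha_i}$ is literally the same as asking $\Phi_{p^{\alpha_i}} \mid A(X)$. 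Thus the entire problem reduces to establishing both bounds for a single quantity.

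For the upper bound $\capfib(S) \leq p^m$, I would exhibit the explicit candidate
\[
A(X) \,=\, \prod_{i=1}^m F^{p^{\alpha_i}}_p(X).
\]
After reindexing so that $\alpha_1 < \alpha_2 < \cdots < \alpha_m$, the exponents appearing in the expansion of $A(X)$ are the sums $\sum_{i=1}^m k_i p^{\alpha_i - 1}$ with $k_i \in \{0, 1, \ldots, p-1\}$, and these sums are pairwise distinct because each $k_i$ occupies its own base-$p$ digit position $\alpha_i - 1$ (here the distinctness of the $\alpha_i$ is essential, since otherwise digits would collide and the product would fail to be a $0/1$ polynomial). Hence $A(X)$ is a genuine mask polynomial with $|A| = p^m$, and by construction it is fibered on every scale in $S$.

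For the matching lower bound $\capmin(S) \geq p^m$, I would argue purely via evaluation at $X=1$. Suppose $\Phi_{p^{\alpha_i}} \mid A(X)$ for each $i$. Since the $\Phi_{p^{\alpha_i}}$ are pairwise distinct monic irreducible elements of $\ZZ[X]$, their product divides $A(X)$ in $\ZZ[X]$, and we may write $A(X) = Q(X) \prod_{i=1}^m \Phi_{p^{\alpha_i}}(X)$ with $Q \in \ZZ[X]$. Using the standard identity $\Phi_{p^\alpha}(1) = p$ for every $\alpha \geq 1$, evaluation at $X = 1$ yields
\[
|A| \,=\, A(1) \,=\, Q(1)\cdot p^m.
\]
Since $|A|$ is a positive integer and $Q(1) \in \ZZ$, we must have $Q(1) \geq 1$, so $|A| \geq p^m$. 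There is no serious obstacle here: the whole argument is a one-line count via $X=1$ paired with an explicit construction, and the delicate interaction between different primes that drives the more general problem treated in this paper is entirely absent in the prime-power case.
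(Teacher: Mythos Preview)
Your proof is correct and follows essentially the same route as the paper: both argue the lower bound by noting that $\prod_i \Phi_{p^{\alpha_i}}(X)$ divides $A(X)$ and evaluating at $X=1$ using $\Phi_{p^\alpha}(1)=p$, and both exhibit the product set $A_0(X)=\prod_i \Phi_{p^{\alpha_i}}(X)$ as a fibered witness of size $p^m$, relying on the identity $\Phi_{p^{\alpha}}(X)=F^{p^{\alpha}}_p(X)$. Your explicit identification of the exponents as $\sum_i k_i p^{\alpha_i-1}$ (distinct base-$p$ digit positions) is exactly the distinctness check the paper alludes to.
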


\begin{proof}
Suppose that $\Phi_{p^{\alpha_1}}(X)\dots \Phi_{p^{\alpha_m}}(X)$ divides $A(X)$. Then
$$
p^m=\Phi_{p^{\alpha_1}}(1)\dots \Phi_{p^{\alpha_m}}(1)\ \Big| \ A(1)=|A|.
$$
In particular, $|A|\geq p^m$. Furthermore, let
$$
A_0:=\left\{ \sum_{j=1}^m a_jp^{\alpha_j-1}:\ a_j\in\{0,1,\dots,p-1\},\ j=1,\dots,m \right\}.
$$
Then $|A_0|=p^m$ (it is easy to see that the elements of $A_0$ listed above are all distinct), and by (\ref{e-primecyclo}), $A_0$ has the mask polynomial $A_0(X):=\Phi_{p^{\alpha_1}}(X)\dots \Phi_{p^{\alpha_m}}(X)$. Since
$$
\Phi_{p^\alpha}(X)=1+X^{p^{\alpha-1}}+\dots+X^{(p-1)p^{\alpha-1}}
=F^{p^\alpha}_p(X),
$$
$A_0$ is fibered on each of the scales $p^{\alpha_1},\dots,p^{\alpha_m}$.  This proves the lemma.
\end{proof}

The question is significantly more difficult when $\lcm(S)$ has two or more distinct prime factors.
In Section \ref{sec-fib-bound-works}, we investigate
several special cases when (\ref{min-fibered}) holds with equality. In particular, we prove the following.

\begin{theorem}\label{thm-3divisors}
Let $S\subset(\nn\setminus\{1\})$ satisfy $1\leq |S|\leq 3$, and assume that $\lcm(S)$ has at most two distinct prime factors. Then
$\capmin(S)= \capfib(S)$.
\end{theorem}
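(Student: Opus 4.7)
The plan is to reduce immediately to the case where $\lcm(S)$ has exactly two prime factors, since the single-prime case is Lemma \ref{lemma-prime-power}. Write $\lcm(S)=p^\alpha q^\beta$ with distinct primes $p<q$ and $\alpha,\beta\geq 1$, and record each $s=p^aq^b\in S$ by its coordinate $(a,b)$ in the grid $\{0,\dots,\alpha\}\times\{0,\dots,\beta\}$. I would then proceed by case analysis on $|S|$ and on the shape of $S$ in this grid. In every case, the upper bound on $\capfib(S)$ comes from an explicit construction: take a product of suitably rescaled fibers $F^{s_i}_{r_i}$, with a choice of prime $r_i\in\{p,q\}$ dividing $s_i$, and verify by a short exponent-comparison argument that the expanded product has no coefficient collisions, so it is a $0$-$1$ polynomial and hence the mask polynomial of a genuine set. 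The matching lower bound on $\capmin(S)$ I would obtain by combining three ingredients: (i) the Lam--Leung inequality (\ref{e-lamleung}) applied at each individual $s\in S$; (ii) the elementary fact that $\prod_{p^a\in S}p$ divides $|A|=A(1)$, since $\Phi_{p^a}(1)=p$ while $\Phi_{p^aq^b}(1)=1$ for $a,b\geq 1$; and (iii) the structure lemma (Lemma \ref{structure-thm}), which strongly constrains any $A$ having $\Phi_s\mid A(X)$ on several two-prime scales.

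For $|S|=1$ with $s=p^aq^b$ and $a,b\geq 1$, Lam--Leung gives $|A|\geq p$ and the fiber $F^s_p$ realizes this, yielding $\capmin(S)=\capfib(S)=p$. For $|S|=2$, write $s_i=p^{a_i}q^{b_i}$; after relabeling we may assume $a_1\leq a_2$, and then either $b_1\leq b_2$ (a divisibility chain) or $b_1>b_2$ (an incomparable pair). I would handle each of the resulting sub-subcases, including those where one of $s_1,s_2$ is a prime power, by pairing an explicit product-of-fibers construction against a lower bound drawn from ingredients (i)--(iii) above.

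The heart of the theorem is the case $|S|=3$, which I expect to be the main obstacle. The configuration of $\{(a_i,b_i)\}_{i=1}^3$ in the grid falls into a handful of essentially distinct shapes: a totally ordered divisibility chain, an ``L-shape'' (two scales sharing one coordinate together with a third off-axis scale), and a ``staircase'' (all three $a_i$ distinct and all three $b_i$ distinct, with no divisibility chain). The chain case I expect to be relatively routine: an iterated product-of-fibers construction gives the upper bound, and the lower bound follows by applying Lam--Leung at the divisibility-maximal $s_i$ together with Lemma \ref{structure-thm} to force fibers on the remaining scales. The hardest subcase is expected to be a staircase in which no $s_i$ is a prime power: here ingredient (ii) is vacuous and Lam--Leung gives only $|A|\geq p$, so the lower bound must come almost entirely from Lemma \ref{structure-thm}, which I would use to show that any $A$ of near-minimal size must be fibered on at least one of the three scales. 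Peeling off that fiber then reduces the problem to the $|S|=2$ case already established.
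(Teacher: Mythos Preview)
Your overall framework is sound and matches the paper's: reduce to two primes, handle $|S|\le 2$ by combining Lam--Leung with the structure lemma (this is the paper's Corollary \ref{cor-2exponents}), and identify the ``staircase'' configuration as the crux of $|S|=3$. The upper-bound side via explicit fiber products is also essentially what the paper does in Proposition \ref{prop-multifibered}.

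The genuine gap is in your treatment of the staircase case. Your plan is to use Lemma \ref{structure-thm} to show that a near-minimal $A$ must be \emph{fibered} on at least one scale, then peel off that fiber to reduce to $|S|=2$. Two problems arise. First, the structure lemma only says $A\bmod s$ is a \emph{nonnegative combination} of $p$- and $q$-fibers; it does not give you that $A$ is fibered in a single direction on any scale, and you do not indicate how to force this from a size assumption alone. Second, and more seriously, ``peeling'' a $p$-fiber on scale $s_i$ splits $A$ into $p$ pieces lying in distinct residue classes mod $p^{a_i}$; these pieces inherit divisibility by $\Phi_{s_j}$ only when the $p$-exponent of $s_j$ exceeds $a_i$ (via Lemma \ref{grid-split}). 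So peeling reduces to $|S|=2$ only when the fiber you find sits at the scale of \emph{minimal} $p$-exponent. In the staircase, it can happen that on the minimal-$p$-exponent scale $A$ has \emph{only} $q$-fibers, and on the minimal-$q$-exponent scale $A$ has \emph{only} $p$-fibers: then neither peel is available, and your ingredients (i)--(iii) give only $pq\mid |A|$, which is weaker than the required $p^3$ when $q<p^2$.

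The paper closes this gap differently. When the ``bad'' scenario above occurs, one does not get a reduction to $|S|=2$; instead one acquires two extra \emph{prime-power} divisors $\Phi_{p^{a_\mu}}$ and $\Phi_{q^{b_1}}$, leaving a three-term configuration of the form $\{p^{a_\mu},\, p^{\alpha}q^{\beta},\, q^{b_1}\}$. These configurations are handled by two separate, nontrivial lemmas (Propositions \ref{3divisors} and \ref{3.2divisors}), which combine the structure lemma on the mixed scale with grid-restriction and a divisibility-counting argument to force $|A|\ge pq\min(p,q)$. Your proposal contains no analogue of these lemmas, and without them the staircase lower bound is incomplete.
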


However, it is also possible for the inequality to be strict, and this can happen even if $\lcm(S)$ has only two prime factors.

\begin{theorem}\label{thm-smallexamples}
There exist finite and nonempty sets $S\subset(\nn\setminus\{1\})$ such that
$$
\capmin(S)< \capfib(S)
$$
and $\lcm(S)$ has two distinct prime factors.
\end{theorem}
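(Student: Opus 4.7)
The plan is to exhibit explicit sets $A \subset \nno$ and $S \subset \nn \setminus \{1\}$ with $\lcm(S)$ having exactly two distinct prime factors and with $\capmin(S) < \capfib(S)$. My approach proceeds in three steps: the construction of a candidate $A$, the derivation of a lower bound on $\capfib(S)$, and an estimation of the gap.

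For the construction, I would fix distinct primes $p,q$ (most naturally $p=2$, $q=3$) and a modulus $N = p^a q^b$ with small exponents. Since Theorem~\ref{thm-3divisors} rules out $|S| \leq 3$, the desired $S$ must have at least four elements. I would then search among $0$-$1$ polynomials for an $A(X)$ divisible by $\Phi_s(X)$ for every $s$ in a chosen subset $S \subseteq \{d \mid N : d > 1\}$, arranged so that there is a witness scale $s^* \in S$ for which the fibration condition fails: for every prime $r \mid s^*$, some cyclotomic factor $\Phi_d$ with $d \mid s^*$, $d < s^*$, and $r^\alpha \mid d$ (where $r^\alpha \parallel s^*$) fails to divide $A(X)$. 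This ensures $F_r^{s^*}(X) \nmid A(X)$ for every prime $r \mid s^*$, so that $A$ is not fibered on $s^*$, and yields $\capmin(S) \leq |A|$ by construction.

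For the lower bound $\capfib(S) > |A|$, let $B$ be any set fibered on every scale in $S$. For each $s \in S$, a choice of fibration prime $p_s \mid s$ forces $F_{p_s}^s(X) \mid B(X)$, which in particular forces the prime-power cyclotomic $\Phi_{p_s^{\alpha_s}}(X)$ to divide $B(X)$ (where $p_s^{\alpha_s} \parallel s$). Using $\Phi_{r^k}(1)=r$ and $\Phi_m(1)=1$ for $m$ with at least two distinct prime factors, evaluation at $X=1$ gives $|B|$ as a multiplicative expression in these forced prime-power divisors. Minimizing over all admissible $(p_s)_{s \in S}$, I would verify that the resulting minimum strictly exceeds $|A|$. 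Intuitively, the non-fibered structure of $A$ at scale $s^*$ means that any fibered $B$ must contain at least one prime-power cyclotomic divisor not present in $A(X)$, contributing an extra factor of $p$ or $q$ in the product for $|B|$.

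The main obstacle is the explicit construction of $A$. The $0$-$1$ coefficient constraint on mask polynomials often forces additional cyclotomic factors beyond those prescribed by $S$, and these extra factors can include precisely the $\Phi_d$ needed to restore fibration on $s^*$. Avoiding this requires a delicate balancing act, typically achieved by designing $A$ as a union of shifted arithmetic progressions whose mask polynomial factors in a controlled way. The structure theorem (Lemma~\ref{structure-thm}), which constrains the form of $\capmin$-minimizers when $\lcm(S)$ has two prime factors, should further reduce the $\capfib(S) > |A|$ verification to a finite case analysis over the fibration choices $(p_s)_{s \in S}$.
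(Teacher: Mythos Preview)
Your proposal is a research plan, not a proof: you never exhibit an explicit set $A$ or an explicit $S$, and the statement is an existence claim that can only be established by actually producing a witness. Saying that you ``would search among $0$-$1$ polynomials'' and ``would verify'' the inequality is not a substitute for carrying out the construction. The difficulty you flag in your final paragraph---that the $0$-$1$ constraint tends to force extra cyclotomic divisors that restore fibration---is precisely the crux of the problem, and you give no mechanism for overcoming it.

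The missing idea is the following recombination trick, which is what the paper's construction in Proposition~\ref{prop:countex1} (and its variants) supplies. One first builds a \emph{multiset} $B\in\calm^+(\ZZ_{pq})$ with $p=2$, $q=3$, given by an explicit $2\times 3$ weight table, such that (i) all $pq$-cuboid evaluations vanish, forcing $\Phi_{pq}\mid B$, and (ii) each column sum is divisible by $p^3$ and each row sum by $q^3$. Condition (ii) is what allows the lift: one then constructs $A\subset\ZZ_M$ with $A\equiv B\bmod pq$ by distributing the mass in each residue class into disjoint long $p^3$-fibers (using the higher $p$-adic digits) and, independently, into disjoint long $q^3$-fibers (using the higher $q$-adic digits). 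This yields $|A|=p^3q^3$ together with the seven divisors $\Phi_{p^n},\Phi_{p^{n-1}},\Phi_{p^{n-2}},\Phi_{q^m},\Phi_{q^{m-1}},\Phi_{q^{m-2}},\Phi_{pq}$, while $\capfib(S)$ for the corresponding $S$ is easily checked to exceed $p^3q^3$. The point is that the construction does not proceed by guessing a $0$-$1$ polynomial; it starts from a carefully engineered weighted configuration on the bottom scale and uses the Chinese Remainder structure to realise the required divisibilities on independent coordinate blocks. Your plan of ``shifted arithmetic progressions'' does not come close to this, and a brute-force search is infeasible since the smallest examples already require moduli such as $2^9\cdot 3^6$ or $2^4\cdot 3^4$.
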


\subsection{Integer tilings and the Coven-Meyerowitz conjecture}
\label{CM-subsection}

Let $A\subset\ZZ$ be finite and nonempty.
We say that $A$ {\em tiles the integers by translations} if
there exists a translation set $T\subset\ZZ$ such that every integer $n\in\ZZ$ can be written uniquely as $n=a+t$ with $a\in A$ and $t\in T$.

It is well known \cite{New} that any tiling of $\ZZ$ by a finite set $A$ must be periodic, so that there exists an $M\in\NN$ and a finite set $B\subset \ZZ$ such that $T=B\oplus M\ZZ$. 
In other words, $A\oplus B$ mod $M$ is a factorization of the cyclic group $\ZZ_M$. We write this as
\begin{equation}\label{mask-e0}
A\oplus B=\ZZ_M.
\end{equation}
By translational invariance, we may assume that
$A,B\subset \nno$.
Then (\ref{mask-e0}) can be rewritten in terms of the mask polynomials of $A$ and $B$:
\begin{equation}\label{mask-e1}
 A(X)B(X)\equiv 1+X+\dots+X^{M-1}\ \mod (X^M-1).
\end{equation}
Since $1+X+\dots+X^{M-1}=\prod_{s \mid M,s\neq 1}\Phi_s(X)$,
(\ref{mask-e0}) is further equivalent to
\begin{equation}\label{mask-e2}
  |A||B|=M\hbox{ and }\Phi_s(X) \mid  A(X)B(X)\hbox{ for all }s \mid M,\ s\neq 1.
\end{equation}
Since $\Phi_s$ are irreducible in $\mathbb{Q}[x]$, each $\Phi_s(X)$ with $s \mid M$ must divide at least one of $A(X)$ and $B(X)$.

Let $S_A^*$ be the set of all prime powers $p^\alpha$ such that $\Phi_{p^\alpha}(X)$ divides $A(X)$.
Consider the following conditions:

\smallskip
{ \it (T1) $|A|=A(1)=\prod_{s\in S_A^*}\Phi_s(1)$,}

\smallskip
{ \it (T2) if $s_1,\dots,s_k\in S_A^*$ are powers of distinct
primes, then $\Phi_{s_1\dots s_k}(X)$ divides $A(X)$.}

\noindent
Coven and Meyerowitz \cite{CM} proved the following theorem.

\begin{theorem}\label{thm-CM} \cite{CM} Let $A\subset\NN\cup\{0\}$ be a finite set. Then:
\begin{itemize}

\item if $A$ satisfies (T1), (T2), then $A$ tiles $\ZZ$;

\item  if $A$ tiles $\ZZ$ then (T1) holds;

\item if $A$ tiles $\ZZ$ and $|A|$ has at most two distinct prime factors,
then (T2) holds.
\end{itemize}
\end{theorem}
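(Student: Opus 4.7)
The plan is to address the three assertions of Theorem~\ref{thm-CM} separately, in increasing order of difficulty.

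For the implication ``$A$ tiles $\mathbb{Z}$ implies (T1),'' the starting point is that a tiling $A\oplus B=\mathbb{Z}_M$ translates, via \eqref{mask-e1}, to the polynomial identity $A(X)B(X)\equiv \prod_{s\mid M,\,s>1}\Phi_s(X)\pmod{X^M-1}$ together with $|A|\,|B|=M$. Evaluating at a primitive $s$-th root of unity for each $s\mid M$ with $s>1$ shows that $\Phi_s$ divides $A(X)$ or $B(X)$. For each prime $p$ with $p^\alpha\parallel M$, I would count the contributions to $|A|=A(1)$ from the cyclotomic divisors $\Phi_{p^k}$, $1\leq k\leq \alpha$, using $\Phi_{p^k}(1)=p$ and $\Phi_s(1)=1$ when $s$ is not a prime power. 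A multiplicity argument, based on the fact that every $\Phi_s$ appears as a simple factor of $X^M-1$, would yield that each such $\Phi_{p^k}$ divides \emph{exactly one} of $A(X)$, $B(X)$; combining with $|A|\,|B|=M$ then forces $|A|=\prod_{s\in S_A^*}\Phi_s(1)$, which is (T1).

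For the implication ``(T1) and (T2) imply $A$ tiles,'' the plan is to construct a complementary tile $B$ explicitly from the cyclotomic data. Pick integers $K_p\geq\max\{k:p^k\in S_A^*\}$ for each prime $p\mid |A|$, set $N=\prod_p p^{K_p}$, and define $S_B^*=\{p^k : p\mid N,\ 1\leq k\leq K_p,\ p^k\notin S_A^*\}$. I would build $B$ as a direct sum of fibers $F^N_p$ of the type defined in \eqref{N-fiber}, arranged so that $\Phi_s$ divides $B(X)$ whenever $s\mid N$, $s>1$, and every prime-power part of $s$ lies in $S_B^*$. Hypothesis (T1) is used to verify $|B|=N/|A|$, while (T2) applied to $A$ and to the constructed $B$ ensures that $\Phi_s\mid A(X)B(X)$ for every $s\mid N$ with $s>1$. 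Combined with the cardinality match, this yields $A(X)B(X)\equiv 1+X+\cdots+X^{N-1}\pmod{X^N-1}$, so $A$ tiles $\mathbb{Z}$ with period $N$.

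The third assertion is the main obstacle. Assume $A$ tiles $\mathbb{Z}$, $|A|=p^aq^b$ for distinct primes $p,q$, and let $B$ be a complementary tile modulo $M$. I would proceed by induction on $a+b$, fixing $p^k,q^\ell\in S_A^*$ and supposing for contradiction that $\Phi_{p^kq^\ell}\nmid A(X)$. Since $\Phi_{p^kq^\ell}\mid A(X)B(X)$, it must divide $B(X)$, so $B(\zeta)=0$ at every primitive $p^kq^\ell$-th root of unity $\zeta$. Invoking a Lam--Leung / R\'edei-type structure theorem for vanishing sums of roots of unity supported on only two primes would force $B$ (or, via the tiling relation, $A$) to decompose as a direct sum along a $p$- or $q$-fiber on scale $p^kq^\ell$, reducing the pair $(a,b)$ and closing the induction. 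The main obstacle, and the reason the hypothesis cannot be relaxed, is that the underlying structural theorem for vanishing sums of roots of unity is sharp only in the two-prime regime: with three or more primes, the cancellation patterns become too rich to force a fibered decomposition, which is precisely why (T2) remains open in that setting.
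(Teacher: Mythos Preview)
The paper does not prove Theorem~\ref{thm-CM}; it is quoted with attribution to \cite{CM} and used as background. There is therefore no proof in the paper to compare your proposal against. (The only fragment the paper does reprove is the ``exactly one of $A,B$'' divisibility statement for prime-power cyclotomics, recorded later as Lemma~\ref{prime-power-disjoint}, which matches the counting you outline for the second bullet.)

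As a stand-alone sketch, your treatment of the first two bullets is essentially the Coven--Meyerowitz argument and is fine. Your outline for the third bullet, however, has a genuine gap: from $\Phi_{p^kq^\ell}\mid B(X)$ and the two-prime structure theorem you obtain that $B\bmod p^kq^\ell$ is a nonnegative combination of $p$- and $q$-fibers, but this does not by itself force a direct-sum decomposition of $A$ or $B$ that lowers $(a,b)$. The actual proof in \cite{CM} is not an induction on $a+b$ of this shape; it proceeds by induction on the number of prime-power factors of $M$, using a reduction lemma (their Lemma~2.5) that passes from a tiling $A\oplus B=\mathbb{Z}_M$ to a tiling of $\mathbb{Z}_{M/p}$ whenever a prime $p$ divides every element of $B$, together with a separate argument ensuring that such a prime can always be found when $|A|$ has at most two prime factors. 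Your sketch does not supply the mechanism that produces this reduction, and the step ``force $B$ (or $A$) to decompose as a direct sum along a fiber'' is where the real work lies.
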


Any $A\subset\nno$, regardless of tiling properties, satisfies
$\prod_{s\in S_A^*}\Phi_s(1) \mid |A|$. The property (T1) follows then from an easy counting argument; the same argument also implies that if $A\oplus B=\ZZ_M$, then any prime power cyclotomic polynomial $\Phi_{p^\alpha}$ with $p^\alpha \mid M$ must divide exactly one of $A(X)$ and $B(X)$. See
Lemma \ref{prime-power-disjoint} in Section \ref{sec-lowerT2-basic}.

The second condition (T2) is much deeper and more difficult to prove.
The statement that (T2) must hold for all finite sets $A$ that tile the integers has become known in the literature as the {\it Coven-Meyerowitz conjecture}.
Beyond Theorem \ref{thm-CM}, the methods of \cite{CM} allow further mild extensions under additional assumptions on the tiling period $M$, see \cite[Corollary 6.2]{LL1}, \cite[Theorem 1.5]{M}, \cite[Proposition 4.1]{shi}, and the comments on \cite{Tao-blog}. More recently, further progress was made by {\L}aba and Londner \cite{LL1,LL2,LL-even, LL3}. The most general case where (T2) is currently known is given in \cite[Corollary 1.4]{LL3}.

One possible avenue of approach is to consider (T1) as an upper bound on the size of $A$, and ask whether a set obeying this bound may have additional cyclotomic divisors that would allow a failure of (T2) for its tiling complement. The details are as follows.

\begin{definition}\label{def-unsupported}
Let $A\subset\nn_0$, and let $\Phi_s(X)\mid A(X)$ for some $s\in\NN\setminus\{1\}$. We say that $\Phi_s$ is an {\em unsupported divisor of $A$} if:
\begin{itemize}
    \item[(i)] for every prime $p$ such that $p\mid s$, we have $p\mid |A|$,
\item[(ii)] for every prime power $p^\alpha$ such that $p^\alpha\parallel s$, we have $\Phi_{p^\alpha}\nmid A$.
\end{itemize}
\end{definition}

Let $M\in\NN$, and consider the following questions.

\begin{question}\label{Q1}
 If $A\subset\ZZ_M$ satisfies (T1), may it have unsupported divisors?
\end{question}

\begin{question}\label{Q2}
 If $A\subset\ZZ_M$ satisfies (T1) and (T2), may it have unsupported divisors?
\end{question}

\begin{question}\label{Q3}
Assume that $A\oplus B=\ZZ_M$. If (T2) holds for $A$, must $B$ also satisfy (T2)?
\end{question}

Trivially, the assumptions of Question \ref{Q2} are stronger than those of Question \ref{Q1}, hence a positive answer to the latter for some $M$ implies a positive answer to the former for the same $M$. Further, if the Coven-Meyerowitz conjecture is known to be true for some $M$ (in other words, both sets $A$ and $B$ in any tiling $A\oplus B=\ZZ_M$ must satisfy (T2)), this implies a positive answer to Question \ref{Q3} for the same $M$.

The next lemma states two less obvious relationships between the questions above and the Coven-Meyerowitz conjecture. To set the stage for it, we first note the following reduction from \cite[Lemma 2.5]{CM} (see also \cite[Lemma 6.2]{LL3}). Suppose that the Coven-Meyerowitz conjecture fails for some tiling $A\oplus B=\ZZ_M$, so that one of the sets $A$ and $B$ does not satisfy (T2). Then there exists a tiling $A'\oplus B'=\ZZ_{M'}$ for some $M'\mid M$ (obtained from the original tiling $A\oplus B=\ZZ_M$ via an explicit reduction procedure) such that (T2) also fails for one of the sets $A'$ and $B'$, and, additionally, each prime factor of $M'$ divides both $|A'|$ and $|B'|$.
We may therefore focus on tilings with this additional condition.

\begin{lemma}\label{Q123}
Assume that $A\oplus B=\ZZ_M$ for some $M\in\NN$, and that each prime factor of $M$ divides both $|A|$ and $|B|$.

\begin{itemize}

    \item[(i)] Suppose that the answer to Question \ref{Q1} is negative for this value of $M$. Then both sets $A$ and $B$ satisfy (T2).

\smallskip

\item[(ii)] Suppose that the answer to Question \ref{Q2} is negative for this value of $M$. Then, if (T2) holds for $A$, it also must hold for $B$.
\end{itemize}

\end{lemma}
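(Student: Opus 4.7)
The plan is to handle both parts by a single contrapositive argument: assuming (T2) fails for $B$, I will produce an unsupported divisor of $A$, and then invoke the respective hypothesis on Question \ref{Q1} or \ref{Q2} to get a contradiction. Part (i) also requires running the same argument with the roles of $A$ and $B$ swapped to conclude (T2) for $A$ as well.

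By Theorem \ref{thm-CM}, both $A$ and $B$ satisfy (T1). Suppose that (T2) fails for $B$: choose prime powers $q_1^{\beta_1}, \ldots, q_\ell^{\beta_\ell} \in S_B^*$ with distinct primes $q_i$ such that $\Phi_s \nmid B(X)$, where $s := q_1^{\beta_1} \cdots q_\ell^{\beta_\ell}$. My first task is to establish $s \mid M$. I would write the tiling condition as the polynomial identity
\begin{equation*}
A(X) B(X) - (1 + X + \dots + X^{M-1}) = (X^M - 1)\, Q(X), \quad Q \in \ZZ[X],
\end{equation*}
and evaluate at a primitive $q_i^{\beta_i}$-th root of unity $\zeta$. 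Since $B(\zeta) = 0$, the identity collapses to $(\zeta^M - 1)\bigl[Q(\zeta) + (\zeta - 1)^{-1}\bigr] = 0$. If $q_i^{\beta_i} \nmid M$, then $\zeta^M \neq 1$ and we may cancel to get $Q(\zeta) = -(\zeta - 1)^{-1}$; but $Q(\zeta) \in \ZZ[\zeta]$ is an algebraic integer, whereas $(\zeta - 1)^{-1}$ has norm $1/q_i$ over $\QQ$, a contradiction. Hence $q_i^{\beta_i} \mid M$ for each $i$, and so $s \mid M$.

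With $s \mid M$ in hand, evaluating the same identity at a primitive $s$-th root $\omega$ yields $A(\omega) B(\omega) = 0$ (because $\omega^M = 1$ and $\omega \neq 1$). As $\Phi_s \nmid B$ and $\Phi_s$ is irreducible over $\QQ$, we have $B(\omega) \neq 0$, whence $\Phi_s \mid A(X)$. I would then verify Definition \ref{def-unsupported}: condition (i) holds because each prime $q_i \mid s$ divides $M$ by the previous step, and the lemma's hypothesis forces $q_i \mid |A|$; condition (ii) follows from Lemma \ref{prime-power-disjoint}, which applies because $q_i^{\beta_i} \mid M$ and says $\Phi_{q_i^{\beta_i}}$ divides exactly one of $A(X), B(X)$ --- and since it divides $B$, it does not divide $A$. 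Thus $\Phi_s$ is an unsupported divisor of $A$.

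The contradictions are now immediate. In part (i), $A$ satisfies (T1) and has an unsupported divisor, contradicting the negative answer to Question \ref{Q1}; swapping the roles of $A, B$ yields (T2) for $A$ as well. In part (ii), $A$ satisfies both (T1) and (T2) by hypothesis and has an unsupported divisor, contradicting the negative answer to Question \ref{Q2}. The step I expect to be most delicate is the algebraic-integer argument upgrading $q_i \mid M$ (which follows readily from (T1) and the hypothesis) to $q_i^{\beta_i} \mid M$; this upgrade is exactly what allows Lemma \ref{prime-power-disjoint} to be applied and thereby to verify condition (ii) of Definition \ref{def-unsupported}.
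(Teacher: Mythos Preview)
Your proof is correct and follows essentially the same contrapositive route as the paper. The algebraic-integer argument for $q_i^{\beta_i}\mid M$, while valid, is unnecessary: Lemma~\ref{prime-power-disjoint} already asserts that $S_A^*\cup S_B^*=S_M^*$ as a disjoint union, so $q_i^{\beta_i}\in S_B^*$ immediately forces $q_i^{\beta_i}\mid M$ (and hence $s\mid M$), which is how the paper proceeds.
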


\begin{proof}
Since $A\oplus B=\ZZ_M$, it follows from Theorem \ref{thm-CM} that both $A$ and $B$ satisfy (T1). Suppose that one of the sets, say $B$, does not satisfy (T2). Then there exists $s\geq 2$ such that $s \mid M$ and $\Phi_s\nmid B$, but $\Phi_{p^\alpha} \mid B$ for every prime power $p^\alpha\parallel s$. By (\ref{mask-e2}), we must have $\Phi_s \mid A$. Since (as pointed out above) no $\Phi_{p^\alpha}$ may divide both $A$ and $B$, $\Phi_s$ must be an unsupported divisor of $A$. This answers Question \ref{Q1} in the negative for that value of $M$, and it further answers Question \ref{Q2} in the negative for the same value of $M$ if we assume that $A$ satisfies (T2).
\end{proof}

It turns out that the answers to Questions \ref{Q1} and \ref{Q2}, without further constraints on $M$, are positive. Our examples are as follows.

\begin{theorem}\label{thm-T1-smallset}
There exist $M\in\NN$ and a nonempty set $A\subset\ZZ_M$ such that
$A$ satisfies (T1), $M$ has two distinct prime divisors, and
$A(X)$ has at least one unsupported cyclotomic divisor.
\end{theorem}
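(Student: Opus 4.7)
The plan is to leverage the counterexamples produced in the proof of Theorem \ref{thm-smallexamples}. That theorem supplies a finite, nonempty $S \subset (\nn \setminus\{1\})$ with $\lcm(S) = p^\alpha q^\beta$ for two distinct primes $p, q$, together with a set $A \subset \nno$ achieving $|A| = \capmin(S) < \capfib(S)$ and $\Phi_s(X) \mid A(X)$ for every $s \in S$. I would first choose $s^* \in S$ of the mixed form $s^* = p^\gamma q^\delta$ with $\gamma, \delta \geq 1$; such an $s^*$ must exist in the $S$ used for Theorem \ref{thm-smallexamples}, because if $S$ consisted only of pure prime powers then, after splitting $S$ into its $p$-part and $q$-part and applying Lemma \ref{lemma-prime-power} to each, one would get $\capmin(S) = \capfib(S)$, contradicting the strict inequality.

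Setting $M = \lcm(S)$ and viewing $A$ inside $\ZZ_M$, I would then verify four properties in turn: (a) $\Phi_{s^*}(X) \mid A(X)$, which is immediate from the construction; (b) $\Phi_{p^\gamma}(X) \nmid A(X)$ and $\Phi_{q^\delta}(X) \nmid A(X)$, the key structural fact; (c) both $p$ and $q$ divide $|A|$; and (d) $A$ satisfies (T1), i.e., $|A| = \prod_{s \in S_A^*} \Phi_s(1)$. Properties (c) and (d) would be established simultaneously by computing $S_A^*$ explicitly from the construction of $A$. Because $|A|$ has only $p$ and $q$ as prime factors (the general divisibility $\prod_{s\in S_A^*}\Phi_s(1) \mid |A|$ already forces $S_A^* \subset \{p^i, q^j\}$), it suffices to check that the product of $\Phi_s(1)$ over the prime-power cyclotomic divisors of $A$ equals $|A|$ exactly, with no slack — and this will hold because any additional $\Phi_{p^i}$ or $\Phi_{q^j}$ factor would push $|A|$ beyond the minimal value $\capmin(S)$.

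The main obstacle is verifying (b). For a generic set with $\Phi_{s^*} \mid A$ there is no reason at all for the prime-power cyclotomics $\Phi_{p^\gamma}$ and $\Phi_{q^\delta}$ to be absent. What makes it possible here is precisely the non-fibered character of the minimizer: if, for instance, $\Phi_{p^\gamma}$ did divide $A$, then combined with the other cyclotomic constraints imposed by $S$, this typically forces a fibered structure at scale $s^*$ (via $F^{s^*}_p$, since the cyclotomic factors of $F^{s^*}_p$ are exactly those $\Phi_t$ with $p^\gamma \mid t \mid s^*$), contradicting $|A| < \capfib(S)$. So (b) will ultimately be read off from the explicit mask polynomial of the constructed $A$, coupled with the non-fiberedness established in Theorem \ref{thm-smallexamples}.

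The likely source of friction, beyond a clean statement of (b), is the bookkeeping to guarantee (T1) exactly rather than merely up to a factor. If the raw minimizer $A$ from Theorem \ref{thm-smallexamples} satisfies only $\prod_{s\in S_A^*}\Phi_s(1) \mid |A|$ with a nontrivial quotient, I would perform a mild surgery — for example, enlarging $M$ to a multiple of $\lcm(S)$ and taking a direct sum of $A$ with an appropriately chosen fibered set on a coprime scale — to add exactly the missing prime-power cyclotomic divisors while preserving (a), (b), and (c). After this adjustment, $\Phi_{s^*}$ satisfies both clauses of Definition \ref{def-unsupported} and is therefore an unsupported divisor, completing the proof.
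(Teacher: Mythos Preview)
Your plan---use the explicit constructions behind Theorem~\ref{thm-smallexamples} and verify (T1) plus the unsupported-divisor conditions directly---is exactly the paper's approach; the end of Section~\ref{subsec-org} notes that the examples in Section~\ref{sec-recombine-2primes} all satisfy (T1) and hence prove Theorem~\ref{thm-T1-smallset}.

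Two side remarks. First, your heuristic for (b) does not work as stated: $\Phi_{p^\gamma}\mid A$ together with $\Phi_{s^*}\mid A$ does not imply $F^{s^*}_p\mid A$ (you would also need all intermediate $\Phi_{p^\gamma q^j}$ for $0<j<\delta$), and $p$-fiberedness at the single scale $s^*$ would not give $|A|\ge\capfib(S)$ anyway. You are right that (b) must simply be read off the construction. Second, the surgery is unnecessary: in Proposition~\ref{prop:countex1}, for instance, one has $|A|=p^3q^3$ and the six prime-power divisors $\Phi_{p^{n-i}},\Phi_{q^{m-i}}$ ($i=0,1,2$) already give $\prod_{s\in S_A^*}\Phi_s(1)=p^3q^3=|A|$, so (T1) holds on the nose and no further prime-power divisors can occur. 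The same count shows $\Phi_p,\Phi_q\nmid A$, which together with $p,q\mid |A|$ makes $\Phi_{pq}$ unsupported.
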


\begin{theorem}\label{thm-T2-smallset}
There exist $M\in\NN$ and a nonempty set $A\subset \ZZ_M$ such that $A$ satisfies both (T1) and (T2),  $M$ has four distinct prime divisors, and
$A(X)$ has at least one unsupported cyclotomic divisor.
\end{theorem}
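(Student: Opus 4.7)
The plan is to exhibit an explicit finite set $A \subset \ZZ_M$, with $M = p_1^{\beta_1} p_2^{\beta_2} p_3^{\beta_3} p_4^{\beta_4}$ a product of four distinct prime powers, such that $A$ satisfies both (T1) and (T2) and $A(X)$ admits an unsupported cyclotomic divisor. The natural target is to arrange for $\Phi_{p_1 p_2 p_3 p_4}$ itself to be the unsupported divisor; by Definition~\ref{def-unsupported}, this forces $\Phi_{p_i} \nmid A(X)$ for each $i$ while requiring $p_i \mid |A|$, so one must choose exponents $\beta_i \geq 2$ and arrange $S_A^{*} = \{p_1^{\beta_1}, p_2^{\beta_2}, p_3^{\beta_3}, p_4^{\beta_4}\}$. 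Then (T1) becomes the automatic identity $|A| = p_1 p_2 p_3 p_4$, and (T2) reduces to $\Phi_d \mid A(X)$ for every $d = \prod_{i \in I} p_i^{\beta_i}$ with $\emptyset \neq I \subseteq \{1,2,3,4\}$.

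The construction would begin with the ``CRT baseline'' $A_0 = \bigoplus_{i=1}^{4} F_{p_i}^{p_i^{\beta_i}}$, a direct sum of $p_i$-fibers at scales $p_i^{\beta_i}$, whose mask polynomial is $\prod_i \Phi_{p_i^{\beta_i}}(X)$. This set realizes the targeted $|A_0|$ and $S^{*}_{A_0}$, but its mask polynomial contains no cyclotomic factor of mixed-prime order and therefore fails (T2). I would then modify $A_0$ by a sequence of ``swaps'' within cosets of appropriately chosen subgroups of $\ZZ_M$: replace selected elements of $A_0$ by other elements from the ambient cosets so that (a) the cardinality and the prime-power divisor structure are unchanged, and (b) the new mask polynomial vanishes at primitive $d$-th roots of unity for every $d$ demanded by (T2) and also at primitive $(p_1 p_2 p_3 p_4)$-th roots. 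The algebraic input is that $A(\zeta)$ for $\zeta$ a primitive $d$-th root of unity depends only on the distribution of $A$ modulo $d$, so a finite number of carefully localized swaps can be orchestrated to produce all the desired cyclotomic divisors simultaneously.

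The verifications are in principle routine: (T1) is a count; divisibility of $A(X)$ by each $\Phi_d$ (including the unsupported $\Phi_{p_1 p_2 p_3 p_4}$) reduces to a vanishing statement at roots of unity; and the non-divisibility $\Phi_{p_i} \nmid A(X)$ is $A(\zeta_{p_i}) \neq 0$ for primitive $p_i$-th roots. The main obstacle---and what makes this significantly harder than Theorem~\ref{thm-T1-smallset}---is ensuring that after all the swaps are performed, $A(X)$ remains a 0/1 polynomial and no unintended prime power cyclotomic factors appear (which would enlarge $S^{*}_A$ and break the (T1) count). Since the Coven--Meyerowitz conjecture is only known unconditionally for at most two prime factors, no general structural theorem is available for guidance, so the proof will hinge on an explicit construction with carefully chosen small primes (e.g.\ $p_i \in \{2,3,5,7\}$) and exponents, with consistency of all the cyclotomic conditions verified by direct algebraic computation, possibly with computer algebra assistance.
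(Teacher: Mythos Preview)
Your proposal correctly identifies the natural target---an unsupported divisor $\Phi_{p_1p_2p_3p_4}$ with each $\beta_i\ge 2$---but it is a plan rather than a proof: you never actually build the set, and the entire difficulty lies in the execution. Two concrete problems. First, your baseline $A_0$ with mask polynomial $\prod_i \Phi_{p_i^{\beta_i}}(X)$ already \emph{fails} (T2), so your unspecified ``swaps'' must simultaneously install all fifteen mixed divisors $\Phi_{\prod_{i\in I}p_i^{\beta_i}}$ \emph{and} the unsupported $\Phi_{p_1p_2p_3p_4}$, while keeping the mask polynomial $0/1$ and avoiding every $\Phi_{p_i}$. You give no evidence that this overdetermined system is consistent for any choice of primes. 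A far better starting point is the standard set $A^\flat(X)=\prod_i \Phi_{p_i}(X^{M/p_i})$ of (\ref{standard-set}), which already satisfies (T1) and (T2); then one only has to perturb to gain $\Phi_{p_1p_2p_3p_4}$ without losing anything. Second, with a single scale per prime and $|A|=p_1p_2p_3p_4$, there is essentially no slack: $A^\flat$ is one rigid product of short fibers, and it is unclear that any nontrivial perturbation preserves (T2). ``Computer algebra assistance'' is not a substitute for an argument that a solution exists.

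The paper's construction (Theorem~\ref{thm-T2-4primes}) is organized quite differently and explains where the needed flexibility comes from. One takes \emph{three} scales per prime, so $M=(p_1p_2p_3p_4)^4$, $|A|=(p_1p_2p_3p_4)^3$, and the standard set $U=N\ZZ_M$ (with $N=p_1p_2p_3p_4$) already satisfies (T1) and (T2). Because $U$ is built from \emph{long} fibers $F_{i,3}$ of length $p_i^3$, one can write $U=U_1\cup U_2\cup U_3\cup U_4$ with each $U_i$ a union of long $p_i$-fibers and $p_i\mid |U_i|/p_i^3$; this partition is arranged explicitly and requires arithmetic constraints on the primes (the paper takes $p_1>40$ and $p_i<p_{i+1}<2p_i$, not $\{2,3,5,7\}$). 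One then performs a Szab\'o-type shift: slide the long fibers in each $U_i$ by multiples of $M_i$ so that $A_i\bmod N$ becomes a weighted $p_i$-fiber on scale $N$, giving $\Phi_N\mid A$. The (T2) divisors survive because they are carried by the long fibers themselves, which are translated but not destroyed. None of these mechanisms---multiple scales, long fibers, the explicit four-block partition, the size conditions on the primes---appears in your outline.
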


However, the answers may be negative under additional assumptions on the tiling period or the number of scales. Our next theorem is an example of this.

\begin{theorem}\label{thm-T1T2-large}
Assume that a nonempty set $A\subset\ZZ_M$ satisfies (T1) and (T2), and that
$M$ has at most two distinct prime divisors. Then
$A(X)$ cannot have unsupported cyclotomic divisors.
\end{theorem}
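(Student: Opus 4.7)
The plan is to argue by contradiction, using that $A$ tiles $\ZZ_M$ with some complement $B$ (a tiling guaranteed in the paper's convention given (T1), (T2), and $A\subset\ZZ_M$, compare the setup of Lemma \ref{Q123}) and then appealing to (T2) for $B$. Suppose $\Phi_s$ is an unsupported cyclotomic divisor of $A(X)$. Condition (ii) of Definition \ref{def-unsupported} rules out the case that $s$ is a prime power, for then $\Phi_{p^\alpha}\nmid A$ with $p^\alpha=s$ would contradict $\Phi_s\mid A$; hence $s$ has at least two distinct prime factors, all of which divide $|A|$ by (i).

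Since $A\oplus B=\ZZ_M$ with $M=p^aq^b$, we have $|A|\cdot|B|=M$, so the prime divisors of $|A|$ lie in $\{p,q\}$. Combined with the previous step, this forces $s=p^\alpha q^\beta$ with $\alpha,\beta\geq 1$, and in particular $s\mid M$. By Lemma \ref{prime-power-disjoint} (the prime-power disjointness mentioned after Theorem \ref{thm-CM}) together with (ii), the prime-power factors $\Phi_{p^\alpha}$ and $\Phi_{q^\beta}$ -- which do not divide $A$ -- must divide $B$. By (T1) applied to $B$ (which holds by Theorem \ref{thm-CM} since $B$ is a tile), this in turn forces both $p$ and $q$ to divide $|B|$, so $|B|$ has exactly two distinct prime divisors.

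Since $|B|$ has at most two distinct primes, the third bullet of Theorem \ref{thm-CM} gives property (T2) for $B$; applying (T2) to $\Phi_{p^\alpha}, \Phi_{q^\beta}\mid B$ yields $\Phi_{p^\alpha q^\beta}=\Phi_s\mid B$. Thus $\Phi_s$ divides both $A(X)$ and $B(X)$, which must be incompatible with the tiling identity $A(X)B(X)\equiv\prod_{t\mid M,\,t>1}\Phi_t(X)\pmod{X^M-1}$ -- an identity in which each $\Phi_t$ appears with multiplicity one. The main obstacle is precisely this final divisor-exclusivity step: it is automatic for prime-power cyclotomic polynomials (Lemma \ref{prime-power-disjoint}), but for composite $s=p^\alpha q^\beta$ it requires extra input from the doubly two-prime structure $P_A=P_B=\{p,q\}$ established in the previous paragraph, plausibly via the structural result cited earlier as Lemma \ref{structure-thm}.
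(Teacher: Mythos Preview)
Your approach has a genuine gap at exactly the point you flag. The tiling identity $A(X)B(X)\equiv \prod_{t\mid M,\,t>1}\Phi_t(X)\pmod{X^M-1}$ does \emph{not} force each $\Phi_t$ to divide only one of $A(X),B(X)$: the right-hand side is a congruence, not a factorization in $\ZZ[X]$, and for composite $t$ there is no obstruction to $\Phi_t\mid A$ and $\Phi_t\mid B$ simultaneously (indeed such double divisibility does occur in Szab\'o-type tilings with three or more primes). Lemma~\ref{structure-thm} only tells you that $A\bmod s$ and $B\bmod s$ are each nonnegative combinations of $p$- and $q$-fibers on scale $s$; this is compatible with $(A*B)\bmod s$ being the constant multiset on $\ZZ_s$, and yields no contradiction. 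So the ``extra input'' you hope for does not come cheaply, and what remains to prove is essentially as hard as the theorem itself.

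There is also a smaller point to tighten: (T1) and (T2) guarantee that $A$ tiles $\ZZ$, and one must check that a tiling period dividing $M$ can be arranged (it can, via the Coven--Meyerowitz standard complement and the fact that $S_A^*\subset\{p^\alpha:p^\alpha\mid M\}$), but you should say this explicitly rather than cite Lemma~\ref{Q123}, whose hypotheses already assume a tiling of $\ZZ_M$.

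The paper takes a completely different route and never invokes a tiling complement. It proves directly (Theorem~\ref{thm:additionaldivisor}) that if $A\in\calm^+(\ZZ_{p^mq^n})$ satisfies (T2) and admits an unsupported divisor $\Phi_N$, then $|A|>\prod_{s\in S_A^*}\Phi_s(1)$, contradicting (T1). The argument uses the truncation procedure of Section~\ref{sec:structure} to reduce to a configuration with four ``blocks'' of divisors (Figure~\ref{fig:megablocks}), applies the long-fiber decomposition (Proposition~\ref{lma:longfiberslincom}) in each block, and tracks how a long $p$-fiber from one block forces weight concentration that is incompatible with the uniform distribution imposed by (T2) at the bottom block. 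This is a structural size argument intrinsic to $A$; it does not rely on any exclusivity of composite cyclotomic divisors between $A$ and a complement.
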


At this time, our constructions do not provide directly any new information on the Coven-Meyerowitz conjecture. Theorem \ref{thm-CM} is already known when $|A|$ has at most two prime factors, and the set constructed in Theorem \ref{thm-T2-smallset} does not appear to have tiling complements that do not obey (T2). Nonetheless, Lemma \ref{Q123} implies that any counterexample to the Coven-Meyerowitz conjecture would have to involve a set that satisfies (T1) but has at least one unsupported divisor.
Our examples may be viewed as a partial step in that direction. We discuss this in more detail in Section \ref{subsec-4primes-discussion}.

We end this section with a comment on Definition \ref{def-unsupported}. If the condition (i) is dropped from that definition, then
examples providing a positive answer to Questions \ref{Q1} and \ref{Q2} are much easier to construct. For instance, the set $A$ in Example \ref{ex-3primes1scale} satisfies (T1); it also satisfies (T2) trivially, since $|A|$ is a prime number. However, the additional cyclotomic divisor $\Phi_{p_1p_2}$ in that example satisfies $(p_1p_2,|A|)=1$, and it is well known (see the paragraph before Lemma \ref{Q123}) that such divisors have no relevance to the Coven-Meyerowitz conjecture.

\subsection{Organization of the paper}\label{subsec-org}

In Section \ref{sec-cyclotools}, we transfer the problem to the setting of multisets in cyclic groups. We also review the basic cyclotomic divisibility tools available in the literature, such as array coordinates, grids, fibers, and cuboids.
The fibered lower bound is discussed in detail in Section \ref{sec-fibered}, where we also provide a way to evaluate it using the {\it assignment functions} defined there.

The next few sections are devoted to new methods developed for the purpose of this paper. We start with a truncation procedure (Section \ref{sec:structure}) that will allow us, in some cases, to simplify the question by removing ``unnecessary" scales. This is also where we define the exponent sets $\capexp_i(S)$ used throughout the rest of the article. In Section \ref{multiscale-cuboid}, we set up multiscale cuboid arguments, similar to those in \cite{LL1} and \cite{LM} (and based on them, to some extent) but adapted to our needs here.  Finally, in Section \ref{sec-long-fibers} we prove a multiscale generalization of the de Bruijn-R\'edei-Schoenberg structure theorem (Proposition \ref{cuboid}) in terms of the {\it long fibers} defined there.

In Section \ref{sec-fib-bound-works}, we identify several special cases when the equality $\capmin(S)=\capfib(S)$ holds. This includes the case when $|S|\leq 3$ and $\lcm(S)$ has two distinct prime divisors (Theorem \ref{thm-3divisors-ZM}, proving Theorem \ref{thm-3divisors} stated above), as well as certain cases when $\lcm(S)$ has more than two distinct prime factors but $S$ has a particularly simple structure.

Examples with $\capmin(S)<\capfib(S)$ are presented in Section \ref{sec-fib-bound-fails}. After presenting a simple example with 3 prime factors (Example \ref{ex-3primes1scale}), in Section \ref{sec-recombine-2primes} we move on to the more difficult examples
where $|A|$ has only 2 distinct prime factors. These examples prove Theorem \ref{thm-smallexamples}, and, since they all satisfy (T1), they
also prove Theorem \ref{thm-T1-smallset}.

Next, we address the more difficult Question \ref{Q2} from Section \ref{CM-subsection}. In Section \ref{sec-lowerT2-basic}, we prove
a structure result under the (T2) assumption. In Section \ref{sec-lowerT2-diagonal}, we identify an easy case when the answer to Question \ref{Q2} is negative. We then prove
Theorem \ref{thm-T1T2-large} in Section \ref{sec-lowerT2-2primes}.

Finally, the example
in Theorem \ref{thm-T2-4primes}
proves Theorem \ref{thm-T2-smallset}, with follow-up discussion in Section \ref{subsec-4primes-discussion}.


\section{Cyclotomic divisibility tools}\label{sec-cyclotools}

\subsection{Multisets}
It will be easier to work in a cyclic group setting. Suppose that we want to prove lower bounds on the size of sets $A\subset\nno$ such that $\Phi_s(X)\mid A(X)$ for all $s$ in a fixed, finite set $S\subset(\NN\setminus\{1\})$.
Let $M=\lcm(S)$, and consider $A\bmod M$ as a multiset in $\ZZ_M$ with the mask polynomial $A(X)\bmod (X^M-1)$. For any $s \mid M$, we have $\Phi_s\mid (X^M-1)$, so that $\Phi_s \mid A$ if and only if $\Phi_s\mid (A\bmod M)$. However, $A\bmod M$ need not be a set (since two or more elements of $A$ may be congruent mod $M$), hence we need to introduce notation for multisets in $\ZZ_M$.

We use $\calm(\ZZ_M)$ to denote the set of all multisets in $\ZZ_M$ with weights in $\ZZ$ (so that both positive and negative weights are allowed). For $a\in\ZZ_M$, we write $w_A(a)$ to denote the weight of $a$ in $A$. We also define the mask polynomial of the multiset $A$ by
\begin{equation}\label{eq:weightfunctiondefn}
A(X)=\sum_{a\in\ZZ_M} w_A(a) X^a.
\end{equation}
In particular, $A\in\calm(\ZZ_M)$ is a set if and only if $w_A(x)\in\{0,1\}$ for all $x\in\ZZ_M$. In that case, the above notation is consistent with the notation used in the introduction.

We use $A+B$ to denote the ``weighted union'' of multisets, so that $(A+B)(X)=A(X)+B(X)$
and $w_{A+ B}(x)=w_A(x)+w_B(x)$.
We use convolution notation for sumsets, with $(A*B)(X)=A(X)B(X)$. If $B=\{b\}$ is a singleton, we will write $b*A=B*A$.
The {\em support} of a multiset $A\in\calm(\ZZ_M)$ is the set $\{x\in\ZZ_M:\ w_A(x)\neq 0\}$. If $A\in\calm(\ZZ_M)$ (not necessarily with positive weights), and if $Y\subset\ZZ_M$ is a set, we will use $A\cap Y$ to denote the restriction of $A$ to $Y$. Thus $A\cap Y\in\calm(\ZZ_M)$, with weights
$$
w_{A\cap Y}(x)=w_{A}(x)w_Y(x).
$$
For $A\in\calm(\ZZ_M)$, we use $|A|$ to denote the ``total mass" of $A$, defined by
$$
|A|=\sum_{a\in\ZZ_M} w_A(a).
$$

We use $\calm^+(\ZZ_M)$ to denote the set of those multisets $A\in\calm(\ZZ_M)$ whose weights are all nonnegative and whose total mass $|A|$ is positive. (The latter requirement guarantees that $A$ is not the empty set.)
Abusing the notation slightly, we will write that two multisets $A,B\in\calm^+(\ZZ_M)$ satisfy $A\subset B$ if $w_A(x)\leq w_B(x)$ for all $x\in\ZZ_M$.

Our main results will be proved for multisets in cyclic groups, but it is easy to translate them back to the integer setting. In particular,
with the above notation, we have
\begin{equation}\label{eq-minS}
\capmin(S) =\min\{|A|:\ A\in\calm^+(\ZZ_M),\ \Phi_s\mid A\hbox{ for all }s\in S\},
\end{equation}
for any $M$ such that $\lcm (S)\mid M$. Indeed, if $A\subset\nno$ is a set such that $\Phi_s\mid A$ for all $s\in S$,
then the multiset $A':=(A\bmod M)$ in $\calm^+(\ZZ_M)$ satisfies $|A'|=|A|$ and
$\Phi_s\mid A'(X)$ for all $s\in S$. Conversely, let $A'\in\calm^+(\ZZ_M)$ be a multiset such that $\Phi_s\mid A'$ for all $s\in S$. We represent $\supp(A')\subset\ZZ_M$ as a subset of $\{0,1,\dots,M-1\}$, and let
$$
A:=\bigcup_{a\in\supp(A')} \{a,a+M,\dots,a+(w_A(a)-1)M\}.
$$
Then $A\subset\nno$ is a set with $|A'|=|A|$, and since $A(X)\equiv A'(X)$ modulo $X^M-1$, divisibility by all $\Phi_s$ with $s\in S$ is preserved.

\subsection{Coordinates, grids, fibers}\label{subsec-coord}
Let $M=\prod_{i=1}^K p_i^{n_i}$ be the prime number factorization of $M$, where $p_1,\dots,p_K$ are distinct primes and $n_1,\dots,n_K\in\nn$.
By the Chinese Remainder Theorem, we may represent $\ZZ_M$ as
$$
\ZZ_M=\bigoplus_{i=1}^K \ZZ_{p_i^{n_i}}\, ,
$$
which may be viewed geometrically as a $K$-dimensional lattice. We define an explicit coordinate system on $\ZZ_M$ as follows.
Let $M_i = M/p_i^{n_i} = \prod_{j\neq i} p_j^{n_j}$. Each $x\in \ZZ_M$ may then be written uniquely as
\begin{equation}\label{array-cd}
x=\sum_{i=1}^K x_i M_i,\ \ x_i\in \ZZ_{p_i^{n_i}}.
\end{equation}

We will often need to work on many scales $N\mid M$, each scale corresponding to a different cyclotomic divisor of $A(X)$. Given $N\mid M$, any multiset $A\in \calm(\ZZ_M)$ induces a multiset $(A\bmod N)\in\calm(\ZZ_N)$, with weights
$$
w^N_A(x)=\sum_{y\in\ZZ_M, y\equiv x\bmod N}w_A(y).
$$
To simplify the notation, we will continue to denote this multiset by $A$ (instead of $A\mod N$) whenever this does not cause confusion.
The mask polynomial of $A\bmod N$ is $(A\bmod N)(X)=A(X)\bmod (X^N-1)$. For any $s\mid N$ we have $\Phi_s(X)\mid X^N-1$, so that $\Phi_s(X)\mid A(X)$ if and only if $\Phi_s(X)\mid (A\bmod N)(X)$.

 For $p_j \mid N \mid M$, a {\it $p_j$-fiber on scale $N$} is a translate of any set $F^N_j$ such that
\begin{equation}\label{def-fiber}
 F^N_j\equiv \{0, N/p_j,2N/p_j,\dots,(p_j-1)N/p_j\} \mod N
\end{equation}
 with $p_j$ indicating the {\it direction} of the fiber.
Equivalently, (\ref{def-fiber}) may be written as
$$
F^N_j(X)\equiv 1+X^{N/p_j}+\dots+X^{(p_j-1)N/p_j}\mod X^N-1.
$$
 Note that our terminology is slightly different from the convention in \cite{LL1, LL2}. We also note a slight inconsistency with the notation in (\ref{N-fiber}); however, this should not cause problems, since it will always be clear from the context whether the subscript refers to the actual prime or to its index in the list $\{p_1,\dots,p_K\}$.

 We will say that a multiset $A\in\calm(\ZZ_M)$ is {\it fibered in the $p_j$ direction on scale $N$}, or $p_j$-fibered on scale $N$ for short, if there is a polynomial $Q(X)$ with nonnegative integer coefficients such that
 \begin{equation}\label{def-fibered}
A(X) \equiv Q(X) F^N_{j} (X) \mod X^N - 1.
\end{equation}

For $D \mid N \mid M$, a {\em $D$-grid} in $\ZZ_N$ is a set of the form
$$
\Lambda^N(x,D):= x*D\ZZ_N=\{x'\in\ZZ_N:\ D \mid (x-x')\}
$$
for some $x\in\ZZ_N$.  In other words, a $D$-grid is a coset of $D\ZZ_N\simeq \ZZ_{N/D}$ in $\ZZ_N$.

When $N=M$, we will omit the superscript $M$ to simplify the notation, so that $F_j=F_j^M$ and $\Lambda(x,D)=\Lambda^M(x,D)$.

\subsection{Cuboids}
We will use the following notation from \cite{LL1}. For multisets $\Delta\in\calm(\ZZ_N)$, where $N \mid M$, we define the \textit{$\Delta$-evaluations of $A\in\calm(\ZZ_M)$ in $\ZZ_N$:}
\begin{equation}\label{delta-eval}
\bbA^N[\Delta]=\sum_{x\in\ZZ_N}w_A^N(x)w_\Delta^N(x).
\end{equation}
The following special case is of particular interest.

\begin{definition}
\label{def-N-cuboids}
Let $M$ and $N$ be as above, and let $ \mathfrak{J}=\{j\in\{1,\dots,K\}:\ p_j \mid N\}$.
An \emph{ $N$-cuboid} is a multiset $\Delta \in \calm (\zz_N)$ associated with a mask polynomial of the form
\begin{equation}\label{def-delta}
\Delta(X)= X^c\prod_{j\in\mathfrak{J}} (1-X^{d_jN/p_j})
\end{equation}
 with $(d_j,p_j)=1$ for all $j\in\mathfrak{J}$.
\end{definition}

The geometric interpretation of $N$-cuboids, where $N=\prod_{j=1}^K p_j^{\alpha_j}$, is as follows. Let
\begin{equation}\label{eq-def-PN}
\mathcal{P}(N) : = \{p  :\ p  \mid N,\ p \hbox{ is prime} \},
\end{equation}
\begin{equation}\label{eq-def-DN}
D(N):= \frac{N}{\prod_{p\in\mathcal{P}(N)}p}= \prod_{j=1}^K p_j^{\gamma_j},\ \hbox{ where }
\gamma_j=\max(0,\alpha_j-1)\hbox{ for }j=1,\dots,K.
\end{equation}
(The denominator $\prod_{p\in\mathcal{P}(N)} p$ is also known as the {\it radical} of $N$.) Then the ``vertices'' of a cuboid $\Delta$,
$$
x_{\vec{\epsilon}} : = c + \sum\limits_{j \in \mathfrak{J}} \epsilon_j d_j\frac{N}{p_j} : \vec{\epsilon} \in \{0,1\}^{\vert \mathfrak{J} \vert},
$$
form a full-dimensional rectangular box in the grid $\Lambda^N(c,D(N))$, with one vertex at $c$ and alternating $\pm 1$ weights
$$w_{\Delta} (x_{\vec{\epsilon}}) = (-1)^{\sum\limits_{j \in \mathfrak{J}} \epsilon_j}.$$

The following cyclotomic divisibility test has been known and used previously in the literature. The equivalence between (i) and (iii) is the de Bruijn-R\'edei-Schoenberg theorem on the structure of vanishing sums of roots of unity (see \cite{deB, LamLeung, Mann, Re1, Re2, schoen}). For the equivalence (i) $\Leftrightarrow$ (ii), see e.g.  \cite[Section 3]{Steinberger}, \cite[Section 3]{KMSV}.

\begin{proposition}\label{cuboid}
Let $A\in\calm(\ZZ_M)$. Then the following are equivalent:
\begin{itemize}

\item[(i)] $\Phi_N(X) \mid A(X)$,

\item[(ii)] For all $N$-cuboids $\Delta$, we have
$\bbA^N[\Delta]=0$,

\item[(iii)] $A$ mod $N$ is a linear combination of $N$-fibers, so that
$$A(X)=\sum_{i:p_i \mid N} P_i(X)F^N_i(X) \mod X^N-1,$$
where $P_i(X)$ have integer (but not necessarily nonnegative) coefficients.
\end{itemize}

\end{proposition}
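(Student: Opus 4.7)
The plan is to organize the equivalences as (iii)~$\Rightarrow$~(i)~$\Leftrightarrow$~(ii), and then close the cycle with (i)~$\Rightarrow$~(iii) by invoking the classical de Bruijn--R\'edei--Schoenberg theorem cited in the excerpt. The easy direction (iii)~$\Rightarrow$~(i) follows at once from the identity $(X^{N/p_i} - 1)\, F^N_i(X) = X^N - 1$: any primitive $N$-th root of unity $\omega$ satisfies $\omega^{N/p_i} \neq 1$ (since $\omega$ has order $N$), hence $F^N_i(\omega) = 0$ and $\Phi_N \mid F^N_i$, so any $\ZZ[X]$-linear combination of the $F^N_i$ modulo $X^N - 1$ is divisible by $\Phi_N$.

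For (i)~$\Leftrightarrow$~(ii), I would start from the orthogonality identity
\begin{equation*}
\mathbb{A}^N[\Delta] \; = \; \frac{1}{N} \sum_{\omega^N = 1} A(\omega)\, \Delta(\omega^{-1}),
\end{equation*}
which holds for any $A, \Delta \in \calm(\ZZ_N)$ by orthogonality of the additive characters of $\ZZ_N$. The next step is to analyze $\Delta(\omega^{-1}) = \omega^{-c} \prod_{j \in \mathfrak{J}} (1 - \omega^{-d_j N/p_j})$ as $\omega$ ranges over the $N$-th roots of unity. If $\omega$ has order $m$ that properly divides $N$, then some $p_j \in \mathcal{P}(N)$ satisfies $m \mid N/p_j$, and since $\gcd(d_j, p_j) = 1$ this also gives $m \mid d_j N / p_j$, so the $j$-th factor vanishes. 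If $\omega$ is a primitive $N$-th root of unity, then a short calculation using $\gcd(d_j, p_j) = 1$ shows that $\omega^{d_j N/p_j}$ has order exactly $p_j > 1$, so every factor of $\Delta(\omega^{-1})$ is nonzero. Hence only primitive roots contribute, and
\begin{equation*}
\mathbb{A}^N[\Delta] \; = \; \frac{1}{N} \sum_{\omega \text{ primitive}} A(\omega)\, \omega^{-c} \prod_{j \in \mathfrak{J}} (1 - \omega^{-d_j N/p_j}).
\end{equation*}
The implication (i)~$\Rightarrow$~(ii) is then immediate, since $\Phi_N \mid A$ forces $A(\omega) = 0$ at every primitive $\omega$. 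For the converse (ii)~$\Rightarrow$~(i), I would fix the $d_j$'s and vary $c$ over $\ZZ_N$: the hypothesis $\mathbb{A}^N[\Delta] = 0$ for every $c$ then forces, by discrete Fourier inversion in $c$, each Fourier coefficient $A(\omega) \prod_{j \in \mathfrak{J}} (1 - \omega^{-d_j N/p_j})$ to vanish for primitive $\omega$, and since the product is nonzero this yields $A(\omega) = 0$, i.e., $\Phi_N \mid A$.

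The remaining implication (i)~$\Rightarrow$~(iii) is the de Bruijn--R\'edei--Schoenberg structure theorem on vanishing sums of roots of unity: every $\ZZ$-valued such sum decomposes as a $\ZZ$-linear combination of the tautological prime-subgroup relations $1 + \zeta + \cdots + \zeta^{p-1} = 0$ at primitive $p$-th roots of unity $\zeta$, for primes $p \mid N$. Translated to the language of mask polynomials, this is exactly the assertion that any $A \in \calm(\ZZ_N)$ with $\Phi_N \mid A$ lies in the $\ZZ[X]$-span of the fibers $F^N_i$ modulo $X^N - 1$. I expect this to be the main obstacle: over $\mathbb{Q}$ the claim is routine, since in $\mathbb{Q}[X]/(X^N-1) \cong \prod_{d \mid N} \mathbb{Q}[X]/\Phi_d$ the ideal generated by the $F^N_i$ is easily seen (checking one CRT component at a time) to coincide with the kernel of projection onto the $\Phi_N$-factor, but establishing integrality of the coefficients $P_i$ is substantially more delicate and requires the inductive argument over the number of distinct prime divisors of $N$ carried out in the cited references.
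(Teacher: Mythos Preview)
Your argument is correct. The paper itself does not supply a proof of this proposition at all: it simply attributes (i)~$\Leftrightarrow$~(iii) to the de Bruijn--R\'edei--Schoenberg theorem (with references to \cite{deB, LamLeung, Mann, Re1, Re2, schoen}) and (i)~$\Leftrightarrow$~(ii) to \cite[Section~3]{Steinberger} and \cite[Section~3]{KMSV}. So you have in fact provided more than the paper does. Your Fourier-analytic treatment of (i)~$\Leftrightarrow$~(ii)---writing $\mathbb{A}^N[\Delta]$ via Plancherel, observing that $\Delta(\omega^{-1})$ vanishes at non-primitive $N$-th roots and is nonzero at primitive ones, then inverting in the parameter $c$---is precisely the standard argument one finds in those references, and your handling of (i)~$\Leftrightarrow$~(iii) (the easy direction done directly, the hard integrality direction deferred to the literature) matches how the paper treats it.
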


Proposition \ref{cuboid} can be strengthened if $N$ has only two distinct prime factors. This goes back to the work of de Bruijn \cite{deB}; a self-contained proof is provided in \cite[Theorem 3.3]{LamLeung}.

\begin{lemma}\label{structure-thm}
Let $A\in\calm^+(\ZZ_M)$.
Assume that $\Phi_{N} \mid A$, where $N$ has two distinct prime factors $p_1,p_2$. Then $A$ mod $N$ is a linear combination of $N$-fibers with nonnegative weights. In other words,
$$A(X)=P_1(X)F^N_1(X)+ P_2(X)F^N_2(X) \mod X^N-1,$$
where $P_1,P_2$ are polynomials with nonnegative coefficients.

\end{lemma}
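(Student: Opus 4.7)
The plan is to combine the cuboid characterization in Proposition \ref{cuboid} with a grid-by-grid reduction, using the nonnegativity assumption $A \in \calm^+(\ZZ_M)$ in an essential way. Set $D := D(N) = N/(p_1 p_2)$. I would partition $\ZZ_N$ into the cosets $\Lambda^N(c, D)$ and parametrize each such coset by $(\alpha, \beta) \in \ZZ_{p_1} \times \ZZ_{p_2}$ via
$$x_{\alpha, \beta} := c + \alpha \frac{N}{p_1} + \beta \frac{N}{p_2}.$$
Every $N$-cuboid is supported at four points of a single such coset, with alternating $\pm 1$ weights at the vertices. Hence the condition $\Phi_N \mid A$, rewritten via Proposition \ref{cuboid}(ii) as $\bbA^N[\Delta] = 0$ for every $N$-cuboid $\Delta$, decouples across the cosets.

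On a fixed coset, let $f(\alpha, \beta) := w_A^N(x_{\alpha, \beta})$. Unfolding the cuboid condition yields the rectangle identity
$$f(\alpha_1, \beta_1) - f(\alpha_1, \beta_2) - f(\alpha_2, \beta_1) + f(\alpha_2, \beta_2) = 0$$
for all $\alpha_1, \alpha_2 \in \ZZ_{p_1}$ and $\beta_1, \beta_2 \in \ZZ_{p_2}$ (nontrivially when $\alpha_1 \neq \alpha_2$ and $\beta_1 \neq \beta_2$, trivially otherwise). Fixing arbitrary references $\alpha_0, \beta_0$, this forces the additive form $f(\alpha, \beta) = g(\alpha) + h(\beta)$, where $g(\alpha) := f(\alpha, \beta_0)$ and $h(\beta) := f(\alpha_0, \beta) - f(\alpha_0, \beta_0)$ are integer-valued but a priori of arbitrary sign.

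The decisive step, and the main obstacle, is upgrading this decomposition to one with \emph{nonnegative integer} coefficients, since this is what distinguishes the two-prime setting from the three-or-more-prime case. The pointwise inequality $g(\alpha) + h(\beta) = f(\alpha, \beta) \geq 0$ holds for every $\alpha, \beta$; setting $h_{\min} := \min_{\beta} h(\beta) \in \ZZ$ and replacing $(g, h)$ by $(\tilde g, \tilde h) := (g + h_{\min}, h - h_{\min})$ yields two nonnegative integer-valued functions that still sum to $f$ (nonnegativity of $\tilde g$ follows from $g(\alpha) \geq -h(\beta^*) = -h_{\min}$, where $\beta^*$ attains the minimum of $h$). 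Since $\{x_{\alpha_0, \beta} : \beta \in \ZZ_{p_2}\}$ is a translated $p_2$-fiber and $\{x_{\alpha, \beta_0} : \alpha \in \ZZ_{p_1}\}$ is a translated $p_1$-fiber, the decomposition $f = \tilde g + \tilde h$ exhibits $A \cap \Lambda^N(c, D)$ as a nonnegative integer combination of $N$-fibers in the $p_1$ and $p_2$ directions. Assembling these across all cosets delivers the desired expression
$$A(X) \equiv P_1(X) F^N_1(X) + P_2(X) F^N_2(X) \pmod{X^N - 1}$$
with $P_1, P_2 \in \ZZ_{\geq 0}[X]$. The nonnegativity adjustment via a single constant $h_{\min}$ is essentially one-dimensional and has no analogue for three or more primes, which is consistent with the counterexamples constructed later in the paper.
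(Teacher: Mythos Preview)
Your proof is correct. The paper itself does not prove Lemma~\ref{structure-thm}; it cites de Bruijn \cite{deB} and \cite[Theorem 3.3]{LamLeung} for a self-contained argument. Your grid-by-grid reduction to the additive form $f(\alpha,\beta)=g(\alpha)+h(\beta)$, followed by the constant shift $h_{\min}$ to enforce nonnegativity, is exactly the mechanism the paper later uses when proving the long-fiber generalization (Proposition~\ref{lma:longfiberslincom}, the $K=2$ case, attributed there to \cite[Proposition 3.8(b)]{KMSV}). So your argument is in the spirit of the paper, just supplied at an earlier point than the authors chose to.
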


It is well known that the positivity in Lemma \ref{structure-thm} does not hold when $N$ has 3 or more distinct prime factors. There are many examples of this in the literature, see e.g., the ``minimal relations" listed by Poonen and Rubinstein \cite[Table 1]{PR}, or the unfibered structures in \cite[Sections 5 and 6]{LL2} in the case when $N$ has 3 prime factors.

The following consequence of Proposition \ref{cuboid} will be used often.

\begin{lemma}\label{grid-split}
Assume that $N \mid M$ and $A\in\calm(\ZZ_M)$.
Then $\Phi_N \mid A$ if and only if $\Phi_N \mid (A\cap\Lambda)$ for every $D(N)$-grid $\Lambda$ in $\ZZ_M$.
\end{lemma}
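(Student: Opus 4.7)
The plan is to deduce the lemma from Proposition \ref{cuboid} together with the observation that every $N$-cuboid is supported on a single $D(N)$-grid in $\ZZ_N$, and that $D(N)$-grids in $\ZZ_M$ correspond bijectively to $D(N)$-grids in $\ZZ_N$ under reduction mod $N$.

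First, let $D=D(N)$ and let $\pi:\ZZ_M\to\ZZ_N$ denote reduction mod $N$. Since $D\mid N\mid M$, a short computation shows that $\pi^{-1}(D\ZZ_N)=D\ZZ_M$, so $\pi$ induces a bijection between the $D$-grids of $\ZZ_M$ and the $D$-grids of $\ZZ_N$: if $\Lambda=x_0*D\ZZ_M$ then $\pi(\Lambda)=(x_0\bmod N)*D\ZZ_N$, and $\pi^{-1}(\pi(\Lambda))=\Lambda$. Consequently, for any $x\in\pi(\Lambda)$ we have
\[
w_A^N(x)=\sum_{y\in\pi^{-1}(x)} w_A(y)=\sum_{y\in\pi^{-1}(x)} w_{A\cap\Lambda}(y)=w_{A\cap\Lambda}^N(x),
\]
while $w_{A\cap\Lambda}^N(x)=0$ for $x\notin\pi(\Lambda)$. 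Next, inspecting Definition \ref{def-N-cuboids}, every vertex of an $N$-cuboid $\Delta$ lies in the grid $\Lambda^N(c,D)$, so $\Delta$ is supported on a single $D$-grid of $\ZZ_N$.

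Combining these two observations with the definition (\ref{delta-eval}), we obtain the key identity: for every $N$-cuboid $\Delta$ and every $D$-grid $\Lambda$ in $\ZZ_M$,
\[
(A\cap\Lambda)^N[\Delta]=
\begin{cases}
\bbA^N[\Delta] & \text{if } \supp(\Delta)\subset\pi(\Lambda),\\
0 & \text{otherwise.}
\end{cases}
\]

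For the forward implication, assume $\Phi_N\mid A$. By Proposition \ref{cuboid}(ii), $\bbA^N[\Delta]=0$ for every $N$-cuboid $\Delta$. Fix a $D$-grid $\Lambda\subset\ZZ_M$. For any $N$-cuboid $\Delta$, the identity above shows that $(A\cap\Lambda)^N[\Delta]$ is either $\bbA^N[\Delta]=0$ or trivially $0$. Another application of Proposition \ref{cuboid}(ii) yields $\Phi_N\mid(A\cap\Lambda)$. For the converse, partition $A$ over the $D$-grids in $\ZZ_M$ to write $A(X)=\sum_{\Lambda} (A\cap\Lambda)(X)$; if each summand is divisible by $\Phi_N$, so is $A(X)$. (Equivalently, summing the identity above over $\Lambda$ gives $\bbA^N[\Delta]=\sum_\Lambda (A\cap\Lambda)^N[\Delta]=0$ for every $N$-cuboid $\Delta$.)

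The only real content is the bookkeeping in the first paragraph; once one has the bijection of grids and the support property of $N$-cuboids, the statement falls out immediately from Proposition \ref{cuboid}, so no serious obstacle is expected.
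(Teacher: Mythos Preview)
Your proof is correct and follows essentially the same approach as the paper's own proof: reduce $A$ modulo $N$ and invoke the equivalence (i)\,$\Leftrightarrow$\,(ii) in Proposition~\ref{cuboid}, using that each $N$-cuboid is supported on a single $D(N)$-grid. The paper compresses this into a single sentence, while you spell out the bijection between $D(N)$-grids in $\ZZ_M$ and in $\ZZ_N$ and the resulting identity for $\Delta$-evaluations; the added detail is accurate and adds no genuinely new ideas.
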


\begin{proof}
This follows by replacing $A\in\calm(\ZZ_M)$ by $(A\bmod N)\in\calm(\ZZ_N)$ as described above, then applying
the equivalence (i) $\Leftrightarrow$ (ii) in Proposition \ref{cuboid}.
\end{proof}



\section{Lower bound for fibered sets}\label{sec-fibered}

Let $M = \prod_{k=1}^{K} p_k^{n_k}$,  where $p_1,\dots, p_K$ are distinct primes and $n_1,\dots,n_K \in\NN$.
For $s\in\NN$, we will use the notation
\begin{align*}
\mathcal{D}(s) &: = \{d \in \mathbb{N} : \ d \mid s,\ d\neq 1 \}.
\end{align*}

Let $S \subset \mathcal{D}(M)$ be non-empty, and let $\capmin(S)$ be given by (\ref{eq-minS}).
Recall also that in Section \ref{subsec-fibered} we defined $\capfib(S)$ to be the minimal size of a nonempty set $A\subset\nno$ such that $A$ is fibered in some direction on each scale $s\in S$.
By the same argument as in the proof of (\ref{eq-minS}), we may consider multisets $A\in \calm^+(\ZZ_M)$ instead of sets $A\subset\nno$.
We now indicate how to evaluate $\capfib(S)$.

\begin{definition}
Let $S \subset \mathcal{D}(M)$.
 An \textit{assignment function} is any function $\sigma : S \rightarrow \{1,\ldots,K\}$ such that $$\sigma (s) \in \{i : p_i \mid s \}.$$
Given $A \in \calm^+(\mathbb{Z}_M)$ and an assignment function $\sigma$, we say that $A$ is $(S , \sigma)$-\emph{fibered} if, for every $s \in S$, the associated multiset $A$ mod $s$ is fibered in the $p_{\sigma(s)}$ direction on the scale $s$.
\end{definition}

\begin{proposition}\label{prop-multifibered}
Let $S\subset\calD(M)$, and let $\sigma : S \rightarrow \{1,\ldots,K\}$ be an assignment function.  For each $i$,  let
$$
\capexp_i(S, \sigma) : = \{ \alpha\in\nn : \exists \, s \in S \textrm{ with } (s, p_i^{n_i}) = p_i^{\alpha} \textrm{ and } \sigma (s) = i \}.
$$
(We emphasize that the exponent 0 is not included above.)
Let $E_i(S,\sigma) : = \# \capexp_i(S, \sigma )$, and
$$
\capfib (S,\sigma) : = p_1^{E_1(S,\sigma)} \cdots p_K^{E_K(S,\sigma)}.
$$
In the special case when $\sigma(s)\equiv i$ for all $s\in S$, we will write $\capfib(S,\sigma)=\capfib(S,i)$.

Then
\begin{equation}\label{best-we-can-hope-for}
\capfib(S)= \min_{\sigma} \capfib (S,\sigma),
\end{equation}
with the minimum taken over all assignment functions $\sigma$. In particular, we have $\capmin(S)\leq \min_{\sigma} \capfib (S,\sigma)$.
\end{proposition}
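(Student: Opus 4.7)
The plan is to prove, for each fixed assignment function $\sigma$, that the minimal cardinality of any $(S,\sigma)$-fibered multiset $A\in\calm^+(\ZZ_M)$ equals $p_1^{E_1(S,\sigma)}\cdots p_K^{E_K(S,\sigma)}$. Since every multiset which is fibered on each scale $s\in S$ is $(S,\sigma)$-fibered for at least one choice of $\sigma$, swapping the order of the two minima then yields
$$
\capfib(S)=\min_A\min_\sigma |A|=\min_\sigma\min_A|A|=\min_\sigma\capfib(S,\sigma).
$$
The ``in particular'' assertion follows at once because any $(S,\sigma)$-fibered $A$ satisfies $\Phi_s\mid A(X)$ for every $s\in S$.

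For the lower bound, suppose $A$ is $(S,\sigma)$-fibered. For each $s\in S$ with $\sigma(s)=i$ and $p_i^\alpha\parallel s$, the hypothesis $F^s_{p_i}\mid A$, combined with the factorization $F^s_{p_i}(X)=\prod_{t:\,p_i^\alpha\mid t\mid s}\Phi_t(X)$, forces $\Phi_{p_i^\alpha}\mid A$. Ranging over all $i$ and all $\alpha\in\capexp_i(S,\sigma)$ produces pairwise distinct prime-power cyclotomic factors of $A(X)$, so their product divides $A(X)$; evaluating at $X=1$ with $\Phi_{p_i^\alpha}(1)=p_i$ yields $\prod_i p_i^{E_i(S,\sigma)}\mid |A|$, which is the lower bound.

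For the matching upper bound, I would construct an explicit $(S,\sigma)$-fibered set of cardinality $\prod_i p_i^{E_i(S,\sigma)}$. With $M_i:=M/p_i^{n_i}$, set
$$
A_0(X):=\prod_{i=1}^K\prod_{\alpha\in\capexp_i(S,\sigma)} F^{p_i^\alpha M_i}_{p_i}(X).
$$
The support of $A_0$ consists of all sums $\sum_{i,\alpha} c_{i,\alpha}\, p_i^{\alpha-1} M_i$ with $c_{i,\alpha}\in\{0,\dots,p_i-1\}$; a Chinese Remainder Theorem argument (reducing modulo each $p_j^{n_j}$, using that $M_j$ is a unit there and that each tuple $(c_{j,\alpha})_{\alpha\in\capexp_j}$ recovers unique base-$p_j$ digits at positions $\alpha-1<n_j$) shows these sums are pairwise distinct in $\ZZ_M$, so $A_0$ is a set of cardinality $\prod_i p_i^{E_i(S,\sigma)}$. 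To verify the fibering on a scale $s\in S$, let $i=\sigma(s)$ and write $s=p_i^\alpha m$ with $\gcd(m,p_i)=1$; then $\alpha\in\capexp_i(S,\sigma)$ and $r:=M_i/m$ is coprime to $p_i$. The exponents of the factor $F^{p_i^\alpha M_i}_{p_i}(X)$ are $k\,p_i^{\alpha-1}M_i=kr\,(s/p_i)$ for $k=0,\dots,p_i-1$; since multiplication by $r$ permutes $\ZZ_{p_i}$, these exponents reduce modulo $s$ bijectively onto $\{0,s/p_i,\dots,(p_i-1)s/p_i\}$, yielding $F^{p_i^\alpha M_i}_{p_i}(X)\equiv F^s_{p_i}(X)\bmod(X^s-1)$. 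Factoring this fiber out gives $A_0(X)\equiv F^s_{p_i}(X)\cdot R(X)\bmod(X^s-1)$, where $R(X)$ is the product of the remaining fibers and thus has nonnegative integer coefficients, which is precisely the $(S,\sigma)$-fibering condition on scale $s$.

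The main obstacle is pinning down the correct \emph{scale} for each fiber in the construction. The natural-looking choice $A_0(X)=\prod_{i,\alpha}\Phi_{p_i^\alpha}(X)$ meets the lower bound's cyclotomic divisibility but fails the fibering condition on any scale $s\in S$ having prime factors other than $p_{\sigma(s)}$, since $\Phi_{p_i^\alpha}(X)\not\equiv F^s_{p_i}(X)\bmod(X^s-1)$ in general. Widening the scale of each constituent fiber from $p_i^\alpha$ to $p_i^\alpha M_i$ is what enables the coprimality-based reduction to match $F^s_{p_i}(X)$ uniformly across every relevant $s$, and identifying this scale is the crux of the argument.
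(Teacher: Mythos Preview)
Your proof is correct and follows essentially the same approach as the paper: the lower bound via $\Phi_{p_i^\alpha}\mid A$ and evaluation at $X=1$ is identical, and your constructed set $A_0$ coincides with the paper's standard set $A^\flat$ (since $F^{p_i^\alpha M_i}_{p_i}(X)=\Phi_{p_i}(X^{M_i p_i^{\alpha-1}})$). The only minor difference is in verifying the fibering on scale $s$: the paper shows the polynomial divisibility $F^s_i(X)\mid \Phi_{p_i}(X^{M_i p_i^{\alpha-1}})$ via their cyclotomic factorizations, whereas you establish the congruence $F^{p_i^\alpha M_i}_{p_i}(X)\equiv F^s_{p_i}(X)\bmod(X^s-1)$ by tracking exponents directly; you also verify distinctness of the elements of $A_0$ explicitly, which the paper only asserts.
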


\begin{proof}
We first prove that $\capfib(S) \geq \min_{\sigma} \capfib (S,\sigma)$. Indeed, suppose that $A \in \calm^+(\mathbb{Z}_M)$ is fibered on each scale $s\in S$. Then for each $s\in S$ there exists a prime $p_{i(s)} \mid s$ such that $A$ is fibered in the $p_{i(s)}$ direction on scale $s$. This defines an assignment function via $\sigma(s)=i(s)$ such that $A$ is $(S , \sigma)$-fibered. We fix this $\sigma$, and write $E_i:= E_i(S,\sigma)$ for short.

Next, we claim that
\begin{equation}\label{sigma-e21}
\hbox{ if }A\hbox{ is }(S,\sigma)\hbox{-fibered, then }\capfib(S,\sigma)\mid |A|.
\end{equation}
In particular, $
|A|\geq\capfib(S,\sigma)$ as required. To prove (\ref{sigma-e21}), it suffices to prove that $p_i^{E_i}\mid |A|$ for each $i\in\{1,\dots,K\}$. Fix such $i$, assume that
$$
\capexp_i(S,\sigma)=\{\alpha_1,\dots,\alpha_{E_i}\}
$$
(if this is an empty set, then $E_i=0$ and there is nothing to prove),
and let $s_1,\dots,s_{E_i}$ be elements of $S$ such that $\sigma(s_j)=i$ and $p_i^{\alpha_j}\parallel s_j$.
Since $A$ is $p_i$-fibered on each scale $s_j$, we have
$$
F_i^{s_j}(X)\mid A(X) \mod (X^{s_j}-1),
$$
where $F^{s_j}_i(X)=(X^{s_j}-1)/(X^{s_j/p_i}-1)$. Since $p_i^{\alpha_j}$ divides $s_j$ but not $s_j/p_i$, it follows that $\Phi_{p_i^{\alpha_j}} \mid A$. Therefore
$$
p_i^{E_i}=\Phi_{p_i^{\alpha_1}}(1)\dots \Phi_{p_i^{\alpha_{E_i}}}(1)\mid A(1)=|A|,
$$
proving (\ref{sigma-e21}).


For the converse inequality, given an assignment function $\sigma$, we
give an explicit ``standard" $(S,\sigma)$-fibered set $A^\flat=A_{S,\sigma}^\flat$ such that $|A^\flat|=\capfib(S,\sigma)$ and $\Phi_s \mid A$ for all $s\in S$. The construction follows \cite{CM} and was also used in \cite{LL1,LL2} in the context of integer tilings.
Let $M_i:=M/p_i^{n_i}$, and define
\begin{equation}\label{standard-set}
\begin{split}
A^\flat(X) &= \prod_{i=1}^{K} \left[ \prod_{\beta\in \textsf{EXP}_{i} (S, \sigma)}  \Phi_{p_i}\left(X^{M_ip_i^{\beta-1}}
\right)\right]
\\
&=  \prod_{i=1}^{K} \left[ \prod_{\beta\in\textsf{EXP}_{i} (S, \sigma)}
\left(1+ X^{M_ip_i^{\beta-1}} + \dots + X^{(p_i-1)M_ip_i^{\beta-1}} \right)\right] .
\end{split}
\end{equation}
Since $\Phi_p(1)=p$ for prime $p$, we have
$$
|A^\flat|=A^\flat(1)= \prod_{i=1}^{K} \left[ \prod_{\beta\in\textsf{EXP}_{i} (S, \sigma)}  p_i \right] = \capfib (S,\sigma).
$$
Next, let $s\in S$, and let $i=\sigma(s)$ so that $p_i^{\beta}\parallel s$ for some $\beta\in \textsf{EXP}_{i} (S, \sigma)$.
Observe that
\begin{equation}\label{cyclo-divisors}
\Phi_{p_i}\big(X^{M_ip_i^{\beta-1}}\big)= \frac{1-X^{M_ip_i^{\beta}}}{1-X^{M_ip_i^{\beta-1}}}
= \prod_{u \mid M_ip_i^{\beta}, u\nmid M_ip_i^{\beta-1}}\Phi_u(X),
\end{equation}
\begin{equation}\label{cyclo-divisors2}
F^s_i(X)= \frac{1-X^{s}}{1-X^{s/p_i}}
= \prod_{u \mid s, u\nmid\, (s/p_i)}\Phi_u(X).
\end{equation}
Hence, we have the chain of divisibility
$$
\Phi_s(X) ~ \Big| ~F^s_i(X)\ \Big| \ \Phi_{p_i}\left(X^{M_ip_i^{\beta-1}}\right) \ \Big| \ A^\flat(X),
$$
proving both $\Phi_s\mid A^\flat$ and the fibering claim.
\end{proof}


\section{A truncation procedure}\label{sec:structure}

We introduce a truncation procedure that will allow us to reduce proving upper or lower bounds on $\capmin(S)$ to proving similar bounds with $S$ replaced by a simpler set.
Let $M = \prod_{k=1}^{K} p_k^{n_k}$,  where $p_1,\dots, p_K$ are distinct primes and $n_1,\dots,n_K \in\NN$.
We continue to use the coordinate representation
$$
\mathbb{Z}_M \ni x \equiv x_1 M_1 + \cdots + x_K M_K \mod M,
$$
where $M_i = M/p_i^{n_i}$ and $x_i \in \mathbb{Z}_{p_i^{n_i}}$.  We will also need the  $p_i$-adic expansion of $x_i$:
\begin{equation}\label{padic-digits}
x_i \equiv x_{i,0} + x_{i,1} p_i + \cdots + x_{i,n_i - 1} p_i^{n_i - 1} \mod p_i^{n_i}, \ x_{i,j}\in\{0,1,\dots,p_i-1\},
\end{equation}

For $S\subset\cald(M)$ and $1 \leq i \leq K$, we define
\begin{equation}\label{eq-defexp}
\textsf{EXP}_i(S) := \{\alpha \geq 1 :\  \exists \, s \in S \textrm{ with } p_i^{\alpha} \mid \mid s \},
\ \
E_i := \# \textsf{EXP}_i(S).
\end{equation}
It will be useful to arrange the sets $\textsf{EXP}_i(S)$ in increasing order:
\begin{equation}\label{e-trunk5}
\textsf{EXP}_i(S) : = \{\alpha_{i,1}, \cdots, \alpha_{i,E_i}\},\ \ 1\leq \alpha_{i,1}<\dots<\alpha_{i,E_i}.
\end{equation}
Let us use the convention $\alpha_{i,0}=0$.
We then have the following proposition.

\begin{proposition}\label{prop:truncation} {\bf (Truncations)}
Let $S\subset\cald(M)$, and let $A \in \calm (\mathbb{Z}_M)$ satisfy $\Phi_s \mid A$ for all $s\in S$.
Define $M' : = p_1^{E_1} \cdots p_K^{E_K}$. Then, there exists a multiset $A' \in \calm (\mathbb{Z}_{M'})$ satisfying
\begin{itemize}
\item[(i)] $A'(1) = A(1)$,

\medskip

\item[(ii)] For every $N =  p_1^{\alpha_{1,\ell_1}} \cdots p_K^{\alpha_{K,\ell_K}}$ with $\Phi_N(x) \mid A(x)$,  we have $\Phi_{N'} (X) \mid A'(X)$, where $N' : = p_1^{\ell_1} \cdots p_K^{\ell_K} \mid M'$.
\end{itemize}
Furthermore, if $A\in\calm^+(\ZZ_M)$, then $A'\in\calm^+(\ZZ_{M'})$.
\end{proposition}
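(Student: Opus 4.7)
The plan is to build $A'$ as the pushforward of $A$ under an explicit digit-selection map $\phi:\ZZ_M\to\ZZ_{M'}$, and verify the cyclotomic divisibility via the cuboid test in Proposition~\ref{cuboid}. In CRT coordinates $x=\sum_i x_iM_i$ with $p_i$-adic digits $x_i=\sum_{j=0}^{n_i-1}x_{i,j}p_i^j$, set
$$
\phi(x)\ :=\ \sum_{i=1}^K M'_i \sum_{\ell=1}^{E_i}x_{i,\alpha_{i,\ell}-1}\,p_i^{\ell-1}\in\ZZ_{M'},
$$
so that $\phi$ retains only the digits at positions indexed by $\textsf{EXP}_i(S)$ and relabels them as consecutive positions $0,\dots,E_i-1$ inside $\ZZ_{p_i^{E_i}}$. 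Define $w_{A'}(y):=\sum_{x:\,\phi(x)=y}w_A(x)$. The pushforward preserves total mass, so $A'(1)=A(1)$, giving (i); nonnegativity is visibly preserved, so $A\in\calm^+(\ZZ_M)$ forces $A'\in\calm^+(\ZZ_{M'})$.

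For (ii), fix $N=\prod_i p_i^{\alpha_{i,\ell_i}}\in S$ and $N'=\prod_i p_i^{\ell_i}$; by Proposition~\ref{cuboid}, it suffices to prove $\bbA'^{N'}[\Delta']=0$ for every $N'$-cuboid $\Delta'$. A direct digit check establishes the commuting square $\pi_{N'}\circ\phi=\psi\circ\pi_N$, where $\psi:\ZZ_N\to\ZZ_{N'}$ is the analogous coordinate-wise selection $\psi_i(\tilde x_i)=\sum_{\ell=1}^{\ell_i}\tilde x_{i,\alpha_{i,\ell}-1}p_i^{\ell-1}$. Unfolding the definitions, this identity yields
$$
\bbA'^{N'}[\Delta']\;=\;\bbA^N[\psi^*\Delta'],\qquad w_{\psi^*\Delta'}(\tilde x):=w_{\Delta'}(\psi(\tilde x)).
$$

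The main step is to decompose $\psi^*\Delta'$ as a sum of honest $N$-cuboids. Write $\Delta'(X)=X^{c'}\prod_{i\in\mathfrak{J}'}(1-X^{d'_iN'/p_i})$ with $\mathfrak{J}'=\{i:\ell_i\geq 1\}$ and $\gcd(d'_i,p_i)=1$. The $\psi$-preimage of any vertex of $\Delta'$ consists of those $\tilde x\in\ZZ_N$ whose digits at the ``retained'' positions $\{\alpha_{i,\ell}-1:1\le\ell\le\ell_i\}$ match those of the vertex, with arbitrary values on the ``free'' positions $F_i:=\{0,\dots,\alpha_{i,\ell_i}-1\}\setminus\{\alpha_{i,\ell}-1:1\le\ell\le\ell_i\}$. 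For each choice $\mathbf{z}=(z_{i,j})_{i\in\mathfrak{J}',\,j\in F_i}$ of free-digit values, the $2^{|\mathfrak{J}'|}$ selected preimages assemble into a genuine $N$-cuboid
$$
\Delta_{\mathbf{z}}(X)\ =\ X^{\tilde c(\mathbf{z})}\prod_{i\in\mathfrak{J}'}\bigl(1-X^{d'_iN/p_i}\bigr),
$$
since adding $d'_ip_i^{\alpha_{i,\ell_i}-1}$ in $\ZZ_{p_i^{\alpha_{i,\ell_i}}}$ changes only the top retained digit (any carry wraps modulo $p_i^{\alpha_{i,\ell_i}}$) and leaves the free digits untouched, so the vertex signs $(-1)^{\sum_i\epsilon_i}$ are inherited correctly from $\Delta'$ and $\gcd(d'_i,p_i)=1$ is preserved. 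Thus $\psi^*\Delta'=\sum_{\mathbf{z}}\Delta_{\mathbf{z}}$, and applying $\Phi_N\mid A$ together with Proposition~\ref{cuboid} to each $\Delta_{\mathbf z}$ gives $\bbA^N[\Delta_{\mathbf{z}}]=0$, hence $\bbA'^{N'}[\Delta']=0$ and $\Phi_{N'}\mid A'$.

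The main obstacle I anticipate is the digit-level bookkeeping in the previous paragraph: verifying that the vertex-preimage structure partitions cleanly over free-digit configurations into valid $N$-cuboids with correct signs and coprimality. This is a direct but careful $p_i$-adic computation coordinate by coordinate; once it is in place, everything else reduces to two applications of the cuboid criterion.
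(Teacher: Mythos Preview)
Your strategy---define $A'=\phi_*A$ by digit selection and verify $\Phi_{N'}\mid A'$ via the cuboid criterion---is a valid alternative to the paper's fiber-based argument, but there is a genuine gap at the commuting square. The claim $\pi_{N'}\circ\phi=\psi\circ\pi_N$, with $\psi$ the analogous digit selection in the array coordinates of $\ZZ_N$, fails for $K\ge2$. Reduction mod $N$ does not respect array coordinates in the naive way: if $x=\sum_i x_iM_i$ in $\ZZ_M$ and $x\bmod N=\sum_i\tilde x_iN_i$ in $\ZZ_N$, then $\tilde x_i\equiv (N_i^{-1}M_i)\,x_i\pmod{p_i^{\alpha_{i,\ell_i}}}$, and the unit $N_i^{-1}M_i$ scrambles the $p_i$-adic digits of $x_i$. (Already with $M=36$, $N=M'=N'=6$ one has $\psi=\mathrm{id}$, so the square would force $\phi=\pi_6$; but for $x=4$ one computes $x_1=0$, $x_2=1$, hence $\phi(4)=2\neq 4=\pi_6(4)$.) So your identity $\bbA'^{N'}[\Delta']=\bbA^N[\psi^*\Delta']$ is not correct, and the ``free-digit'' parametrization of preimages via $\psi$ is not the right object.

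The approach can be repaired. The map $\pi_{N'}\circ\phi$ does factor through $\pi_N$ (your verification that $x\equiv x'\pmod N$ forces $\phi(x)\equiv\phi(x')\pmod{N'}$ is fine), giving an induced $h:\ZZ_N\to\ZZ_{N'}$ with $\bbA'^{N'}[\Delta']=\bbA^N[h^*\Delta']$. One can then check that $h(\tilde x+dN/p_i)-h(\tilde x)=e_i(d)\,N'/p_i$ for a unit $e_i(d)\in(\ZZ/p_i\ZZ)^\times$ depending only on $d$ and $i$, not on $\tilde x$; this uses that the unit twists and the carries at the top retained digit both disappear modulo $p_i^{\ell_i}$. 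From this affine-like property the decomposition of $h^*\Delta'$ into $N$-cuboids follows as you outlined. But this is precisely the delicate bookkeeping you flagged, and it is genuinely more than a ``direct digit check.''

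The paper's proof sidesteps the issue by never reducing mod $N$ until the end. It stays in $\ZZ_M$, uses the fiber criterion (Proposition~\ref{cuboid}(iii)) rather than cuboids, and proves (Lemma~\ref{lma:truncationfiber}) that each single-digit-zeroing map $\mathbf T_i^\alpha$ with $\alpha\notin\capexp_i(S)$ sends $N$-fibers to $N$-fibers. Composing these gives $\bbT_S$, and a final relabeling bijection $\mathbb U:\bbT_S(\ZZ_M)\to\ZZ_{M'}$ (which is your $\phi$ restricted to the image of $\bbT_S$) visibly sends $N$-fibers to $N'$-fibers. Working with fibers in $\ZZ_M$ throughout avoids the unit-twist in $\pi_N$ altogether.
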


Proposition \ref{prop:truncation} allows us to assume that $\textsf{EXP}_i(S) = \{1,2,\ldots,E_i\}$ for every $i \in \{1,\ldots,K\}$, so that there are no gaps in our set of exponents. We refer to the multiset $A' \in \calm (\mathbb{Z}_{M'})$ in Proposition \ref{prop:truncation} as the \textit{truncation of} $A$ \textit{relative to} $S$.

\begin{example}
\rm{
Suppose that $\Phi_s \mid A$ for all $s\in S$, where
$$S : = \{p_2^2,  p_1^3, p_2^4, p_1^3p_2^4, p_1^{10}, p_2^{10}, p_1^{10} p_2^{10}\}.$$
Then,  $\textsf{EXP}_1(S)= \{3, 10\}$ and $\textsf{EXP}_2(S) = \{ 2,4,10\}$ so that $M' := p_1^{E_1} p_2^{E_2} = p_1^2 p_2^3$. Proposition \ref{prop:truncation} then furnishes a multiset $A' \in \calm (\mathbb{Z}_{p_1^2 p_2^3})$ such that $A'(1) = A(1)$ and $\Phi_s \mid A'$ for all $s\in S':=  \{p_1, p_2, p_1^2, p_2^2, p_1 p_2^2, p_2^3, p_1^2 p_2^3\}$. The exponent sets associated to $A'$ are $\{1,2\}$ and $\{1,2,3\}$, with no gaps.
}
\end{example}

\begin{figure*}[h!]
    \centering
    \begin{subfigure}[t]{0.5\textwidth}
        \centering
        \includegraphics[width=.7\textwidth]{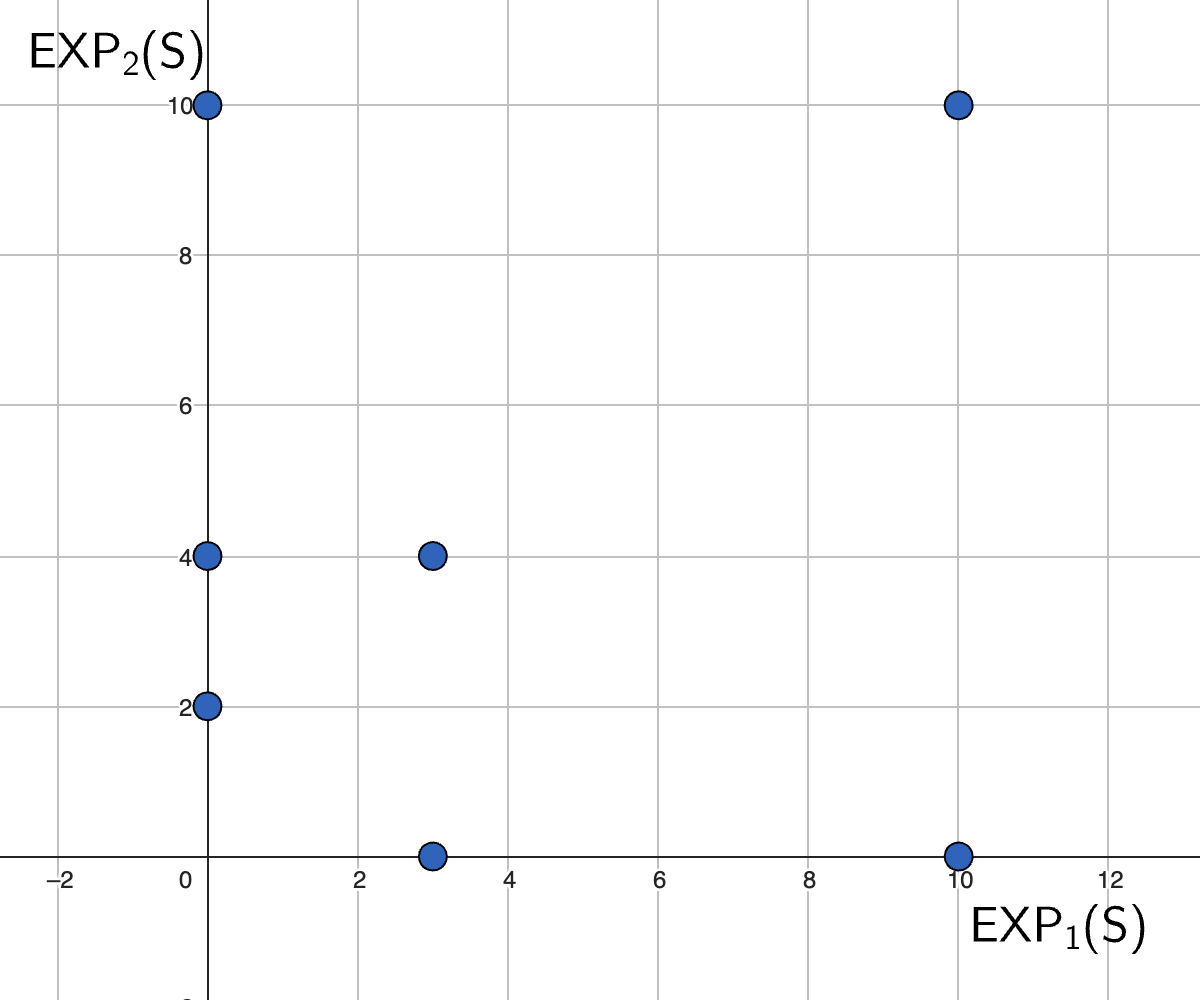}
    \end{subfigure}%
    ~
    \begin{subfigure}[t]{0.5\textwidth}
        \centering
        \includegraphics[width=.7\textwidth]{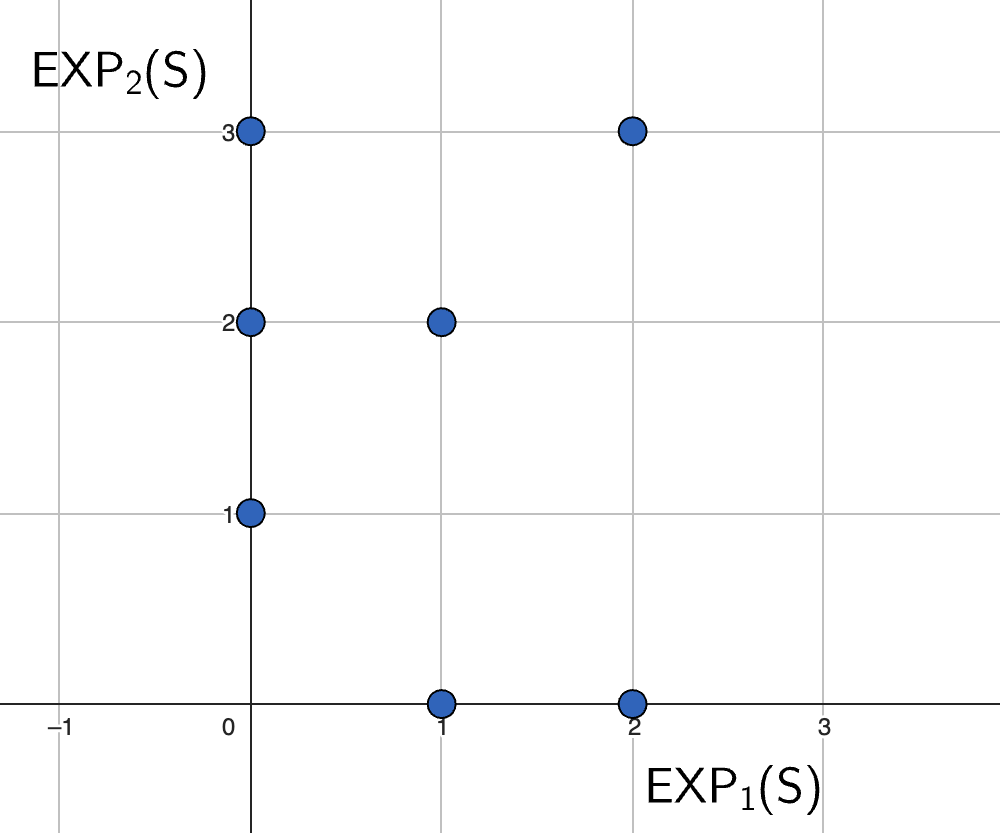}
	 \end{subfigure}
    \caption{The cyclotomic divisors of $A$ and the cyclotomic divisors of $A'$ .}
\end{figure*}

We now begin the proof of Proposition \ref{prop:truncation}.
We first define a family of mappings on $\mathbb{Z}_M$ which preserve cyclotomic divisibility on the scales we need, but remove ``unnecessary" $p_i$-adic digits.

\begin{definition}
Let $i \in \{1,\ldots,K\}$ and $1 \leq \alpha \leq n_i$ be given. Recalling the $p_i$-adic expansion of  $x_i\in \mathbb{Z}_{p_i^{n_i}} $ given in (\ref{padic-digits}), we
define a mapping $T_i^{\alpha} : \mathbb{Z}_{p_i^{n_i}} \rightarrow \mathbb{Z}_{p_i^{n_i}}$ by writing
$$
T_i^{\alpha} (x_i) : = x_i - x_{i,\alpha - 1} p_i^{\alpha - 1}, \quad \forall x_i \in \mathbb{Z}_{p_i^{n_i}}.
$$
so that $T_i^{\alpha}$ sends the $\alpha$-scale coordinate of $x_i$ to zero.  We further define a mapping $\mathbf{T}_i^{\alpha} : \mathbb{Z}_M \rightarrow \mathbb{Z}_M$ by writing in array coordinates,
$$
\mathbf{T}_i^{\alpha} (x_1,\ldots,x_K) : = \big(x_1,\ldots,T_i^{\alpha} (x_i),\ldots,x_K \big), \quad \forall (x_1,\ldots,x_K) \in \mathbb{Z}_{p_1^{n_1}} \times \cdots \times \mathbb{Z}_{p_K^{n_K}}.
$$
\end{definition}

We note that $\bfT_i^\alpha$ has the following property:
\begin{equation}\label{e-trunk0}
\forall y,z\in\ZZ_M,\ \  y_{j,\beta}=z_{j,\beta} \ \Rightarrow (\bfT_i^\alpha(y))_{j,\beta} = (\bfT_i^\alpha(z))_{j,\beta}
\end{equation}
for all $j\in\{1,\dots,K\}$ and $\beta\in\{0,1,\dots,n_j-1\}$. In other words, if some of the multiscale digits of $y,z$ are equal, then the corresponding digits of their images under $\bfT_i^\alpha$ are also equal. The converse implication fails when $j=i$ and $\beta=\alpha-1$, since then the corresponding (possibly non-equal) coordinates of $y$ and $z$ are both sent to $0$.

\begin{lemma}\label{lma:truncationfiber}
Let $N \mid M$ with $p_i^{\beta} \parallel N$ for some $0 \leq \beta \leq n_i$ and let $1 \leq \alpha \leq n_i$ be given.  Let $F \in \calm^+ (\mathbb{Z}_M)$ satisfy $F \equiv x * F_j^{N}$ mod $N$ for some $p_j \mid N$ and $x \in \mathbb{Z}_N$. Assume that at least one of the following holds:
\begin{itemize}
\item[(i)] $j\neq i$,
\medskip
\item[(ii)] $\beta \neq \alpha$.
\end{itemize}
Then $\mathbf{T}_i^{\alpha} (F) \equiv \mathbf{T}_i^{\alpha} (x) * F_j^N$ mod $N$.
\end{lemma}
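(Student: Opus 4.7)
The plan is to reduce the claim to an identity of multisets inside $\ZZ_N$ by first showing that $\mathbf{T}_i^\alpha$ descends to a well-defined mapping $\bar{\mathbf{T}}_i^\alpha$ on $\ZZ_N$. Once this is in place, we have $\mathbf{T}_i^\alpha(F) \bmod N = \bar{\mathbf{T}}_i^\alpha(F \bmod N)$ for any $F \in \calm^+(\ZZ_M)$, so under the hypothesis $F \equiv x * F_j^N \bmod N$ the lemma reduces to the identity
$$\bar{\mathbf{T}}_i^\alpha(x * F_j^N) = \bar{\mathbf{T}}_i^\alpha(x) * F_j^N$$
in $\calm^+(\ZZ_N)$. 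To check the descent, I would verify that $T_i^\alpha$ preserves congruence modulo $p_i^\beta$ on $\ZZ_{p_i^{n_i}}$: if $\alpha \leq \beta$, the digit position $\alpha-1$ lies among those retained by the reduction, so the subtraction of $x_{i,\alpha-1}p_i^{\alpha-1}$ descends to a well-defined operation on $\ZZ_{p_i^\beta}$; if $\alpha > \beta$, then $p_i^{\alpha-1} \equiv 0 \pmod{p_i^\beta}$, so $T_i^\alpha$ acts as the identity on the quotient, and hence $\bar{\mathbf{T}}_i^\alpha$ is the identity on the $i$-th coordinate of $\ZZ_N$.

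With the descent in hand, I will treat the two cases of the lemma separately. In case (i), where $j \neq i$, every element of $x * F_j^N$ shares the same $i$-th coordinate $x_i$, because $N/p_j$ has zero $k$-th coordinate in $\ZZ_N$ for every $k \neq j$. Applying $\bar{\mathbf{T}}_i^\alpha$ replaces this shared coordinate by $T_i^\alpha(x_i)$ uniformly and leaves the remaining coordinates untouched, so the image is precisely $\bar{\mathbf{T}}_i^\alpha(x) * F_j^N$. In case (ii), where $j = i$ and $\beta \neq \alpha$: if $\alpha > \beta$, then the descent step already gives $\bar{\mathbf{T}}_i^\alpha = \mathrm{id}$ on $\ZZ_N$ and the identity is immediate; if $\alpha < \beta$, the fiber shifts $kp_i^{\beta-1}$, $k = 0, \ldots, p_i - 1$, modify only the $(\beta-1)$-th $p_i$-adic digit of the $i$-th coordinate and leave the $(\alpha-1)$-th digit equal to $x_{i,\alpha-1}$. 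A direct computation then gives
$$T_i^\alpha(x_i + kp_i^{\beta-1}) = (x_i + kp_i^{\beta-1}) - x_{i,\alpha-1}p_i^{\alpha-1} = T_i^\alpha(x_i) + kp_i^{\beta-1},$$
so $T_i^\alpha$ commutes with the fiber shifts and the $p_j$-fiber structure is preserved.

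The only configuration not covered is the excluded case $j = i$ with $\alpha = \beta$, which is precisely where $T_i^\alpha$ zeroes out the digit the fiber varies, collapsing all $p_i$ vertices of the fiber onto the single point $\bar{\mathbf{T}}_i^\alpha(x)$; this is the reason the hypothesis is needed. The main care point in the argument is therefore notational rather than conceptual: one must keep clearly separate the exponent-based parameter $\alpha$ (which acts on the digit in position $\alpha - 1$) and the scale parameter $\beta$ (for which the fiber varies the digit in position $\beta - 1$), and confirm that the dichotomy ``$j \neq i$ or $\alpha \neq \beta$'' genuinely excludes the collapsing situation. Beyond this, the verification reduces to the routine observation (\ref{e-trunk0}) about $\mathbf{T}_i^\alpha$ preserving multiscale digit equalities.
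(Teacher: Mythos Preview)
Your proof is correct. The organization differs from the paper's: you first establish that $\mathbf{T}_i^\alpha$ descends to a well-defined map $\bar{\mathbf{T}}_i^\alpha$ on $\ZZ_N$ and then verify that this descended map commutes with the fiber translation $y \mapsto y + \nu N/p_j$, splitting into the cases $j\neq i$, $j=i$ with $\alpha>\beta$ (where $\bar{\mathbf{T}}_i^\alpha$ is the identity), and $j=i$ with $\alpha<\beta$ (direct digit calculation). The paper instead stays in $\ZZ_M$ throughout, picks two distinct elements $y,z\in F$, and verifies $(\mathbf{T}_i^\alpha(y)-\mathbf{T}_i^\alpha(z),p_\ell^{\beta_\ell})$ prime by prime, case-splitting on $\ell\neq i$, $\ell=i\neq j$, and $\ell=i=j$. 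Your descent step is a clean conceptual observation the paper does not isolate, and it makes the multiset pushforward $\mathbf{T}_i^\alpha(F)\bmod N = \bar{\mathbf{T}}_i^\alpha(F\bmod N)$ transparent; the paper's gcd computation is slightly more uniform in that it does not separate $\alpha>\beta$ from $\alpha<\beta$. One cosmetic point: the fiber shifts in the $i$-th coordinate are $\nu c\,p_i^{\beta-1}$ with $c=(N/p_i^\beta)\bmod p_i$ a unit rather than literally $\nu p_i^{\beta-1}$, but your argument only uses that the shift is a multiple of $p_i^{\beta-1}$, so this does not affect the conclusion.
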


\begin{proof}
Let $N$ and $F$ be as in the statement of the lemma. This means that $F$ is a set of $p_j$ elements such that if $y,z\in F$ are distinct, then $(y-z,N)=N/p_j$. It suffices to prove that for any such $y,z$ we also have
$$
(\bfT_i^\alpha(y)- \bfT_i^\alpha(z),N)=N/p_j.
$$
Let $y,z\in F$ be distinct, and let $N=\prod p_\ell^{\beta_\ell}$ be the prime factorization of $N$, so that $\beta_i=\beta$. We need to prove that
\begin{equation}\label{e-trunk1}
\left(\bfT_i^\alpha(y)- \bfT_i^\alpha(z),p_\ell^{\beta_\ell}\right)=
\begin{cases}
p_\ell^{\beta_\ell}& \hbox{ if }\ell\neq j,\\
p_j^{\beta_j-1}& \hbox{ if }\ell= j.
\end{cases}
\end{equation}
We have
$$
\bfT_i^\alpha(y)- \bfT_i^\alpha(z)= (y-z) - (y_{i,\alpha-1}-z_{i,\alpha-1})p_i^{\alpha-1} M_i.
$$
We consider three cases.
\begin{itemize}
\item If $\ell\neq i$, then $p_\ell^{\beta_\ell}\mid M_i$, so that $(\bfT_i^\alpha(y)- \bfT_i^\alpha(z),p_\ell^{\beta_\ell})= (y-z, p_\ell^{\beta_\ell})$ and (\ref{e-trunk1}) follows.
\medskip

\item Suppose $\ell =i$ but $i\neq j$. Then $p_i^{\beta}\mid y-z$, so that $y_{i,\gamma}=z_{i,\gamma}$ for $\gamma\leq\beta-1$.  By (\ref{e-trunk0}), the same holds for the corresponding digits of $\bfT_i^\alpha(y)$ and
$\bfT_i^\alpha(z)$, implying (\ref{e-trunk1}).

\medskip

\item Finally, assume that $\ell=i=j$. Then $p_i^{\beta-1}\parallel (y-z)$, so that $y_{i,\beta-1}\neq z_{i,\beta-1}$
and $y_{i,\gamma}=z_{i,\gamma}$ for $\gamma< \beta-1$. By (\ref{e-trunk0}), we have $(\bfT_i^\alpha(y))_{i,\gamma}=(\bfT_i^\alpha(z))_{i,\gamma}$ for $\gamma< \beta-1$. Furthermore, since $\beta\neq\alpha$ in this case, we have
$$
(\bfT_i^\alpha(y))_{i,\beta-1 }= y_{i,\beta-1}\neq z_{i,\beta-1}= (\bfT_i^\alpha(z))_{i,\beta-1}.
$$
Hence $ p_i^{\beta-1}\parallel (\bfT_i^\alpha(y)- \bfT_i^\alpha(z))$, and (\ref{e-trunk1}) holds again.
\end{itemize}
\end{proof}

\begin{corollary}\label{cor-trunk1}
Let $S\subset\cald(M)$, and let $A\in\calm(\ZZ_M)$ satisfy $\Phi_s \mid A$ for all $s\in S$. Assume that $\alpha\not\in \capexp_i(S)$. Then $\Phi_s\mid \bfT_i^\alpha(A)$ for all $s\in S$.
\end{corollary}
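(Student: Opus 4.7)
The plan is to reformulate $\Phi_s\mid A$ via Proposition \ref{cuboid}(iii) as a signed fiber decomposition of $A\bmod s$, transport this decomposition through $\bfT_i^\alpha$ using Lemma \ref{lma:truncationfiber}, and then invoke Proposition \ref{cuboid}(iii)$\Rightarrow$(i) to conclude. I fix $s\in S$, let $p_i^\beta\parallel s$, and note that the hypothesis $\alpha\not\in\capexp_i(S)$ forces $\beta\neq\alpha$.

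The first step is to verify that $\bfT_i^\alpha$ descends to a well-defined map $\overline{\bfT}_i^\alpha:\ZZ_s\to\ZZ_s$. For $y,z\in\ZZ_M$ with $y\equiv z\bmod s$, the identity
$$
\bfT_i^\alpha(z)-\bfT_i^\alpha(y)=(z-y)-(z_{i,\alpha-1}-y_{i,\alpha-1})\,p_i^{\alpha-1}M_i
$$
yields $\bfT_i^\alpha(z)\equiv\bfT_i^\alpha(y)\bmod s$ in both regimes: if $\alpha\leq\beta$ then the digits $z_{i,\alpha-1}$ and $y_{i,\alpha-1}$ coincide, being determined by the common residue mod $p_i^\beta$; if $\alpha>\beta$ then $s\mid p_i^{\alpha-1}M_i$ since $M_i$ absorbs the part of $s$ coprime to $p_i$. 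Consequently, $\bfT_i^\alpha(A)\bmod s=\overline{\bfT}_i^\alpha(A\bmod s)$ for every $A\in\calm(\ZZ_M)$.

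By Proposition \ref{cuboid}(i)$\Rightarrow$(iii), $A\bmod s=\sum_k c_k\,(x_k*F_{j_k}^s)$ in $\calm(\ZZ_s)$ for integers $c_k$, primes $p_{j_k}\mid s$, and lifts $x_k\in\ZZ_M$. Linearity gives $\bfT_i^\alpha(A)\bmod s=\sum_k c_k\,\overline{\bfT}_i^\alpha(x_k*F_{j_k}^s)$. I would then apply Lemma \ref{lma:truncationfiber} to each $x_k*F_{j_k}^s\in\calm^+(\ZZ_M)$ with $N=s$ to obtain $\overline{\bfT}_i^\alpha(x_k*F_{j_k}^s)=\bfT_i^\alpha(x_k)*F_{j_k}^s$: if $j_k\neq i$ hypothesis (i) of the lemma holds, and if $j_k=i$ then $p_i\mid s$ forces $\beta\geq 1$, while $\beta\neq\alpha$ gives hypothesis (ii). Therefore $\bfT_i^\alpha(A)\bmod s$ is a signed sum of $s$-fibers, and Proposition \ref{cuboid}(iii)$\Rightarrow$(i) yields $\Phi_s\mid\bfT_i^\alpha(A)$. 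Since $s\in S$ was arbitrary, the corollary follows.

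The main obstacle is recognizing that two separate ingredients are needed: the descent computation (elementary but requiring a case split at $\alpha=\beta$) ensures that pushing forward under $\bfT_i^\alpha$ and reducing mod $s$ commute, while the hypothesis $\alpha\not\in\capexp_i(S)$ plays its essential role through Lemma \ref{lma:truncationfiber} by avoiding the forbidden case $j_k=i$, $\beta=\alpha$ in which a fiber would collapse under $\bfT_i^\alpha$.
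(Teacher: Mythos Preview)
Your proof is correct and follows essentially the same route as the paper's own argument: decompose $A\bmod s$ into signed fibers via Proposition~\ref{cuboid}(iii), push each fiber through $\bfT_i^\alpha$ using Lemma~\ref{lma:truncationfiber}, and conclude by Proposition~\ref{cuboid}(iii)$\Rightarrow$(i). Your explicit verification that $\bfT_i^\alpha$ descends to a well-defined map on $\ZZ_s$ (so that pushing forward and reducing mod $s$ commute) fills in a detail that the paper leaves implicit; this is a clarification rather than a different approach.
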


\begin{proof}
Let $S$ and $A$ satisfy the assumptions of the corollary. Fix $N\in S$. By the de Bruijn-R{\'e}dei-Schoenberg theorem (the equivalence of (i) and (iii) in Proposition \ref{cuboid}), we may write
$$
A(X)\equiv  \sum_{j:p_j \mid N} Q_j(X) F^N_j(X) \mod (X^N-1),
$$
where $Q_j$ are polynomials with integer coefficients. Since $\alpha\not\in \capexp_i(S)$, the assumptions of
Lemma \ref{lma:truncationfiber} are satisfied, hence
$\mathbf{T}_i^{\alpha}$ maps each fiber $x*F_j^N$ mod $N$ to a fiber $\mathbf{T}_i^{\alpha} (x) * F_j^N$ mod $N$. It follows that
$\mathbf{T}_i^{\alpha} (A)$ can also be written as a linear combination of fibers on scale $N$, and another application of the equivalence (i)$\Leftrightarrow$(iii) in Proposition \ref{cuboid} proves that $\Phi_N\mid \bfT_i^\alpha(A)$.
\end{proof}

\begin{corollary}\label{cor-trunk2}
For $S\subset\cald(M)$, we define a mapping $\mathbb{T}_S : \mathbb{Z}_M \rightarrow \mathbb{Z}_M$ via
\begin{equation}\label{eq:localizationmapping}
\mathbb{T}_S (x_1,\ldots,x_K) : = \bigg(\sum\limits_{\alpha_1 \in \textsf{EXP}_1(S)} x_{1,\alpha_1 -1} p_1^{\alpha_1 -1 }, \cdots, \sum\limits_{\alpha_K \in \textsf{EXP}_K(S)} x_{K, \alpha_K - 1} p_K^{\alpha_K - 1} \bigg).
\end{equation}
Assume that $A \in \calm (\mathbb{Z}_M)$ satisfies $\Phi_s \mid A$ for all $s\in S$. Then the multiset $\bbT_S(A) \in \calm (\mathbb{Z}_M)$, with weight function
\begin{equation}\label{truncated-weights}
w_{\bbT_S(A)} (x) : = \sum\limits_{\{y:\   \bbT_S(y) =x\}} w_A (y) \quad \forall x \in \mathbb{Z}_M,
\end{equation}
satisfies $\Phi_s\mid \bbT_S(A)$ for all $s\in S$. Furthermore, if $A\in\calm^+(\ZZ_M)$, then $\bbT_S(A)\in\calm^+(\ZZ_M)$.
\end{corollary}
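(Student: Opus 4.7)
The plan is to factor $\bbT_S$ as an ordered composition of the elementary truncations $\bfT_i^\alpha$ and then invoke Corollary \ref{cor-trunk1} repeatedly. First I would unpack (\ref{eq:localizationmapping}) in coordinates: the $i$-th coordinate of $\bbT_S(x)$ is obtained from $x_i$ by keeping the $p_i$-adic digits $x_{i,\alpha-1}$ with $\alpha\in\capexp_i(S)$ and zeroing every other digit. Since each $\bfT_i^\alpha$ zeros out exactly one digit (the $(\alpha-1)$-digit of the $i$-th coordinate) while leaving the remaining digits untouched, two such maps $\bfT_i^\alpha$ and $\bfT_j^\beta$ commute whenever the pairs $(i,\alpha)$ and $(j,\beta)$ are distinct. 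Consequently,
$$
\bbT_S \;=\; \prod_{i=1}^{K}\,\prod_{\alpha\in\{1,\dots,n_i\}\setminus\capexp_i(S)} \bfT_i^\alpha,
$$
independently of the order in which the factors are taken.

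Next I would note that the pushforward of multisets is functorial: for any map $T:\ZZ_M\to\ZZ_M$, the pushforward $T(A)\in\calm(\ZZ_M)$ has weights $w_{T(A)}(x)=\sum_{y:T(y)=x}w_A(y)$, and $(T_2\circ T_1)(A)=T_2(T_1(A))$. Comparing this with (\ref{truncated-weights}) shows that the multiset $\bbT_S(A)$ defined there is precisely the iterated pushforward of $A$ under the above product.

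I would then iterate Corollary \ref{cor-trunk1} along this factorization. At each step we are pushing forward a current multiset $A^{(k)}$ (with $A^{(0)}:=A$) under some $\bfT_i^\alpha$ such that $\alpha\notin\capexp_i(S)$; since $\Phi_s\mid A^{(k)}$ holds for every $s\in S$ by the inductive hypothesis, Corollary \ref{cor-trunk1} guarantees $\Phi_s\mid A^{(k+1)}=\bfT_i^\alpha(A^{(k)})$ for every $s\in S$. The hypothesis $\alpha\notin\capexp_i(S)$ is a property of $S$ only, and hence stays valid at every step. Iterating through the finitely many factors yields $\Phi_s\mid \bbT_S(A)$ for all $s\in S$.

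Finally, if $A\in\calm^+(\ZZ_M)$, then each weight $w_{\bbT_S(A)}(x)$ is a sum of nonnegative numbers, hence nonnegative, and the total mass is preserved since $\sum_x w_{\bbT_S(A)}(x)=\sum_y w_A(y)=|A|>0$; thus $\bbT_S(A)\in\calm^+(\ZZ_M)$. The only conceptual point requiring care is the commutativity and correct factorization of the $\bfT_i^\alpha$, together with the observation that (\ref{truncated-weights}) really is the pushforward under this composition; once that bookkeeping is pinned down, the remainder is a clean induction on the factors via Corollary \ref{cor-trunk1}.
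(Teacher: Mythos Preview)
Your proposal is correct and follows essentially the same approach as the paper: you factor $\bbT_S$ as the composition of the elementary maps $\bfT_i^\alpha$ over all pairs $(i,\alpha)$ with $\alpha\in\{1,\dots,n_i\}\setminus\capexp_i(S)$, then iterate Corollary~\ref{cor-trunk1}, and deduce the positivity statement directly from the pushforward formula~(\ref{truncated-weights}). Your write-up is in fact more detailed than the paper's (which simply asserts the factorization and the iteration), spelling out the commutativity of the $\bfT_i^\alpha$ and the functoriality of pushforwards explicitly.
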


\begin{proof}
This follows by observing that $\bbT_S$ is the composition of the mappings $\bfT_i^\alpha$, where $(i,\alpha)$ runs over all pairs such that $i\in\{1,\dots,K\}$ and $\alpha\in \{1,\dots,n_i\}\setminus \capexp_i(S)$, and applying Corollary \ref{cor-trunk1} iteratively to each such mapping. The last statement is a consequence of (\ref{truncated-weights}).
\end{proof}

\begin{proof}[Proof of Proposition \ref{prop:truncation}]
Let $S$, $A$, and $M'$ be as in the statement of the proposition. We enumerate the elements of each set $\capexp_i(S)$ in increasing order as in (\ref{e-trunk5}). We also equip $\ZZ_{M'}$ with a standard coordinate system similar to that in $\ZZ_M$.

We first let $\tilde A:=\bbT_S(A)\in\calm(\ZZ_M)$, then $|\tilde A|=|A|$ and $\Phi_s\mid A$ for all $s\in S$ by Corollary \ref{cor-trunk2}. We further have $\supp(\tilde A)\subset Y$, where
$$
Y:=\bbT_S(\ZZ_M) = \{y\in\ZZ_M:\ y_{i,\alpha}=0\hbox{ for all }(i,\alpha)\hbox{ such that }\alpha\not\in\capexp_i(S)\}.
$$
We define a bijection $\mathbb{U} : Y\to  \mathbb{Z}_{M'}$ by
$$
\mathbb{U} \bigg(\sum\limits_{j=0}^{E_1-1} x_{1,j} p_1^{\alpha_{1,j} -1 }, \cdots, \sum\limits_{j=0}^{E_K-1} x_{K, j} p_K^{\alpha_{K,j} - 1} \bigg)
=\bigg(\sum\limits_{j=0}^{E_1 - 1} x_{1,j} p_1^{j}, \cdots, \sum\limits_{j=0}^{E_K - 1} x_{K,j} p_K^{j} \bigg).
$$
Let $A' \in \calm (\mathbb{Z}_{M'})$ be the multiset defined via the weight equality
\begin{equation}\label{eq:truncationweigthdefn}
w_{A'}^{M'} (x) : = w_{\tilde{A}} (\mathbb{U}^{-1} (x) ), \quad \forall x \in \mathbb{Z}_{M'}.
\end{equation}
We clearly have $|A'|=|\tilde A|=|A|$. It remains to prove that
$$
\hbox{if }N =  p_1^{\alpha_{1,\ell_1}} \cdots p_K^{\alpha_{K,\ell_K}}\in S, \hbox{ then }
\Phi_{N'} (X) \mid A'(X), \hbox{ where }N' : = p_1^{\ell_1} \cdots p_K^{\ell_K}.
$$
Let $N\in S$, then $\Phi_N \mid \tilde A$ as noted above.
By the equivalence (i)$\Leftrightarrow$(iii)
in Proposition \ref{cuboid}, $\tilde A$ may be written as a linear combination of fibers on scale $N$. However,
$\mathbb{U}$ maps fibers on scale $N$ in $\mathbb{Z}_M$ to fibers on scale $N'$ in $\mathbb{Z}_{M'}$, so that $A'$ is a linear combination of such fibers. By another application of Proposition \ref{cuboid}, we have $\Phi_{N'}\mid A'$ as claimed.
Finally, if $A \in \calm^+ (\mathbb{Z}_M)$, then $A' \in \calm^+ (\mathbb{Z}_{M'})$
by \eqref{eq:truncationweigthdefn} and the last part of Corollary \ref{cor-trunk2}.
\end{proof}


\section{A Multiscale Cuboid Argument}\label{multiscale-cuboid}


Let $N \mid M$. Recall that
we defined $D(N)$ and $\mathcal{P}(N)$ in (\ref{eq-def-PN}) and (\ref{eq-def-DN}).
For $y \in \mathbb{Z}_N$, we continue to write
\begin{equation}\label{eq:gridforlemma}
\Lambda (y, D(N)) : = \{x \in \mathbb{Z}_N : D(N) \mid (x - y) \}.
\end{equation}

Let $p \in \mathcal{P}(N)$ with $p^{\alpha} \mid \mid N$. For each $\nu\in\{0,1,\dots,p-1\}$,  let $y_{\nu} = y + \nu N /p$. Then
$$
\Lambda(y,D(N)) = \bigcup_{\nu=0}^{p-1} \Lambda(y_\nu, pD(N)),
$$
which corresponds to a decomposition of the original grid $\Lambda(y,D(N))$ into those parts which are contained in the planes $\Pi (y_{\nu}, p^{\alpha}) : = \{x \in \mathbb{Z}_N : p^{\alpha} \mid (x - y_{\nu})\}$.

%

\begin{proposition}\label{lma:reductionindimension} Let $ A\in  \calm^+(\ZZ_N)$.
Suppose that $\Phi_N \mid A$ and that $p \in \mathcal{P}(N)$ with $p^{\alpha} \mid \mid N$.  Then at least one of the following holds.

\begin{enumerate}
\item $\Phi_{N} \Phi_{N/p} \cdots \Phi_{N/p^{\alpha}} \mid A$.

\medskip

\item For $\nu\in\{0,1,\dots,p-1\}$ and $a\in A$, define the multisets $A_{\nu,a} \subset A$ by
\begin{equation}\label{eq:Aplanesandgrids}
A_{\nu,a}:= A\cap \Lambda(y_\nu,pD(N))
\end{equation}
where $y_{\nu} = a + \nu N /p$.
Then there exists some $a \in A$ such that $A_{\nu,a}$ are nonempty for all $\nu\in\{0,1,\dots,p-1\}$.

\end{enumerate}
\end{proposition}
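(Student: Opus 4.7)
I argue by contrapositive. Assume condition (2) fails, so for every $a\in A$ there is some $\nu(a)\in\{0,\dots,p-1\}$ with $A_{\nu(a),a}=\emptyset$. Since the decomposition $\Lambda(y,D(N))=\bigcup_\nu\Lambda(y_\nu,pD(N))$ depends only on the grid $\Lambda(a,D(N))$, this failure is equivalent to the statement: on every $D(N)$-grid $\Lambda$ meeting $A$, the projection of $\supp(A\cap\Lambda)$ onto the $p$-coordinate omits some value $\mu(\Lambda)\in\ZZ_p$. I will show this forces condition (1), namely $\Phi_{N/p^j}\mid A$ for $j=0,1,\dots,\alpha$, the case $j=0$ being our standing hypothesis.

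For each $j\in\{1,\dots,\alpha\}$, Proposition~\ref{cuboid}(ii) reduces the claim to showing $\bbA^{N/p^j}[\Delta]=0$ for every $(N/p^j)$-cuboid $\Delta$. Via the $p^j$-fold preimage structure of the projection $\ZZ_N\to\ZZ_{N/p^j}$,
\[
\bbA^{N/p^j}[\Delta] \;=\; \sum_{\mu=0}^{p^j-1}\ \sum_{\vec\epsilon}\,(-1)^{|\vec\epsilon|}\, w_A\bigl(\tilde x_{\vec\epsilon}+\mu N/p^j\bigr),
\]
where $\tilde x_{\vec\epsilon}\in\ZZ_N$ is a lift of the vertex $x_{\vec\epsilon}\in\ZZ_{N/p^j}$. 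The $p^j$ shifts organize into $p^{j-1}$ blocks by which $D(N)$-grid they land in; each block consists of exactly $p$ shifts corresponding to the $p$ distinct $p$-slices of a single $D(N)$-grid.

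The two ingredients that force block-wise vanishing are as follows. First, the telescoping factorization
\[
(1-X^{d_p N/p}) \;=\; (1-X^{d_p N/p^{j+1}})\,\prod_{k=1}^{j}\Phi_p\!\left(X^{d_p N/p^{j+2-k}}\right)
\]
augments $\Delta$ into a bona fide $N$-cuboid, whose evaluation on $A$ vanishes by $\Phi_N\mid A$; comparing the sub-sums obtained by varying the $p$-direction augmentation shows that within a single block, the $p$ slice sub-sums are pairwise equal. Second, each relevant $D(N)$-grid $\Lambda$ has a missing slice $\mu(\Lambda)$ by the failure of condition (2), so the corresponding slice sub-sum is identically zero; combined with the first ingredient, this forces all slice sub-sums of the block to vanish. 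Summing over the $p^{j-1}$ blocks yields $\bbA^{N/p^j}[\Delta]=0$, as required.

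The hard part will be the combinatorial bookkeeping when $j\geq 2$ or $\alpha\geq 2$. In these regimes the non-$p$ cuboid steps $d_q(N/p^j)/q$ (for $q\neq p$) acquire nonzero $p$-components of $p$-adic valuation $\alpha-j$, so the lifted vertices $\tilde x_{\vec\epsilon}$ themselves span several distinct $D(N)$-grids even before the $\mu$-shifts are applied. Organizing the vertex-shift pairs $(\vec\epsilon,\mu)$ into $D(N)$-grid blocks on which both the augmented $N$-cuboid identity and the missing-slice vanishing apply coherently is the delicate part of the argument; this bookkeeping, together with the verification that the telescoping-augmented mask polynomial really does satisfy the $N$-cuboid form required by Proposition~\ref{cuboid}(ii), constitutes the main technical obstacle.
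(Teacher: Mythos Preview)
Your contrapositive direction (assume (2) fails and derive (1)) and your ``first ingredient'' are both sound: the observation that $\Phi_N\mid A$ forces flat cuboid evaluations to agree across the $p$ slices of any $D(N)$-grid, combined with a missing slice, kills all such evaluations. This is exactly the content of the paper's Lemmas~\ref{lma:chaindivflat} and~\ref{lma:cuboidstackflat}, just run backwards. The paper instead assumes (1) fails, invokes Lemma~\ref{lma:chaindivflat} (quoted from \cite{LL1}) to extract a single flat cuboid $\Delta_0^p$ with nonzero $\bbA^N$-evaluation, takes $a$ to be a vertex of $\Delta_0^p$ in $A$, and then uses Lemma~\ref{lma:cuboidstackflat} to spread the nonzero evaluation over all $p$ slices, establishing (2). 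That direction is shorter because one only needs to exhibit \emph{one} element $a$, rather than verify a family of divisibilities.

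Where your proposal runs into real trouble is the attempt to bypass Lemma~\ref{lma:chaindivflat} and verify $\Phi_{N/p^j}\mid A$ directly via $(N/p^j)$-cuboid evaluations. Your telescoping identity only treats the $p$-direction factor; it does nothing for the non-$p$ factors $(1-X^{d_q N/(p^j q)})$, whose steps have $p$-adic valuation $\alpha-j$ rather than $\alpha$ and hence are not $N$-cuboid steps. As you yourself note, for $j\geq 2$ the lifted vertices $\tilde x_{\vec\epsilon}$ already span several $D(N)$-grids before any $\mu$-shift, so the ``$p^{j-1}$ blocks of $p$ shifts each, one block per grid'' picture breaks down: the vertex--shift pairs $(\vec\epsilon,\mu)$ do not partition cleanly by $D(N)$-grid, and the augmented object is not a genuine $N$-cuboid. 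This is not just delicate bookkeeping---it is precisely the content of Lemma~\ref{lma:chaindivflat}, whose proof (in \cite{LL1}) proceeds by a different route: flat cuboid vanishing is equivalent to $\Phi_{N''}\mid A_t$ on every $p$-plane $t$ (where $N''=N/p^\alpha$), and then one checks at primitive $(N/p^j)$-th roots directly.

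The fix is simply to stay with flat cuboids. Under the failure of (2), every $D(N)$-grid meeting $A$ has an empty $pD(N)$-slice; since a flat cuboid sits entirely inside one $pD(N)$-slice and its $p$-translate evaluations agree by your first ingredient (Lemma~\ref{lma:cuboidstackflat}), all flat cuboid evaluations vanish. Then Lemma~\ref{lma:chaindivflat} gives (1) immediately, with no $(N/p^j)$-cuboid analysis needed.
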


The proof of Proposition \ref{lma:reductionindimension} is based on multiscale cuboid argument similar to that in \cite[Section 5]{LL1}.
To simplify notation, we fix $N \mid M$, let $\mathfrak{J} = \{j : p_j \mid N \}$, and fix some specific prime $p = p_{j_0} \mid N$. We also let $\mathfrak{J}' : = \mathfrak{J} \setminus \{j_0\}$.
A \textbf{flat cuboid} is then a multiset $\Delta^{p} \in \calm (\mathbb{Z}_N)$ of the form
$$
\Delta^{p} (X) =  X^{c}  \prod_{j\,  \in \, \mathfrak{J}'} \big(1-X^{d_jN/p_j}\big) \mod X^{N} - 1,
$$
where $c,d_j \in \mathbb{Z}_N$ and $(d_j,p_j) = 1$ for every $j \in \mathfrak{J}'$.  If $N$-cuboids correspond to $\vert \mathfrak{J} \vert$-dimensional rectangular boxes,  then flat cuboids correspond to rectangular boxes of dimension $\vert \mathfrak{J} \vert$ - 1 contained in planes perpendicular to the $p$ direction.

Flat cuboids are useful in so far as the associated $\Delta^{p}$-evaluations of $A$ in $\mathbb{Z}_N$ indicate simultaneous divisibility by multiple cyclotomic polynomials.  Lemma \ref{lma:chaindivflat} is taken from \cite{LL1}, but a similar result (in a somewhat different language) also appears in \cite[Lemma 2.13]{KMSV2}.

\begin{lemma}\label{lma:chaindivflat} \cite[Example 5.9(2)]{LL1} Let $A\in\calm^+(\ZZ_N)$.
Let $p^{\alpha} \mid \mid N$, and assume that
$
\mathbb{A}^N [\Delta^{p}] = 0
$
for every flat cuboid $\Delta^{p}$ as defined above. Then
$$
\Phi_{N}\Phi_{N/p} \cdots \Phi_{N/p^{\alpha}} \mid A.
$$
\end{lemma}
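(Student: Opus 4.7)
The idea is to reduce the flat cuboid hypothesis on $A$ to a \emph{slice-wise} vanishing property, and then to derive divisibility by $\Phi_{N/p^k}$ for all $k=0,1,\dots,\alpha$ via a Chinese Remainder Theorem computation on primitive roots of unity.

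\textbf{Setup.} Write $N=p^\alpha m$ with $(p,m)=1$. Using the array coordinates of Section \ref{subsec-coord}, parameterize $\ZZ_N$ by pairs $(y_1,y_2)\in\ZZ_{p^\alpha}\times\ZZ_m$ via $y\equiv y_1m+y_2p^\alpha\pmod{N}$. For each $y_1\in\ZZ_{p^\alpha}$ introduce the slice polynomial
\[
B_{y_1}(X):=\sum_{y_2\in\ZZ_m}w_A(y_1,y_2)\,X^{y_2},
\]
so that modulo $X^N-1$ one has
\[
A(X)\equiv\sum_{y_1\in\ZZ_{p^\alpha}}X^{y_1m}\,B_{y_1}(X^{p^\alpha}).
\]

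\textbf{Step 1 (slice divisibility).} Let $\Delta^p(X)=X^c\prod_{j\in\mathfrak{J}'}(1-X^{d_jN/p_j})$ be a flat cuboid, with $(d_j,p_j)=1$, and write $c=c_1m+c_2p^\alpha$. The identity $d_jN/p_j=(d_jm/p_j)\,p^\alpha$ shows that in the $(y_1,y_2)$-coordinates each step $d_jN/p_j$ has first component zero and second component $d_jm/p_j$. Hence every vertex of $\Delta^p$ lies on the plane $\{y:y_1=c_1\}$, and the $y_2$-components of the vertices form the vertices of the $m$-cuboid
\[
\Delta_m(X):=X^{c_2}\prod_{j\in\mathfrak{J}'}(1-X^{d_jm/p_j}).
\]
Moreover, as $c$ and the units $d_j$ vary, every pair (plane $\{y_1=c_1\}$, $m$-cuboid $\Delta_m$) arises in this way, and by construction $\bbA^N[\Delta^p]$ equals the $\Delta_m$-evaluation, in $\ZZ_m$, of the slice multiset with weights $y_2\mapsto w_A(c_1,y_2)$. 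Therefore the flat cuboid hypothesis is equivalent, via Proposition \ref{cuboid}(i)$\Leftrightarrow$(ii) applied separately to each slice, to $\Phi_m\mid B_{y_1}$ for every $y_1\in\ZZ_{p^\alpha}$.

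\textbf{Step 2 (CRT on roots of unity).} Fix $k\in\{0,1,\dots,\alpha\}$ and let $\zeta$ be any primitive $(N/p^k)$-th root of unity. Since $N/p^k=p^{\alpha-k}m$ with $(p^{\alpha-k},m)=1$, CRT gives $\zeta=\omega\eta$ with $\omega$ primitive of order $p^{\alpha-k}$ and $\eta$ primitive of order $m$. Because $(p,m)=1$, the powers $\omega^m$ and $\eta^{p^\alpha}$ are again primitive of orders $p^{\alpha-k}$ and $m$, respectively. Substituting into the decomposition of $A(X)$,
\[
A(\zeta)=\sum_{y_1\in\ZZ_{p^\alpha}}(\omega^m)^{y_1}\,B_{y_1}(\eta^{p^\alpha})=0,
\]
since each $B_{y_1}(\eta^{p^\alpha})=0$ by Step 1. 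Thus $\Phi_{N/p^k}\mid A(X)$ for every $k\in\{0,1,\dots,\alpha\}$; these cyclotomic polynomials are pairwise coprime irreducibles in $\QQ[X]$, so their product divides $A(X)$.

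\textbf{Main obstacle.} The only technically delicate point is the coordinate bookkeeping in Step 1: one must cleanly verify the identity $d_jN/p_j=(d_jm/p_j)p^\alpha$ and check that varying $c$ and the units $d_j$ in a flat cuboid exhausts every pair (plane, $m$-cuboid). Once this identification is in place, the CRT substitution in Step 2 and the passage to the product of cyclotomics are routine.
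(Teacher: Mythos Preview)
Your proof is correct. The paper does not give its own argument for this lemma; it simply cites \cite[Example 5.9(2)]{LL1}. Your approach---slicing $\ZZ_N\cong\ZZ_{p^\alpha}\times\ZZ_m$, recognizing that flat cuboids are precisely $m$-cuboids confined to a single $\ZZ_{p^\alpha}$-slice, and then evaluating $A$ at primitive $(N/p^k)$-th roots via CRT---is clean, self-contained, and uses only Proposition~\ref{cuboid} from the present paper. One minor remark: the hypothesis $A\in\calm^+(\ZZ_N)$ is never used in your argument (nor is it needed for the conclusion), so the lemma in fact holds for arbitrary $A\in\calm(\ZZ_N)$.
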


Observe that any $N$-cuboid $\Delta$ as in (\ref{def-delta}) can be written as $\Delta (X) = \Delta_-^{p} - \Delta_+^{p}$ where
$$
\Delta_-^{p} (X) = X^{c}  \prod_{j\,  \in \, \mathfrak{J}'} \big(1-X^{d_jN/p_j}\big) \mod X^{N} - 1,
$$
$$
\Delta_+^{p}(X) = X^{c + dN/p}  \prod_{j\,  \in \, \mathfrak{J}'} \big(1-X^{d_jN/p_j}\big) \mod X^{N} - 1.
$$
Together with Proposition \ref{cuboid}(ii), this gives the following result for flat cuboids.

\begin{lemma}\label{lma:cuboidstackflat}
Let $y \in \mathbb{Z}_N$. For $\nu = 0,\ldots,p-1$, let $y_{\nu} : = y + \nu N/p$ and
$$
\Delta_{\nu}^{p} (X) : = X^{y_{\nu}} \prod_{j\,  \in \, \mathfrak{J}'} \big(1-X^{d_jN/p_j}\big) \mod X^{N} - 1.
$$
If $\Phi_N \mid A$, then
\begin{equation}\label{eq:flatequal}
\mathbb{A}^N [\Delta_{\nu}^{p}] = \mathbb{A}^N [\Delta_{\nu'}^{p}]
\end{equation}
for every $0 \leq \nu, \nu' \leq p - 1$.
\end{lemma}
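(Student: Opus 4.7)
The plan is to express the difference $\Delta_\nu^p - \Delta_{\nu'}^p$ as (plus or minus) the mask polynomial of an $N$-cuboid, and then invoke Proposition \ref{cuboid}(ii), which tells us that $\Phi_N \mid A$ forces $\mathbb{A}^N[\Delta] = 0$ for every $N$-cuboid $\Delta$. The lemma will then follow from the linearity of the evaluation functional $\mathbb{A}^N[\cdot]$ in the second argument.

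More concretely, since $y_\nu = y + \nu N/p$, I first compute
\begin{equation*}
\Delta_\nu^p(X) - \Delta_{\nu'}^p(X) = X^{y_{\nu'}}\bigl(X^{(\nu-\nu')N/p} - 1\bigr) \prod_{j \in \mathfrak{J}'} \bigl(1 - X^{d_j N/p_j}\bigr) \mod X^N - 1.
\end{equation*}
Setting $d := \nu - \nu'$ and $c := y_{\nu'}$, this equals $-X^c (1 - X^{dN/p}) \prod_{j\in\mathfrak{J}'}(1-X^{d_jN/p_j})$, which fits the template in Definition \ref{def-N-cuboids} with $j_0 = j$ (the index of $p$) contributing the factor $d_{j_0} = d$, provided $(d, p) = 1$. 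Since $\nu, \nu' \in \{0, 1, \dots, p-1\}$ are distinct and $p$ is prime, we indeed have $0 < |d| < p$, hence $(d, p) = 1$; this is the one step where it matters that $p$ is prime. Thus $\Delta_\nu^p - \Delta_{\nu'}^p$ (up to sign) is the mask polynomial of an $N$-cuboid.

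Applying Proposition \ref{cuboid}(ii) to that $N$-cuboid, together with the linearity
\begin{equation*}
\mathbb{A}^N[\Delta_\nu^p] - \mathbb{A}^N[\Delta_{\nu'}^p] = \mathbb{A}^N[\Delta_\nu^p - \Delta_{\nu'}^p] = 0,
\end{equation*}
yields the claimed equality $\mathbb{A}^N[\Delta_\nu^p] = \mathbb{A}^N[\Delta_{\nu'}^p]$ for every pair $0 \le \nu, \nu' \le p-1$ (the case $\nu = \nu'$ being trivial).

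I do not anticipate any real obstacle here: the statement is essentially a bookkeeping consequence of Proposition \ref{cuboid}(ii), and the only substantive check is confirming that the difference of two ``parallel'' flat cuboids sitting in adjacent $p$-planes is literally an $N$-cuboid, which reduces to the coprimality $(\nu - \nu', p) = 1$. Care is needed to write the factorization in exactly the form of Definition \ref{def-N-cuboids} and to track the sign, but these are routine.
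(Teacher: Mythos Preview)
Your proof is correct and follows essentially the same approach as the paper: express the difference of two flat cuboids as (minus) an $N$-cuboid and apply Proposition~\ref{cuboid}(ii). The only cosmetic difference is that the paper chains consecutive indices $\nu,\nu+1$ (so that $d=1$ and coprimality is automatic), whereas you handle an arbitrary pair $\nu,\nu'$ directly and check $(\nu-\nu',p)=1$ using that $p$ is prime.
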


\begin{proof}
For each $\nu = 0,\ldots,p - 2$, the multiset $\Delta \in \calm (\mathbb{Z}_N)$ with the mask polynomial
$
\Delta (X) = \Delta_{\nu}^{p} (X) - \Delta_{\nu + 1}^{p} (X)
$
is an $N$-cuboid. By Proposition \ref{cuboid}(ii), we have
$$
\mathbb{A}^N [\Delta_{\nu}^{p}] - \mathbb{A}^N [\Delta_{\nu + 1}^{p}] = \mathbb{A}^N [\Delta] = 0,
$$
for all $\nu = 0,\ldots,p - 2$. This proves the lemma.
\end{proof}

\begin{proof}[Proof of Proposition \ref{lma:reductionindimension}]
Let $p \in \mathcal{P} (N)$ with $p^{\alpha} \parallel N$.
Assume that $\Phi_N \mid A$, but
\begin{equation}\label{eq:ndivchain}
\Phi_N \cdots \Phi_{N/p^{\alpha}} \nmid A,
\end{equation}
By the contrapostive of Lemma \ref{lma:chaindivflat}, there is a flat cuboid
$$
\Delta_0^{p} (X) = X^{y} \prod_{j \in \mathfrak{J}'} (1 - X^{d_jN/p_j}) \mod X^{N} - 1,
$$
where $\mathfrak{J} := \{j : p_j \mid N \}$ and $(d_j,p_j) = 1$,
such that $\mathbb{A}^N [\Delta_0^{p}] \neq 0$.
In particular, this implies that
$\supp \, A \cap \supp \, \Delta_0^{p} \neq \emptyset,
$
so that without loss of generality we may assume that $y = a \in A$.

Let $y_\nu:= a+\nu N/p$ and
$$
\Delta_{\nu}^{p} (X) : = X^{y_{\nu}} \prod_{j\,  \in \, \mathfrak{J}'} \big(1-X^{d_jN/p_j}\big) \mod X^{N} - 1.
$$
By Lemma \ref{lma:cuboidstackflat}, we have $\mathbb{A}^N [\Delta_{\nu}^{p}] = c$ for each $\nu$ and some constant $c \neq 0$. In particular,
$\supp \, \Delta_{\nu}^{p} \cap \supp \, A \neq \emptyset$ for each $\nu$. Since
$\supp \, \Delta_{\nu}^{p} \subset  \Lambda(y_\nu,pD(N))  \textrm{ for each } \nu = 0,\ldots,p-1,$
this completes the proof.
\end{proof}

\begin{corollary} \label{cor-split}
Assume that $\Phi_N\mid A$. Then there exist a prime $p\mid N$ and elements $a_0, a_1,\dots,a_{p-1}\in A$ such that
\begin{equation}\label{cub-e66}
a_\nu\in \Lambda(y_\nu,pD(N))\hbox{ for }\nu=0,1,\dots,p-1,
\end{equation}
where $y_\nu:= a_0+\nu N/p$.
\end{corollary}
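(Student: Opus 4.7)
The plan is to apply Proposition \ref{lma:reductionindimension} to the restriction $A' := A \cap \Lambda(a, D(N))$, which contains $a$ and satisfies $\Phi_N \mid A'$ by Lemma \ref{grid-split}. The key observation will be that option (2) of the proposition, once it holds for $A'$ with some prime $p \in \mathcal{P}(N)$, immediately yields the desired conclusion \emph{starting from the given} $a$, and not merely from the existential element $x_0 \in A'$ produced by the proposition: since both $x_0$ and $a$ lie in the $D(N)$-grid $\Lambda(a,D(N))$, which is partitioned by the $p$ disjoint cosets $\Lambda(a + \nu N/p, pD(N))$ for $\nu \in \{0,\ldots,p-1\}$, the family $\{\Lambda(x_0 + \nu N/p, pD(N))\}_{\nu=0}^{p-1}$ is only a cyclic relabeling of $\{\Lambda(a + \nu N/p, pD(N))\}_{\nu=0}^{p-1}$. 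Thus nonemptiness of the former cells in $A'$ implies nonemptiness of the latter, and one takes $a_0 := a$ and picks $a_\nu \in A' \cap \Lambda(a + \nu N/p, pD(N)) \subseteq A$ for each $\nu \geq 1$.

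It then remains to show that option (2) of Proposition \ref{lma:reductionindimension} must in fact hold for some $p \in \mathcal{P}(N)$. For this I would pass to the isomorphism $\Lambda(a, D(N)) \simeq \ZZ_r$ with $r = \prod_{p \in \mathcal{P}(N)} p$ the radical of $N$, sending $a + kD(N) \mapsto k \bmod r$; under this isomorphism $A'$ corresponds to some $B \in \calm^+(\ZZ_r)$ with $0 \in \supp B$ and $\Phi_r \mid B$, while the cells $\Lambda(a + \nu N/p, pD(N))$ become exactly the cosets of $p\ZZ_r$ in $\ZZ_r$. The corollary thereby reduces to the claim: whenever $r$ is squarefree and $B \in \calm^+(\ZZ_r)$ satisfies $\Phi_r \mid B$ with $B \neq 0$, there exists a prime $p \mid r$ such that the natural projection $\pi_p \colon \ZZ_r \to \ZZ_p$ satisfies $\pi_p(\supp B) = \ZZ_p$.

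I would prove this reduced claim by induction on $K$, the number of distinct prime factors of $r$. The base case $K = 1$ forces $B(X) \equiv c\,\Phi_p(X) \pmod{X^p - 1}$ for some integer $c \geq 1$, giving $\supp B = \ZZ_p$. For the inductive step $K \geq 2$, apply Proposition \ref{lma:reductionindimension} to $B$ on $\ZZ_r$ itself (taking $N = M = r$; note $D(r) = 1$ so that the cells in option (2) are precisely the cosets of $p\ZZ_r$). If option (2) holds for some prime the claim is immediate, while option (1) for every prime yields $\Phi_{r/p} \mid B$ for each $p \mid r$ (using $\alpha_p = 1$ since $r$ is squarefree). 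In this latter case, fix any $p_0 \mid r$ and form the pushforward $\tilde B \in \calm^+(\ZZ_{r/p_0})$ defined by $\tilde B(y) = \sum_{x \equiv y \,(\mathrm{mod}\,r/p_0)} B(x)$; this $\tilde B$ is nonzero because $B \geq 0$ and $B \neq 0$, and it satisfies $\Phi_{r/p_0} \mid \tilde B$ since $\Phi_{r/p_0}$ divides $X^{r/p_0} - 1$. Applying the inductive hypothesis to $\tilde B$ on $\ZZ_{r/p_0}$, which has $K-1$ prime factors, yields a prime $q \mid r/p_0$ with $\pi_q(\supp \tilde B) = \ZZ_q$; since the projection $\pi_q \colon \ZZ_r \to \ZZ_q$ factors through $\pi_{r/p_0}$ and since $\supp \tilde B = \pi_{r/p_0}(\supp B)$ by nonnegativity of $B$, one concludes $\pi_q(\supp B) = \ZZ_q$, completing the induction.

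The main obstacle, as the outline makes clear, is excluding the scenario where option (1) of Proposition \ref{lma:reductionindimension} holds simultaneously for every prime in $\mathcal{P}(N)$; the inductive projection argument resolves precisely this case by reducing to a multiset on a smaller squarefree cyclic group while preserving both the cyclotomic divisibility and the nonvanishing.
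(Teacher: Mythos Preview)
Your proof is correct. Both your argument and the paper's proceed by induction on the number $K$ of prime factors, but they are organized differently. The paper works directly in $\ZZ_N$: at each step it considers flat cuboids perpendicular to $p_K$ rooted at the given point $a$; if some such evaluation is nonzero, Lemma~\ref{lma:cuboidstackflat} forces elements of $A$ in every $p_K$-slice, while if all vanish one restricts to $A\cap\Lambda(a,D(N'))$ with $N'=(N,M_K)$, obtains $\Phi_{N'}$-divisibility, and recurses. Your route instead first localizes to the $D(N)$-grid through $a$ and passes to the squarefree quotient $\ZZ_r$, then runs the induction there using Proposition~\ref{lma:reductionindimension} as a black box together with a pushforward to $\ZZ_{r/p_0}$. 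Your relabeling observation---that the $p$ cells partitioning $\Lambda(a,D(N))$ are the same collection regardless of which basepoint in the grid is chosen---lets you decouple the conclusion from the particular element produced by the proposition, whereas the paper tracks the basepoint $a$ explicitly through the recursion. Your approach is more modular (it invokes Proposition~\ref{lma:reductionindimension} rather than re-deriving its content from Lemmas~\ref{lma:chaindivflat} and~\ref{lma:cuboidstackflat}) and the reduction to squarefree $r$ makes the inductive step especially clean; the paper's approach is slightly more direct in that it never leaves the original scale $N$.
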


\begin{proof}
We induct on $K$. For $K=1$ and $p_1=p$, if $\Phi_{p^\alpha} \mid A$ for some $1\leq \alpha\leq n_1$, then $A$ is $p$-fibered on that scale, which clearly implies the conclusion.

Assume now that $K\geq 2$ and that the corollary is true with $K$ replaced by $K-1$. Let $N$ and $A$ be as in the statement of Corollary \ref{cor-split}. For $a\in A$, consider the flat cuboids
$$
\Delta_{a;d_1,\dots,d_{K-1}} (X) : = X^{a} \prod_{1\leq j\leq K-1} \big(1-X^{d_jN/p_j}\big) \mod X^{N} - 1,
$$
with $(d_j,p_j)=1$ and $y_\nu:= a+\nu N/p$. We consider two cases.

\begin{itemize}

\item Suppose first that $\mathbb{A}^N[\Delta_{a;d_1,\dots,d_{K-1}}]   \neq 0 $ for some $a\in A$ and $d_1,\dots,d_{K-1}$ as above. Let $y_\nu:= a+\nu N/p$ for $\nu\in\{0,\dots,p_K-1\}$. By Lemma \ref{lma:cuboidstackflat}, we have
$\mathbb{A}^N[\Delta_{y_\nu;d_1,\dots,d_{K-1}}]   \neq 0   $ for all $\nu$, and the conclusion holds with $a_0=a$ and $p=p_K$.

\item Assume now that $\mathbb{A}^N[\Delta_{a;d_1,\dots,d_{K-1}}] = 0   $ for all $a\in A$ and for all $d_1,\dots,d_{K-1}$ as above. Recall that $M_K=M/p_K^{n_K}$, so that $N':=(N,M_K)$ is relatively prime to $p_K$.
Let $A':=A\cap \Lambda(a,D(N'))$ for some $a\in A$. By Lemma \ref{lma:chaindivflat}, we have
$\Phi_{N'} \mid A'$. Since $N'$ has only $K-1$ distinct prime divisors, we may apply the inductive assumption to $A'$ and conclude that (\ref{cub-e66}) holds with $p=p_i$ for some $j\in\{1,\dots,K-1\}$.

\end{itemize}

\end{proof}


\section{Long fibers}
\label{sec-long-fibers}

In this section, we prove a multiscale generalization of the de Bruijn-R\'edei-Schoenberg structure theorem for vanishing sums of roots of unity (Proposition \ref{cuboid}).
Instead of assuming that $A$ has just one cyclotomic divisor, we will assume that $\Phi_L\mid A$ for all $L$ such that $N\mid L \mid M$ for some fixed $N \mid M$. Under that assumption, we prove that we can express $A$ as a linear combination of ``long fibers", which we now define.

\begin{definition}\label{defn:longfibers} {\bf (Long fibers)}
Let $M = \prod_{i=1}^{K} p_i^{n_i}$,  and let $1 \leq \alpha \leq n_i $. We say that a set $F \subset \mathbb{Z}_M$ is a $p_i^\alpha$-\emph{fiber on scale }$M$ if $F \equiv x * F_{i,\alpha}$ mod $M$ for some $x \in \mathbb{Z}_M$, where
$$
F_{i, \alpha} (X) : = \prod_{\nu = 1}^{\alpha} \Phi_{p_i} \big(X^{M/p_{i}^{\nu}}\big) \equiv \frac{X^M - 1}{X^{M/p_i^{\alpha }} - 1}.
$$
We will often refer to $p_i^\alpha$ fibers with $\alpha>1$ as \emph{long fibers} in the $p_i$ direction.
\end{definition}

Explicitly, we have
\begin{equation}\label{long-fiber-long}
F_{i,\alpha } (X)  = 1 + X^{M/p_i^\alpha} +X^{2M/p_i^\alpha} + \cdots + X^{(p_i^\alpha  -1)M/p_i^\alpha}.
\end{equation}
In particular, when $\alpha=1$, the sets $x * F_{i,1}$ are the usual fibers in the $p_i$ direction on scale $M$. The following simple result concerning the cyclotomic divisors of long fibers follows immediately from the definition.

\begin{lemma}\label{lma:longfibersdivisors}
Let $M$ and $\alpha$ be as in Definition \ref{defn:longfibers}.  Then $\Phi_L (X) \mid F_{i,\alpha} (X)$ if and only if $p_i^{n_i - \alpha+1} \mid L \mid M$.
\end{lemma}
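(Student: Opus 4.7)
The plan is to work directly from the closed-form expression for $F_{i,\alpha}$ given in Definition \ref{defn:longfibers}, namely
$$
F_{i,\alpha}(X) = \frac{X^M - 1}{X^{M/p_i^\alpha} - 1},
$$
and combine it with the standard factorization $X^n - 1 = \prod_{d \mid n} \Phi_d(X)$.

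First, I would apply this factorization to both the numerator and the denominator. Since distinct cyclotomic polynomials are irreducible and pairwise coprime in $\mathbb{Q}[X]$, the quotient simplifies to the product of those $\Phi_d$ whose index divides $M$ but does not divide $M/p_i^\alpha$:
$$
F_{i,\alpha}(X) = \prod_{\substack{d \mid M \\ d \,\nmid\, M/p_i^\alpha}} \Phi_d(X).
$$
From this formula, $\Phi_L \mid F_{i,\alpha}$ is equivalent to the combinatorial condition $L \mid M$ and $L \nmid M/p_i^\alpha$.

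Second, I would translate this combinatorial condition into the claimed divisibility relation. Writing the prime factorization $L = \prod_{j=1}^K p_j^{\beta_j}$, the requirement $L \mid M$ amounts to $\beta_j \leq n_j$ for every $j$, while the condition $L \nmid M/p_i^\alpha$ (noting that $M/p_i^\alpha$ has $p_i$-part equal to $p_i^{n_i - \alpha}$ and agrees with $M$ in all other prime parts) is equivalent to $\beta_i > n_i - \alpha$, that is, $\beta_i \geq n_i - \alpha + 1$. Equivalently, $p_i^{n_i - \alpha + 1} \mid L$. Combined with $L \mid M$, this is exactly $p_i^{n_i - \alpha + 1} \mid L \mid M$, yielding the lemma.

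There is no genuine obstacle here: the statement is essentially an unwinding of the definition via the standard cyclotomic factorization. The only thing to be careful about is the edge case behavior of the inequality $\beta_i \geq n_i - \alpha + 1$ when $\alpha = n_i$ (in which case the condition reads $p_i \mid L$) and when $\alpha = 1$ (in which case one recovers the known divisors of a single short fiber, consistent with the discussion preceding Definition \ref{defn:longfibers}).
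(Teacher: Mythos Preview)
Your proof is correct and is exactly the unwinding the paper has in mind: the paper does not give a separate proof but simply notes that the lemma ``follows immediately from the definition,'' and the same computation (express $F_{i,\alpha}$ as $(X^M-1)/(X^{M/p_i^\alpha}-1)$ and read off the cyclotomic factors) appears elsewhere in the paper, e.g.\ in \eqref{cyclo-divisors}. There is nothing to add.
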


\begin{proposition}\label{lma:longfiberslincom}
{\bf(Long fiber decomposition)}
Let $M = \prod_{i=1}^{K} p_i^{n_i}$, and let $N \mid M$ satisfy $N = \prod_{i = 1}^{K} p_i^{n_i - \alpha_i+1}$ with $1 \leq  \alpha_i \leq n_i$. Let $A\in \calm(\ZZ_M)$, and assume that $\Phi_{L} (X) \mid A(X)$ for each $N \mid L \mid M$.  Then, there exist polynomials $P_i (X) \in \mathbb{Z} [X]$ such that
\begin{equation}\label{eq:longfiberslincom}
A(X) = P_1 (X) F_{1, \alpha_1} (X) + \cdots + P_K (X) F_{K, \alpha_K} (X) \mod X^M - 1.
\end{equation}
Moreover, if $A \in \calm^+ (\mathbb{Z}_M)$ and $K = 2$, then we may assume that the polynomials $P_1  (X)$ and $P_2 (X)$ each have non-negative coefficients.
\end{proposition}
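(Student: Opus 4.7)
The plan is to use induction on $\Sigma := \sum_{i=1}^{K}(\alpha_i - 1)$, noting that $\Sigma = 0$ exactly when $N = M$. For the base case $\Sigma = 0$, the only divisibility hypothesis reduces to $\Phi_M \mid A$, and the desired conclusion $A \equiv \sum_i P_i F_{i,1}\pmod{X^M-1}$ is exactly the de Bruijn-R{\'e}dei-Schoenberg statement, i.e.\ the equivalence (i)$\Leftrightarrow$(iii) of Proposition \ref{cuboid}. For the inductive step I would assume without loss of generality that $\alpha_K\geq 2$ and set $N' := Np_K$. Since $\{L: N' \mid L \mid M\} \subseteq \{L: N \mid L \mid M\}$, the inductive hypothesis applied to the exponent tuple $(\alpha_1,\ldots,\alpha_{K-1},\alpha_K-1)$ furnishes integer polynomials $P_1',\ldots,P_K'$ with
\[
A(X) \equiv \sum_{j<K} P_j'(X)\,F_{j,\alpha_j}(X) + P_K'(X)\,F_{K,\alpha_K-1}(X) \pmod{X^M-1}.
\]

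My central sub-claim is then that $P_K'(\zeta_L)=0$ for every $L$ in $\mathcal{S}_0\setminus\mathcal{S}_0'$, where $\mathcal{S}_0:=\{L: N\mid L\mid M\}$ and $\mathcal{S}_0':=\{L: N'\mid L\mid M\}$. Every such $L$ satisfies $p_K^{n_K-\alpha_K+1}\parallel L$ and $p_j^{n_j-\alpha_j+1}\mid L$ for $j\neq K$, so evaluating the decomposition at $\zeta_L$ gives $0 = A(\zeta_L) = P_K'(\zeta_L)\,F_{K,\alpha_K-1}(\zeta_L)$: Lemma \ref{lma:longfibersdivisors} kills the $j<K$ terms, while $F_{K,\alpha_K-1}(\zeta_L)\neq 0$ because $p_K^{n_K-\alpha_K+2}\nmid L$. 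The monic polynomial $H(X):=\prod_{L\in\mathcal{S}_0\setminus\mathcal{S}_0'}\Phi_L(X)$ therefore divides $P_K'$ in $\mathbb{Z}[X]$. The core of the step is the \emph{upgrade}: exploiting the factorization $F_{K,\alpha_K}=F_{K,\alpha_K-1}\cdot \Phi_{p_K}(X^{M/p_K^{\alpha_K}})$ together with the identity $\Phi_{p_K}(X^{M/p_K^{\alpha_K}})=\prod_{L\mid M,\;p_K^{n_K-\alpha_K+1}\parallel L}\Phi_L(X)$, which properly contains $H$, the extra cyclotomic factors appearing in $\Phi_{p_K}(X^{M/p_K^{\alpha_K}})/H$ correspond precisely to those $L$ on which $F_{j,\alpha_j}(\zeta_L)\neq 0$ for some $j<K$. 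I would use this coincidence to absorb the ``defect'' into the $j<K$ summands via an explicit CRT-type rearrangement, producing integer $P_K$ and updated $\widetilde{P}_j$ realizing the desired long fiber decomposition on scale $M$.

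For the positivity when $K=2$, the same inductive scheme runs, but at each stage I would invoke Lemma \ref{structure-thm} to keep the coefficients non-negative. The base case $\alpha_1=\alpha_2=1$ is immediate from Lemma \ref{structure-thm} applied on scale $M$ (which has exactly two prime factors), and in the inductive step every intermediate scale $L \in \mathcal{S}_0$ still has at most two prime factors, so the lemma remains applicable and lets me pick the inductive decomposition with nonneg $P_j'$. The principal technical obstacle I anticipate is the rigorous execution of the upgrade: because the cyclotomic moduli are not coprime in $\mathbb{Z}[X]$, naive CRT over $\mathbb{Q}$ does not automatically yield integer (let alone non-negative) polynomials, and the divisibility $H\mid P_K'$ only cuts out the factors of $\Phi_{p_K}(X^{M/p_K^{\alpha_K}})$ supported on $\mathcal{S}_0$, not all of them. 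Resolving this will likely require a careful choice of a ``direction assignment'' on the extra scales, together with explicit use of the monomial structure of $\Phi_{p_K}(X^{M/p_K^{\alpha_K}})$ and of the single-scale decompositions supplied by Proposition \ref{cuboid}(iii) and Lemma \ref{structure-thm}.
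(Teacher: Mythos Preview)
Your inductive scheme stalls at the ``upgrade'' step, and you have correctly identified this as the crux. From $H\mid P_K'$ you do not get $\Phi_{p_K}(X^{M/p_K^{\alpha_K}})\mid P_K'$; the missing cyclotomic factors are exactly the $\Phi_L$ with $p_K^{n_K-\alpha_K+1}\parallel L$ but $p_j^{n_j-\alpha_j+1}\nmid L$ for some $j<K$, and absorbing the remainder $P_K'F_{K,\alpha_K-1}-P_KF_{K,\alpha_K}$ into the $j<K$ summands is not a bookkeeping matter: it is equivalent to showing that $P_K'F_{K,\alpha_K-1}$ lies in the $\ZZ[X]$-ideal generated by $F_{1,\alpha_1},\dots,F_{K,\alpha_K}$, which is the statement you are trying to prove. (Indeed, your sub-claim gives $G:=\prod_{N\mid L\mid M}\Phi_L \mid H\,F_{K,\alpha_K-1}$, so everything reduces to showing $G$ lies in that ideal --- and the induction has not advanced that question.) The positivity plan has a further problem: Lemma~\ref{structure-thm} is a single-scale statement producing nonnegative combinations of ordinary fibers $F_i^L$, not of long fibers $F_{i,\alpha_i}$, and there is no mechanism in your outline that keeps the coefficients nonnegative once you start rearranging terms in the upgrade.

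The paper sidesteps all of this with a direct, non-inductive argument. It observes (via Lemma~\ref{lma:longfibersdivisors}) that $G=\gcd(F_{1,\alpha_1},\dots,F_{K,\alpha_K})$ in $\QQ[X]$, writes $G$ as a $\QQ[X]$-combination of the $F_{i,\alpha_i}$ by B\'ezout, upgrades the coefficients to $\ZZ[X]$ by Gauss's Lemma (the $F_{i,\alpha_i}$ are primitive), and then uses $G\mid A$. For the nonnegativity when $K=2$, the paper does not use Lemma~\ref{structure-thm} at all: it restricts $A$ to a single grid $\Lambda(0,D(N))=F_{1,\alpha_1}*F_{2,\alpha_2}$, writes the weights on that grid as $w_A(s+t)=w_s+v_t$ with $s\in F_{2,\alpha_2}$, $t\in F_{1,\alpha_1}$, and then shifts all $w_s$ up and all $v_t$ down by a common constant determined by the minimal weight --- an explicit normalization that produces $P_1,P_2\geq 0$ directly.
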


\begin{proof}
Let $G(X) : = \prod_{L : N \mid L \mid M} \Phi_L (X)$. Because each of the polynomials $\Phi_L (X)$ is irreducible in $\mathbb{Q}[X]$, we know that $G(X) \mid A(X)$. Hence, it suffices to show that $G$ satisfies \eqref{eq:longfiberslincom} for some polynomials $P_1,\ldots,P_K$ with integer coefficients.

\noindent
We first use Lemma \ref{lma:longfibersdivisors} to observe that
$
G(X) = \gcd (F_{1,\alpha_1},\ldots,F_{K,\alpha_K})
$
in the polynomial ring $\mathbb{Q}[X]$. This follows since
\begin{eqnarray*}
\Phi_L (X) \mid G(X) & \Leftrightarrow & N \mid L \mid M  \\[1ex]
\quad &\Leftrightarrow & L \mid M \textrm{ while } L \nmid \frac{M}{p_i^{\alpha_i}} \textrm{ for all } i = 1,\ldots,K.
\end{eqnarray*}
So, there necessarily exist polynomials $P_1,\ldots,P_K \in \mathbb{Q}[X]$ such that
\begin{equation}\label{eq:pregauss}
G(X) = P_1 (X) F_{1,\alpha_1} (X) + \cdots + P_K (X) F_{K,\alpha_K} (X) \mod X^M - 1.
\end{equation}
Moreover, the polynomials $F_{1,\alpha_1},\ldots,F_{K, \alpha_K}$ all have integer coefficients and are primitive (since they are the product of cyclotomic polynomials, which are monic polynomials). Hence, an application of Gauss's Lemma (see \cite[Chapter 3]{winkler}) implies that each of the polynomials $P_1,\ldots,P_K$ in \eqref{eq:pregauss} can be taken to have integer coefficients. Again, using that $G(X)$ is a divisor of $A(X)$, we obtain that $A$ is, itself, expressible as a linear combination of long fibers with integer coefficients.

It remains to show when $K = 2$ (so that $M = p_1^{n_1}p_2^{n_2}$ and $N = p_1^{n_1 - \alpha_1 + 1} p_2^{n_2 - \alpha_2 +1}$) that we can take $P_1$ and $P_2$ in \eqref{eq:longfiberslincom} to have non-negative integer coefficients. For this, we adapt the proof of Proposition 3.8(b) of \cite{KMSV} to the setting of long fibers.

We can assume that $A = A \cap \Lambda$ where $\Lambda = \Lambda (x,D(N))$ for some $x \in \mathbb{Z}_M$. This follows because, for each $N \mid L \mid M$, Lemma \ref{grid-split} guarantees that $\Phi_L (X) \mid (A \cap \Lambda)(X)$. To further simplify, let us assume that $x = 0$. We now make the observation that (as multisets) $\Lambda = F_{1,\alpha_1} * F_{2,\alpha_2}$. This follows because
$$
(F_{1,\alpha_1} *F_{2,\alpha_2})(X) = F_{1,\alpha_1} (X) F_{2,\alpha_2} (X) = \bigg(\prod_{\nu = 1}^{\alpha_1} \Phi_{p_1} \big(X^{M/p_{1}^{\nu}}\big) \bigg) \bigg(\prod_{\nu = 1}^{\alpha_2} \Phi_{p_2} \big(X^{M/p_{2}^{\nu}}\big)\bigg)
$$
so that $\gcd (M/p_1^{\alpha_1}, M/p_2^{\alpha_2}) \mid y$ for every $y \in F_{1,\alpha_1} * F_{2,\alpha_2}$. But
$$
\gcd\left(\frac{M}{p_1^{\alpha_1}}, \frac{M}{p_2^{\alpha_2}} \right) = p_1^{n_1-\alpha_1} p_2^{n_2-\alpha_2} = \frac{N}{p_1 p_2} = D(N),
$$
and so $F_{1,\alpha_1} *F_{2,\alpha_2} \subset \Lambda$.
The reverse containment follows similar reasoning, and is a consequence of the Chinese Remainder Theorem. We leave the details to the reader.

Having now shown that $\Lambda (0,D(N)) = F_{1,\alpha_1} * F_{2,\alpha_2}$ while also showing that we can assume that $A \cap \Lambda(0,D(N)) = A$. Together, this implies that there exist integers $w_s, v_t$ such that
\begin{align}\label{eq:convolutiongrid}
A(X) = \sum\limits_{s \in F_{2,\alpha_2}} w_sX^s F_{1, \alpha_1} (X) + \sum\limits_{t \in F_{1,\alpha_1}} v_t X^t F_{2,\alpha_2} (X)  &\mod X^M - 1,
\end{align}
and that $w_s + v_t \geq 0$ whenever
\begin{equation}\label{eq:longfiberscross}
(\{s\} * F_{1,\alpha_1}) \cap (\{t\} * F_{2,\alpha_2}) \neq \emptyset.
\end{equation}
We claim that there exists a modification $w_s',v_t'$ of each of these coefficients $w_s, v_t$ such that $w_s'+ v_t' = w_s + v_t$ for all pairs of $s,t$ as in \eqref{eq:longfiberscross} and so that also $w_s', v_t' \geq 0$ for every $s,t$.

To this end, let
$
e : = \min_{x \in \Lambda} w_A^M (x)
$
be the minimal weight of the multiset $A$ in $\mathbb{Z}_M$ and choose $s_0 \in F_{2, \alpha_2}$ and $t_0 \in F_{1,\alpha_1}$ so as to satisfy
$
e = w_{s_0} + v_{t_0}.
$
We then let $w_s' : = w_s + v_{t_0}$ and $v_t' = (w_{s_0} + v_t) - e$ for each $s \in F_{2,\alpha_2}$ and $t \in F_{1,\alpha_1}$. Clearly, then, $v_t' \geq 0$ for every $t \in F_{1,\alpha_1}$, since $e$ was chosen to be the minimal value of the weights of $A$. Moreover, $w_s' \geq 0$, since
$$
w_s' = w_s + v_{t_0} \geq w_{s_0} + v_{t_0} = e
$$
and $e \geq 0$, since we are assuming that $A$ is a non-negative multiset. Finally, we notice that
$$
w_s' + v_t' = \big((w_{s_0} + v_t) - e\big) + w_s + v_{t_0} = w_s + v_t.
$$
Hence, letting
$$
P_1 (X) : = \sum\limits_{s \in F_{2,\alpha_2}} w_s' X^s, \, P_2 (X) : = \sum\limits_{t \in F_{1,\alpha_1}} v_t' X^t,
$$
we see from \eqref{eq:convolutiongrid} that
$$
A(X) = P_1 (X) F_{1,\alpha_1} (X) + P_2 (X) F_{2,\alpha_2} (X) \mod X^M - 1,
$$
and that $P_1, P_2 \in \mathbb{Z}[X]$ both have non-negative coefficients.
\end{proof}


\section{Fibered lower bound: positive results}\label{sec-fib-bound-works}

\subsection{Two prime divisors}\label{sec-2primes}

In this section we work in $\ZZ_M$, where $M=p_1^{n_1}p_2^{n_2}$ has two distinct prime divisors. To simplify the notation, we will abbreviate $p:=p_1$ and $q:=p_2$. We will continue to use the numerical indices where appropriate, so that for example $F^N_1$ will still denote a fiber in the $p$ direction on scale $N$, $F_2^N$ will dennote a fiber in the $q$ direction, and $\capexp_1(S)$ will continue to denote the set of exponents of $p=p_1$ in $S$. We recall here that the exponent sets $\capexp_i(S)$ were defined in (\ref{eq-defexp}).

We first mention a special case resolved in \cite{LM}.

\begin{theorem}\cite[Theorem 1]{LM}
Let $M = p^{n_1} q^{n_2}$ and $A\in\calm^+(\ZZ_M)$. Let $S\subset\cald(M)$ be nonempty, and assume that $\Phi_s \mid A$ for all $s\in S$. Assume further that $q \nmid \vert A \vert$. Then
$$
|A|\geq p^{E_1}  = \capfib (S, 1)  \geq  \capfib (S).
$$
\end{theorem}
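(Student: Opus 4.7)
My plan is to combine the truncation procedure of Proposition \ref{prop:truncation} with an induction on $E_1$, in which the two-prime structure theorem (Lemma \ref{structure-thm}) is applied at a carefully chosen top scale $s^*\in S$. First I would invoke Proposition \ref{prop:truncation} to reduce to the normalized setting $M=p^{E_1}q^{E_2}$ with $\capexp_1(S)=\{1,\ldots,E_1\}$ and $\capexp_2(S)=\{1,\ldots,E_2\}$; truncation preserves $|A|$ and hence the hypothesis $q\nmid|A|$. The equalities $p^{E_1}=\capfib(S,1)\geq\capfib(S)$ in the display follow at once from Proposition \ref{prop-multifibered}, so only the inequality $|A|\geq p^{E_1}$ requires proof. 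As a preliminary reduction, $S$ cannot contain any pure $q$-power: otherwise $\Phi_{q^\beta}\mid A$ would force $q=\Phi_{q^\beta}(1)$ to divide $|A|$. Hence every $s\in S$ satisfies $p\mid s$, so $\sigma\equiv 1$ is a valid assignment function and the ``$p$-fibered direction'' is indeed the relevant one.

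For the induction, the base case $E_1=0$ gives $|A|\geq 1=p^0$. In the inductive step, I would pick $s^*=p^{E_1}q^{\beta^*}\in S$ with $p^{E_1}\| s^*$, and apply Lemma \ref{structure-thm} at scale $s^*$ to write
\[
A(X)\equiv P_1(X)F_1^{s^*}(X)+P_2(X)F_2^{s^*}(X)\pmod{X^{s^*}-1},
\]
with $P_1,P_2$ of non-negative integer coefficients, interpreted as non-negative multisets in $\calm^+(\ZZ_{s^*/p})$ and $\calm^+(\ZZ_{s^*/q})$. Setting $X=1$ gives the mass identity $|A|=pP_1(1)+qP_2(1)$, so the hypothesis $q\nmid|A|$ forces $q\nmid P_1(1)$, in particular $P_1(1)\geq 1$. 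To close the induction, I would transfer cyclotomic divisibilities to $P_1$: for $s\in S$ with $s\mid s^*$ and $\zeta$ a primitive $s$-th root of unity, a direct computation gives $F_1^{s^*}(\zeta)=p$ if $p^{E_1}\nmid s$ (and $0$ otherwise), and analogously for $F_2^{s^*}(\zeta)$ with respect to $q^{\beta^*}$. Substituting into $A(\zeta)=0$ and isolating the case $p^{E_1}\nmid s$ with $q^{\beta^*}\mid s$ yields $P_1(\zeta)=0$, i.e., $\Phi_s\mid P_1$. Writing $S_{P_1}$ for the collection of such $s$, if $\capexp_1(S_{P_1})=\{1,\ldots,E_1-1\}$, then the inductive hypothesis applied to $P_1\in\calm^+(\ZZ_{p^{E_1-1}q^{\beta^*}})$ yields $P_1(1)\geq p^{E_1-1}$, and therefore $|A|\geq p\cdot p^{E_1-1}=p^{E_1}$.

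\textbf{Main obstacle.} The delicate point is that $S_{P_1}$ only records those $s\in S$ whose $q$-exponent equals the single level $\beta^*$, so $\capexp_1(S_{P_1})$ may miss some exponent in $\{1,\ldots,E_1-1\}$ that is witnessed in $S$ only by elements at other $q$-levels. I anticipate resolving this via a companion reduction on $E_2$: if some $\alpha\in\{1,\ldots,E_1\}$ fails to be witnessed by any $s\in S$ with $v_q(s)=E_2$, then the projection $A\mapsto A\bmod(M/q)$ preserves $|A|$, the hypothesis $q\nmid|A|$, and the full $\capexp_1$ set, while strictly decreasing $E_2$, triggering an auxiliary induction on $E_2$. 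The argument then alternates between (i) decomposing at $s^*$ with $\beta^*=E_2$ when the top $q$-level captures every necessary $p$-exponent, and (ii) projecting modulo $M/q$ when it does not, to close the double induction on $E_1+E_2$. Arranging this bookkeeping so that one of the two reductions always applies, and so that the inductive assumptions remain in force throughout, is the main technical hurdle I expect in the proof.
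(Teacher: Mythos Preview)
This theorem is quoted from \cite{LM} and not proved in the paper, so there is nothing here to compare your argument against; I can only assess your sketch on its own merits.

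The preliminary reductions are sound: truncation preserves $|A|$ and hence $q\nmid|A|$; no pure $q$-power can lie in $S$; at any chosen $s^*\in S$ the two-prime structure lemma gives $|A|=pP_1(1)+qP_2(1)$ with $q\nmid P_1(1)$; and your root-of-unity computation correctly yields $\Phi_s\mid P_1$ whenever $s\in S$ satisfies $s\mid s^*$, $p^{E_1}\nmid s$ and $q^{\beta^*}\mid s$. The gap is exactly where you flag it, but the proposed fix does not close it. In branch (ii) you assert that if some $\alpha$ is not witnessed at $q$-level $E_2$ then projecting $A\mapsto A\bmod(M/q)$ ``preserves the full $\capexp_1$ set''. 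That is false: the projection discards every $s\in S$ with $v_q(s)=E_2$, and nothing prevents some \emph{other} exponent $\alpha'$ from being witnessed only at that top level. Concretely, after truncation take $E_1=E_2=2$ and $S=\{pq^2,\,p^2q\}$. Branch (i) fails since $\alpha=2$ is not witnessed at level $E_2=2$; you therefore project, but only $p^2q$ survives, the exponent $\alpha=1$ is lost, and you obtain merely $|A|\geq p$. Your dichotomy (i)/(ii) is simply not exhaustive: the missing case is the ``anti-diagonal'' configuration where each $q$-level witnesses only a proper subset of the $p$-exponents and no single level captures them all. Closing the induction requires a different organizing invariant than slicing $S$ at a single $q$-level, and your sketch does not supply one.
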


In general, the assumption that $q \nmid \vert A \vert$ cannot be dropped. However, we are able to do that in the following special case.

\begin{proposition}\label{prop-expdown}
Let $M = p^{n_1} q^{n_2}$ and $A\in\calm^+(\ZZ_M)$. Let $S\subset\cald(M)$ be nonempty, and assume that $\Phi_s \mid A$ for all $s\in S$.
Assume further that the following holds for all
$\alpha,\alpha'\in \capexp_1(S)$ and $\beta,\beta'\in \capexp_2(S)$ with $\alpha'<\alpha$ and $\beta'<\beta$:
\begin{equation}\label{e-expdown}
\hbox{ if }p^{\alpha}q^{\beta}\in S\hbox{ and }
p^{\alpha'}q^{\beta'}\not\in S,\hbox{ then }
\{p^{\alpha'}q^{\beta},
p^{\alpha}q^{\beta'}\}\cap S\neq\emptyset.
\end{equation}
Then $|A|\geq\capfib(S)$.

\end{proposition}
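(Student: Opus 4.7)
My strategy is to reduce the problem via truncation and then proceed by induction. First, I would apply Proposition \ref{prop:truncation} to reduce to the normalized setting $\capexp_1(S)=\{1,\dots,E_1\}$, $\capexp_2(S)=\{1,\dots,E_2\}$ with $A\in\calm^+(\ZZ_{p^{E_1}q^{E_2}})$. Condition (\ref{e-expdown}) is invariant under truncation because it refers only to the relative order of exponents, and both $|A|$ and $\capfib(S)$ are preserved.

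I would then induct on $E_1+E_2$. The base case ($E_1=0$ or $E_2=0$) is covered by Lemma \ref{lemma-prime-power}. For the inductive step, fix an optimal assignment $\sigma^*$ achieving $\capfib(S)=p^{E_1^*}q^{E_2^*}$; by the symmetry between $p$ and $q$ I may assume that some scale $s_0=p^{E_1}q^{\beta_0}\in S$ satisfies $\sigma^*(s_0)=1$, so that the top $p$-exponent is genuinely ``used" by $\sigma^*$. The goal is to construct a reduced set $S'\subset\cald(p^{E_1-1}q^{E_2})$ still satisfying (\ref{e-expdown}) and with $\capfib(S')=\capfib(S)/p$, together with a reduced multiset $A'\in\calm^+(\ZZ_{p^{E_1-1}q^{E_2}})$ with $|A'|=|A|/p$ and $\Phi_{s'}\mid A'$ for every $s'\in S'$. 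The inductive hypothesis then yields $|A|=p\cdot|A'|\geq p\cdot\capfib(S')=\capfib(S)$.

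The key analytic ingredient is Lemma \ref{structure-thm}, which provides a non-negative fiber decomposition $A\equiv P_1F^{s_0}_1+P_2F^{s_0}_2\pmod{X^{s_0}-1}$ on scale $s_0$. Combining this with Lemma \ref{grid-split} (to reduce to a single $D(s_0)$-grid) and the multiscale cuboid argument of Corollary \ref{cor-split} (to force compatible $p$-fibering across all such grids), I plan to extract a complete $p$-fiber layer of total mass $|A|/p$, from which the reduced multiset $A'$ is obtained by dividing out $F^{s_0}_1$ and compressing the $p$-direction. The hypothesis (\ref{e-expdown}) enters at this stage to rule out configurations where the $q$-fibered contribution obstructs the extraction: the staircase structure guarantees that enough intermediate scales lie in $S$ to propagate the fibered structure through the reduction.

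The main obstacle is the combinatorial bookkeeping that ties together the algebraic reduction and the optimality of the assignment function. In particular, $\capfib(S')\geq\capfib(S)/p$ does not hold automatically after an arbitrary removal: if $s_0$ is chosen carelessly, the resulting $S'$ may admit an assignment strictly better than $\sigma^*|_{S'}$, collapsing $\capfib(S')$ below $\capfib(S)/p$ and breaking the induction. The crucial combinatorial lemma I will need is that, under hypothesis (\ref{e-expdown}), one can always select $s_0$ so that $\sigma^*|_{S'}$ remains optimal on $S'$ and $S'$ continues to satisfy (\ref{e-expdown}); proving this by a case analysis on the location of $s_0$ in the exponent grid, exploiting the L-shaped intermediate points forced by (\ref{e-expdown}), is where I expect the bulk of the work to lie.
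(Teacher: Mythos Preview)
Your strategy diverges from the paper's in two essential ways, and the second of these contains a genuine gap.

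First, the paper inducts on $|S|$ and works at the \emph{minimal} exponents $\alpha_0=\min\capexp_1(S)$, $\beta_0=\min\capexp_2(S)$, not at the top. Its Case~1 runs as follows: if some $s\in S$ with $p^{\alpha_0}\parallel s$ admits a $p$-fiber in $A\bmod s$, then the $p$ fiber elements lie in distinct $p^{\alpha_0}$-cosets, and one sets $A_\nu:=A\cap\Lambda(a_\nu,p^{\alpha_0})$. Because $\alpha_0$ is minimal, $p^{\alpha_0}\mid D(s')$ for every $s'\in S':=\{s'\in S:p^{\alpha_0+1}\mid s'\}$, so Lemma~\ref{grid-split} gives $\Phi_{s'}\mid A_\nu$ and the inductive hypothesis applies to each piece. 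This is an \emph{additive} splitting: $|A|\geq\sum_\nu|A_\nu|\geq p\cdot\capfib(S')$. Case~2 (when no such minimal-level fiber exists in either direction and $p^{\alpha_0}q^{\beta_0}\notin S$) terminates the induction directly by exhibiting an explicit assignment $\sigma$ with $\capfib(S,\sigma)\mid|A|$; condition~(\ref{e-expdown}) is used \emph{only here}, to guarantee that $\sigma$ is well-defined on all of $S$.

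Second, your ``extraction of a complete $p$-fiber layer of total mass $|A|/p$'' is the step that does not go through with the tools you cite. Lemma~\ref{structure-thm} gives a nonnegative decomposition of $A\bmod s_0$ into $p$- and $q$-fibers; it does not let you factor $A$ itself (in $\ZZ_M$) as $A'*F^{s_0}_1$, and in general no such $A'$ with $|A'|=|A|/p$ exists --- indeed $p$ need not divide $|A|$ at all. Corollary~\ref{cor-split} produces, for each $a\in A$, a fiber in \emph{some} direction (possibly $q$), so it cannot ``force compatible $p$-fibering across all grids.'' More structurally, working at the top exponent $E_1$ forfeits exactly the divisibility $p^{\alpha_0}\mid D(s')$ that makes Lemma~\ref{grid-split} bite; there is no analogous mechanism for propagating cyclotomic divisors through a top-down compression of the $p$-coordinate. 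The combinatorial lemma you flag at the end (that the reduced $S'$ still has $\capfib(S')=\capfib(S)/p$) is a real issue too, but it is downstream of the extraction step, which fails first.
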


\begin{proof}
We proceed by induction on $|S|$. The case $|S|=1$ follows from Lemma \ref{structure-thm}.
Assume now that $|S|\geq 2$, and that the lemma is true with $S$ replaced by any $S'$ such that $|S'|<|S|$.
Let $\alpha_0=\min (\capexp_1(S))$ and $\beta_0=\min (\capexp_2(S))$.
Let $A\in\calm^+(\ZZ_M)$ satisfy $\Phi_s \mid A$ for all $s\in S$.

\noindent{\bf Case 1.}
Assume first that there exists some $s\in S$  such that $p^{ \alpha_0}\parallel s$ and $A$ mod $s$ contains a fiber $a*F_1^{s}$. For each $\nu\in\{0,1,\dots,p-1\}$, let
$a_\nu\in A$ satisfy $a_\nu\equiv a+\nu s/p$ mod $s$, and let
$$
A_\nu:= A\cap \Lambda(a_\nu,p^{\alpha_0}).
$$
Let $\pi_{s'}$ denote the natural projection from $\ZZ_M$ to $\ZZ_{s'}$.
Then each $A_\nu(X)$ is divisible by $\Phi_{s'}$ for all $s'\in S':=\{s'\in S:\ p^{\alpha_0+1} \mid s'\}$ since  $\pi_{s'}(A_{\nu}(X))$ is the union of $D(s')$-grids for $s' \in S'$ so the claim follows from Lemma \ref{grid-split}. By the inductive assumption, there are assignment functions $\sigma_\nu:S'\to\{1,2\}$ such that $|A_\nu| \geq \capfib (S',\sigma_\nu)$. We now define $\sigma$ as follows:
choose $\mu\in\{1,\dots,p_i\}$ such that $ \capfib (S',\sigma_\mu)$ is smallest.
For $s'\in S'$, we let $\sigma(s'):=\sigma_\mu(s')$ with that $\mu$. We complete the choice of $\sigma$ by letting $\sigma(s'):=1$ for all $s'$ with $p^{\alpha_0}\parallel s'$.  Then
$$
|A|\geq \sum_\nu |A_\nu| \geq p\cdot\capfib (S',\sigma_\mu) = \capfib (S,\sigma).
$$

Clearly, the same argument works if the assumptions of Case 1 are satisfied with $p$ and $q$ interchanged (so that for some $s\in S$ we have
$q^{ \beta_0}\parallel s$ and $A$ mod $s$ contains a fiber $a*F_2^{s}$). Furthermore, if
$s_0:=p^{\alpha_0}q^{\beta_0}\in S$, then the assumptions of Case 1 hold with $s=s_0$ for some permutation of $p$ and $q$, since $A$ mod $s_0$ has to contain a fiber in at least one direction.

\noindent{\bf Case 2.}
We now consider the complementary case when $s_0\not\in S$ and:
\begin{itemize}
\item[(i)] if $s\in S$ and $p^{\alpha_0}\parallel s$, then $A$ mod $s$ is fibered in the $q$ direction,

\item[(ii)] if $s\in S$ and $q^{\beta_0}\parallel s$, then $A$ mod $s$ is fibered in the $p$ direction,
\end{itemize}

It follows from (i) and (ii) that $|A|$ is divisible by $p^mq^n$, where $m=\#\{s\in S:\ q^{\beta_0}\parallel s\}$ and $n=\#\{s\in S:\ p^{\alpha_0}\parallel s\}$

We define an assignment function as follows. Let $s=p^\alpha q^\beta \in S$. By (\ref{e-expdown}) with $\alpha'=\alpha_0$ and $\beta'=\beta_0$, at least one of
$p^{\alpha_0}q^{\beta},
p^{\alpha}q^{\beta_0}$ is in $S$. We let $\sigma(s)=1$ if $p^{\alpha}q^{\beta_0}\in S$, and $\sigma(s)=2$ if $p^{\alpha}q^{\beta_0}\not\in S$ but $p^{\alpha_0}q^{\beta}\in S$. Then
$$
\capfib(S;\sigma) = p^mq^n\mid  |A|,
$$
and the proposition is proved.

\end{proof}

\begin{corollary}\label{cor-2exponents}
Let $M = p^{n_1} q^{n_2}$ and $A\in\calm^+(\ZZ_M)$. Let $S\subset\cald(M)$ be nonempty, and assume that $\Phi_s \mid A$ for all $s\in S$. Assume further that $|\capexp_i(S) |\leq 2$ for some $i\in\{1,2\}$. Then
$|A|\geq\capfib(S)$.

\end{corollary}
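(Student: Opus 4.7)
The plan is to induct on $|S|$ while mimicking the Case 1 / Case 2 split used in the proof of Proposition \ref{prop-expdown}. Without loss of generality, assume $|\capexp_1(S)| \leq 2$. If $|\capexp_1(S)| \leq 1$, then condition (\ref{e-expdown}) is vacuously satisfied (there are no pairs $\alpha' < \alpha$ in $\capexp_1(S)$), so Proposition \ref{prop-expdown} yields $|A| \geq \capfib(S)$ immediately. The real work is the case $|\capexp_1(S)| = 2$; write $\capexp_1(S) = \{\alpha_0, \alpha_1\}$ with $\alpha_0 < \alpha_1$.

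In the Case 1-type scenario, there exists $s \in S$ with $p^{\alpha_0} \parallel s$ such that $A \bmod s$ contains a $p$-fiber (or the analogous statement with $q$ and $\beta_0$). In the $p$-version, I would split $A$ into $p$ multisets $A_\nu := A \cap \Lambda(a_\nu, p^{\alpha_0})$; by Lemma \ref{grid-split}, each satisfies $\Phi_{s'} \mid A_\nu$ for every $s' \in S' := \{s' \in S : p^{\alpha_0+1} \mid s'\}$. Since $|\capexp_1(S')| \leq 1$, the already-settled trivial case applies to each $A_\nu$, giving $|A_\nu| \geq \capfib(S')$ and hence $|A| \geq p \cdot \capfib(S')$. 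Extending any optimal assignment for $S'$ to $S$ by sending each $s \in S \setminus S'$ to the prime $p$ yields a $\sigma$ with $\capfib(S, \sigma) = p \cdot \capfib(S')$, so $|A| \geq \capfib(S)$. The $q$-version is analogous but invokes the induction hypothesis on $|S|$, since $|\capexp_1|$ is unchanged under a $q$-split.

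In the Case 2-type scenario, no such fiber exists at the minimum scale, so (reproducing the reasoning of Case 2 in Proposition \ref{prop-expdown}) we have $s_0 = p^{\alpha_0} q^{\beta_0},\, p^{\alpha_0},\, q^{\beta_0} \notin S$; moreover, for every $s = p^{\alpha_0} q^\beta \in S$ the multiset $A \bmod s$ is $q$-fibered, so $\Phi_{q^\beta} \mid A$, and for every $s = p^\alpha q^{\beta_0} \in S$ it is $p$-fibered, so $\Phi_{p^\alpha} \mid A$. The subcase $\capexp_2(S) = \emptyset$, where $S$ consists of pure $p$-powers, is handled immediately by Lemma \ref{lemma-prime-power}; otherwise $\beta_0$ is defined, and the exclusions above force $p^{\alpha_1} q^{\beta_0} \in S$, so in particular $\Phi_{p^{\alpha_1}} \mid A$.

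Set $A_1' := \{\alpha \geq 1 : \Phi_{p^\alpha} \mid A\}$ and $B_2' := \{\beta \geq 1 : \Phi_{q^\beta} \mid A\}$. Since distinct prime-power cyclotomic polynomials are pairwise coprime, $p^{|A_1'|} q^{|B_2'|}$ divides $|A|$. Define $\sigma(s) := 1$ if the $p$-exponent of $s$ lies in $A_1'$, and $\sigma(s) := 2$ otherwise. To see $\sigma$ is a valid assignment function, note that if $\sigma(s) = 2$ then the $p$-exponent of $s$ is either $0$ or $\alpha_0$ (since $\alpha_1 \in A_1'$ rules out $\alpha_1$): in the first case $s = q^\beta \in S$ directly forces $\beta \in B_2'$, while in the second $s = p^{\alpha_0} q^\beta$ with $\beta \geq 1$, and $A \bmod s$ being $q$-fibered also gives $\beta \in B_2'$. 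Thus $\capexp_1(S, \sigma) \subseteq A_1'$ and $\capexp_2(S, \sigma) \subseteq B_2'$, and
\[
\capfib(S) \;\leq\; \capfib(S, \sigma) \;\leq\; p^{|A_1'|} q^{|B_2'|} \;\leq\; |A|.
\]
The main obstacle is this second case: condition (\ref{e-expdown}) can genuinely fail when $|\capexp_1(S)| = 2$, so Proposition \ref{prop-expdown} does not apply directly. The key new input is that the fiber structures in Case 2 produce enough prime-power cyclotomic divisors of $A$ to close the argument through the explicit $\sigma$ above.
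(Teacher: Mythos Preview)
Your Case 2 argument is solid: you correctly observe that the Case~2 hypotheses exclude $p^{\alpha_0}$, $q^{\beta_0}$, and $s_0$ from $S$, whence (since $|\capexp_1(S)|=2$) the only element of $S$ with $q$-exponent $\beta_0$ must be $p^{\alpha_1}q^{\beta_0}$, giving $\Phi_{p^{\alpha_1}}\mid A$; your explicit assignment $\sigma$ built from the actual prime-power divisors of $A$ then does the job.

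The gap is in Case~1. When you extend an optimal assignment $\sigma'$ on $S'=\{s'\in S:p^{\alpha_0+1}\mid s'\}$ to all of $S$ by ``sending each $s\in S\setminus S'$ to the prime $p$,'' this is not a legal assignment function whenever $S\setminus S'$ contains a pure $q$-power: an assignment function must satisfy $p_{\sigma(s)}\mid s$. If you instead send such $q^\beta$ to $q$, the equality $\capfib(S,\sigma)=p\cdot\capfib(S')$ breaks, because each such $\beta$ not already in $\capexp_2(S',\sigma')$ contributes an extra factor of $q$. Concretely, take $S=\{p^2,\,q,\,pq^2\}$, so $|\capexp_1(S)|=2$ and $q=q^{\beta_0}\in S$ forces the $q$-version of Case~1. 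After the $q$-split, $S''=\{pq^2\}$ and $\capfib(S'')=\min(p,q)$, so your argument yields only $|A|\geq q\min(p,q)$. But $\capfib(S)=\min(p^2q,\,pq^2)=pq\min(p,q)$, strictly larger; the missing factor of $p$ comes from $p^2\in S\setminus S''$, and $\Phi_{p^2}$ does \emph{not} transfer to the pieces $A_\nu=A\cap\Lambda(a_\nu,q)$ via Lemma~\ref{grid-split}, since a $q$-grid is not a union of $D(p^2)$-grids. The same issue arises symmetrically in the $p$-version. The paper's proof takes a different route: it simply checks that condition~(\ref{e-expdown}) holds under the hypothesis $|\capexp_2(S)|\leq 2$ and then invokes Proposition~\ref{prop-expdown} once, rather than re-running the Case~1/Case~2 induction by hand.
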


\begin{proof}
Without loss of generality, we may assume that $i=2$. If $|\capexp_2(S) |=1$, then
 (\ref{e-expdown}) holds vacuously. (In the language of the proof of Proposition \ref{prop-expdown}, we are in Case 1 throughout the inductive argument.)

 Assume now that $|\capexp_2(S) |=2$. We need to prove that (\ref{e-expdown}) holds in this case. Let $\alpha,\alpha'\in \capexp_1(S)$ and $\beta,\beta'\in \capexp_2(S)$ with $\alpha'<\alpha$ and $\beta'<\beta$. Note that we must have $\capexp_2(S)=\{\beta',\beta\}$. Therefore, if $p^{\alpha'}q^{\beta'}\not\in S$, we must have $p^{\alpha'}q^{\beta}\in S$, and (\ref{e-expdown}) holds again.
 \end{proof}

\begin{proposition}\label{3divisors}
Let $A\in\calm^+(\ZZ_{M})$ with $M=p^{n_1}q^{n_2}$. Assume that $\Phi_{p^{m_1}}\cdots \Phi_{p^{m_r}}\Phi_{p^\alpha q^\beta}\Phi_{q^\gamma} \mid A$ for some $1\leq \alpha<m_1<\dots<m_r\leq n_1$ and $1\leq\beta,\gamma \leq n_2$. Assume further that $\beta\neq\gamma$. Then $|A|\geq p^rq\min(p,q)$. In particular, $|A|\geq\capfib(S)$.
\end{proposition}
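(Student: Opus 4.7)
My plan is to obtain this proposition as a direct consequence of Corollary~\ref{cor-2exponents}, with the explicit bound $p^r q \min(p,q)$ falling out of a short computation of $\capfib(S)$. The key observation is that the condition $\beta \neq \gamma$ is exactly what forces $|\capexp_2(S)| = 2$, which puts us in the regime covered by that corollary.

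The first step is to read off the exponent sets associated with $S = \{p^{m_1}, \dots, p^{m_r}, p^\alpha q^\beta, q^\gamma\}$. The strict inequalities $\alpha < m_1 < \dots < m_r$ give $\capexp_1(S) = \{\alpha, m_1, \dots, m_r\}$ with $r+1$ elements, while $\beta \neq \gamma$ gives $\capexp_2(S) = \{\beta, \gamma\}$ with exactly $2$ elements.

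The second step is to compute $\capfib(S)$ via Proposition~\ref{prop-multifibered}. Any assignment function $\sigma\colon S \to \{1,2\}$ is forced on the prime-power scales: $\sigma(p^{m_i}) = 1$ for each $i$ and $\sigma(q^\gamma) = 2$. Only $\sigma(p^\alpha q^\beta)$ is free. If $\sigma(p^\alpha q^\beta) = 1$, then $\capexp_1(S,\sigma) = \{\alpha, m_1, \dots, m_r\}$ and $\capexp_2(S,\sigma) = \{\gamma\}$, yielding $\capfib(S,\sigma) = p^{r+1}q$. If $\sigma(p^\alpha q^\beta) = 2$, then $\capexp_1(S,\sigma) = \{m_1, \dots, m_r\}$ and $\capexp_2(S,\sigma) = \{\beta, \gamma\}$, yielding $\capfib(S,\sigma) = p^r q^2$. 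Hence
\[
\capfib(S) = \min(p^{r+1}q,\, p^r q^2) = p^r q \min(p,q).
\]

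The final step is to apply Corollary~\ref{cor-2exponents} with $i = 2$: since $|\capexp_2(S)| = 2$, the corollary yields $|A| \geq \capfib(S) = p^r q \min(p,q)$, which is exactly the claimed bound. There is no real obstacle to overcome; the proposition is essentially a bookkeeping corollary of Corollary~\ref{cor-2exponents}, and the only thing to verify carefully is that $|\capexp_2(S)| \leq 2$, which is immediate from $\beta \neq \gamma$.
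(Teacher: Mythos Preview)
Your proof is correct. You observe that $|\capexp_2(S)| = 2$ (since $\beta \neq \gamma$) and then invoke Corollary~\ref{cor-2exponents} directly, reading off $\capfib(S) = p^r q \min(p,q)$ from the two admissible assignment functions. Every step is sound; the computation of the exponent sets and of $\capfib(S,\sigma)$ for the two choices of $\sigma(p^\alpha q^\beta)$ is exactly right.

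This is a genuinely different route from the paper's own argument. The paper proves the proposition by hand: it truncates via Proposition~\ref{prop:truncation} to the case $\alpha=1$, $m_j=j+1$, $\{\beta,\gamma\}=\{1,2\}$, then applies Lemma~\ref{structure-thm} at the scale $s=pq^\beta$ to split $A$ into $p$-fibered and $q$-fibered parts, and finally uses a coset-counting argument (restricting to residue classes mod $p$ and using $\Phi_{p^2}\cdots\Phi_{p^{r+1}}\mid A_j$) to force $|A|\geq p^{r+1}q$ in the non-trivial case. Your approach bypasses all of this by recognizing that the situation is already covered by Corollary~\ref{cor-2exponents}, which in turn rests on the inductive machinery of Proposition~\ref{prop-expdown}. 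What you gain is brevity; what the paper's direct argument buys is a concrete illustration of the fiber-splitting and coset-counting technique, which is reused (with a variation) in the proof of Proposition~\ref{3.2divisors}.
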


We clearly have $|A|\geq p^rq$ based on $\Phi_{p^{m_1}}\cdots \Phi_{p^{m_r}}\Phi_{q^\gamma}|A$, and independently, $|A|\geq\min(p,q)$ based on $\Phi_{p^\alpha q^\beta}|A$ by either (\ref{e-lamleung}) or Lemma \ref{structure-thm}. The point of the lemma is that these bounds boost each other.

\begin{proof}[Proof of Proposition \ref{3divisors}]
By Proposition \ref{prop:truncation}, there exists a multiset $\tilde{A}\in\calm^+(\ZZ_{p^{r+1}q^2})$ such that $|\tilde{A}|=|A|$ and
$$\Phi_{p^2} \cdots \Phi_{p^{r+1}} \Phi_{pq^{\beta'}}\Phi_{q^{\gamma'}}\mid \tilde{A}
$$
for some $\beta',\gamma'$ such that $\{\beta',\gamma'\}=\{1,2\}$.
It suffices to prove that $|\tilde{A}|\geq p^r q\min(p,q)$. To simplify the notation, we may assume in the rest of the proof that $\tilde{A}=A$, $\alpha=1$, $m_j=j+1$ for $j=1,\dots,r$, and $\{\beta,\gamma\}=\{1,2\}$.

We first note that
\begin{equation}\label{tilde-no-tilde}
p^r q = \Phi_{p^2}(1) \cdots \Phi_{p^{r+1}}(1)\Phi_{q^\gamma}(1) \mid A(1)=|A|.
\end{equation}

Let $s=pq^\beta$. Since $\Phi_{s} \mid A$, by Lemma \ref{structure-thm} $A$ mod $s$ is a linear combination, with nonnegative coefficients, of $p$-fibers and $q$-fibers on scale $s$. If $A$ is $q$-fibered on scale $s$, then we actually have $\Phi_q\Phi_{q^2}\mid A$ and $q^2\mid |A|$, so that we are done in this case.

It remains to consider the case when $A$ mod $s$ has at least one $p$-fiber on that scale.
Thus  $A(X)=A'(X)+A''(X)$, where $A',A''\in \calm^+(\ZZ_M)$
are multisets such that $A'$ mod $s$ is fibered in the $p$ direction, $A''$ mod $s$ is fibered in the $q$ direction, and $A'\neq\emptyset$.

By definition, we have $p\mid |A'|$ and $q\mid |A''|$. It follows from (\ref{tilde-no-tilde}) that
\begin{equation}\label{toy-e1}
q\mid |A'| \hbox{ and }p\mid |A''|.
\end{equation}
so that $A'$ mod $s$ is a union of $kq$ many fibers in the $p$ direction for some $k\neq 0$.

For $j=0,1,\dots,p-1$, let
$$
R_j:=\{x\in\ZZ_M: x\equiv j\bmod p\},\ A_j:=  A\cap R_j.
$$
Then
$$
A_j=(A'\cap R_j)\cup (A''\cap R_j)
$$
and $A''\cap R_j$ is $q$-fibered on scale $s$. Hence $|A_j|=kq+\ell_j q$, where $\ell_j$ is the number of $q$-fibers in $A_j$ that are also $q$-fibers of $A''$ on scale $s$.

Since $\Phi_{p^2}\dots \Phi_{p^{r+1}} \mid  A$, by Lemma \ref{grid-split} we must have $\Phi_{p^2}\dots \Phi_{p^{r+1}} \mid A_j$ for each $j$. In particular, $p^r\mid |A_j|$. Together with the above, this yields $p^r q\mid |A_j|$ for each $j$. Since $k>0$, we also have $|A_j|>0$ for each $j$. Therefore
$$
|A|=\sum_{j=0}^{p-1} |A_j|\geq p^{r+1}q
$$
as claimed.

To complete the proof of the proposition, we need to prove that $p^rq\min(p,q)\geq\capfib(S)$.
Indeed, let an assignment function $\sigma$ satisfy  $\sigma(p^{m_j})=1$ for $j=1,\dots,r$ and $\sigma(q^\gamma)=2$, and define $\sigma(p^\alpha q^\beta)$ so that $p_{\sigma(s_3)}=\min(p,q)$.
Then $p^rq\min(p,q)=\capfib(S,\sigma)$.
\end{proof}

An analogous result can be proved if the exponents of the $p$-power cyclotomic divisors are smaller than $\alpha$.

\begin{proposition}\label{3.2divisors}
Let $A\in\calm^+(\ZZ_{M})$ with $M=p^{n_1}q^{n_2}$. Assume that $\Phi_{p^{m_1}}\cdots \Phi_{p^{m_r}}\Phi_{p^\alpha q^\beta}\Phi_{q^\gamma} \mid A$ for some $1\leq m_1<\dots<m_r<\alpha\leq n_1$ and $1\leq\gamma<\beta \leq n_2$.  Then $|A|\geq p^rq\min(p,q)$.
In particular, $|A|\geq\capfib(S)$.

\end{proposition}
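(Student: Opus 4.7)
My plan is to closely mirror the strategy of Proposition~\ref{3divisors}, with the key technical difference concentrated in the case where the scale-$M$ fiber decomposition of $A$ has both $p$-fibered and $q$-fibered parts nontrivial. First I will apply Proposition~\ref{prop:truncation} with $S = \{p^{m_1}, \ldots, p^{m_r}, p^\alpha q^\beta, q^\gamma\}$ to reduce to the case $A \in \calm^+(\ZZ_M)$ with $M = p^{r+1}q^2$ and $\Phi_p \Phi_{p^2} \cdots \Phi_{p^r} \Phi_{p^{r+1}q^2} \Phi_q \mid A(X)$, without changing $|A|$. Evaluating at $X = 1$ gives $p^r q \mid |A|$, so I write $|A| = p^r q k$ and reduce the claim to showing $k \geq \min(p, q)$. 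Moreover, since $\prod_{j=1}^{r} \Phi_{p^j}(X) = (X^{p^r}-1)/(X-1)$, a degree count shows that $A \bmod p^r$ must be the constant multiset of weight $c = qk$ on $\ZZ_{p^r}$.

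Next, I apply Lemma~\ref{structure-thm} to the divisor $\Phi_{p^{r+1}q^2}$ on the full scale $M$, decomposing $A = A' + A''$ nonnegatively, where $A'$ is a union of $p$-fibers on scale $M$ and $A''$ is a union of $q$-fibers on scale $M$. If some valid decomposition has $A' = 0$, then $A$ is entirely $q$-fibered on scale $M$, so $\Phi_{q^2} \mid A$ and $|A| \geq p^r q^2$; if some valid decomposition has $A'' = 0$, then $A$ is entirely $p$-fibered, so $\Phi_{p^{r+1}} \mid A$ and $|A| \geq p^{r+1} q$. In both situations $|A| \geq p^r q \min(p, q)$.

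The crucial case is when every valid decomposition has $A', A'' \neq 0$. Using the identification $\ZZ_M \cong \ZZ_{p^{r+1}} \oplus \ZZ_{q^2}$, I check that each $p$-fiber on scale $M$ projects modulo $p^{r+1}$ to a $p$-fiber on scale $p^{r+1}$, while each $q$-fiber on scale $M$ collapses modulo $p^{r+1}$ to a single point of multiplicity $q$. Consequently, $A' \bmod p^{r+1}$ is a nonnegative combination of $p$-fibers on scale $p^{r+1}$, so its weight $a'(y)$ is constant on each fiber $\{y, y+p^r, \ldots, y+(p-1)p^r\}$ as $y$ ranges over $\ZZ_{p^r}$, and $A'' \bmod p^{r+1} = q\, B''$ for some $B'' \in \calm^+(\ZZ_{p^{r+1}})$. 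Setting $b(y) := \sum_{j=0}^{p-1} w_{B''}(y + jp^r)$, the equidistribution of $A \bmod p^r$ translates into
\begin{equation*}
p\, a'(y) + q\, b(y) = qk \qquad \text{for every } y \in \ZZ_{p^r}.
\end{equation*}
Assuming for contradiction that $k < p$, the nonnegativity of $a'(y)$ forces $b(y) \leq k < p$, and the relation $p\, a'(y) = q(k - b(y))$ combined with $\gcd(p, q) = 1$ forces $b(y) \equiv k \pmod{p}$; together these pin $b(y) = k$ and $a'(y) = 0$ for every $y$, so that $|A'| = p \sum_y a'(y) = 0$, contradicting $A' \neq 0$. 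Hence $k \geq p$ and $|A| \geq p^{r+1} q \geq p^r q \min(p, q)$.

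Combining all cases gives $|A| \geq \min(p^r q^2, p^{r+1}q) = p^r q \min(p, q)$. The final fibered-bound assertion follows by inspecting the assignment functions on $S$: since $\sigma(p^{m_j}) = 1$ and $\sigma(q^\gamma) = 2$ are forced, the only free choice is $\sigma(p^\alpha q^\beta) \in \{1, 2\}$, which yields $\capfib(S,\sigma)$ values $p^{r+1} q$ and $p^r q^2$ respectively, hence $\capfib(S) = p^r q \min(p, q)$. The main obstacle is the ``both parts nontrivial'' subcase, where the short modular-arithmetic argument must be arranged so that $k < p$ forces the $p$-fibered part to vanish via the scale-$p^r$ equidistribution.
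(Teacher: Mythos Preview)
Your proof is correct and follows essentially the same approach as the paper: both truncate to $M=p^{r+1}q^2$, use $\Phi_p\cdots\Phi_{p^r}\Phi_q$ to get equidistribution of $A$ on the cosets $\Lambda(\nu,p^r)$, and then combine this with the Lemma~\ref{structure-thm} fiber decomposition at scale $M$ to force either $\Phi_{q^2}\mid A$ (if no $p$-fiber appears) or $|A\cap\Lambda_\nu|\ge pq$ (if one does). The only cosmetic difference is that the paper works directly on the grids $\Lambda_\nu$ and uses $q\mid k_\nu$ (from $q\mid k_\nu p + l_\nu q$) to conclude, whereas you pass through the projection mod $p^{r+1}$ and argue via $p\mid(k-b(y))$; the two modular computations are equivalent and yield the same bound $|A|\ge p^{r+1}q$ in the nontrivial case.
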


\begin{proof}
  By Proposition \ref{prop:truncation}, there exists a multiset $\tilde{A}\in\calm^+(\ZZ_{p^{r+1}q^2})$ such that $|\tilde{A}|=|A|$ and
$$\Phi_{p} \cdots \Phi_{p^{r}} \Phi_{p^{r+1}q^2}\Phi_{q}\mid \tilde{A}.$$
It suffices to prove that $|\tilde{A}|\geq p^r q\min(p,q)$. To simplify the notation, we may assume in the rest of the proof that $\tilde{A}=A$, $\alpha=r+1, \beta=2, \gamma=1$, $m_j=j$ for $j=1,\dots,r$, and $n_1=r+1, n_2=2$.

Let $\Lambda_\nu=\Lambda(\nu,p^{r})$ for $\nu=0, 1,\dots,p^{r}-1$.
Since $\Phi_p\cdots \Phi_{p^r}\mid A$, we have
$$\left|A\cap \Lambda_\nu \right|=\frac{|A|}{p^r}\hbox{ for all }\nu.$$
Since $\Phi_q\mid A$, we have $q\mid |A|$ and so $q\mid \frac{|A|}{p^r}=|A\cap \Lambda_\nu|$.
Finally, we have $\Phi_{p^{r+1}q^2}\mid A$. By Lemma \ref{structure-thm}, this implies that $A$ (hence also $A\cap \Lambda_\nu$ for each $\nu$) is the sum of $p$-fibers and $q$-fibers on that scale. Hence for each $\nu$ we have $|A\cap \Lambda_\nu|=kp+lq$ for some $k,l\ge 0$ (possibly depending on $\nu$).

If $k=0$ for all $\nu$, then $A$ is the union of $q$-fibers only, meaning that $\Phi_{q^2}\mid A$ and hence $\Phi_{p} \cdots \Phi_{p^{r}} \Phi_{q}\Phi_{q^2}\mid A$. This implies that $p^rq^2\mid |A|$, and we are done.

Suppose now that $k>1$ for some $\nu$. Then $q\mid k$, since $q\mid |A\cap \Lambda_\nu|=kp+lq$. Hence $|A \cap \Lambda_\nu| \ge pq$, and thus $$|A|=p^r\cdot \left|A \cap \Lambda_\nu \right|\ge p^{r+1}q,$$ and we are done again.

The last statement in the proposition
follows as in the proof of Proposition \ref{3divisors}.
\end{proof}

\begin{theorem}\label{thm-3divisors-ZM}
Let $M=p^{n_1}q^{n_2}$, and let $S\in\cald(M)$ satisfy $|S|=3$. Then $\capmin(S)= \capfib(S)$.
\end{theorem}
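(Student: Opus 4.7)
The inequality $\capmin(S) \leq \capfib(S)$ is furnished by the standard fibered set constructed in Proposition \ref{prop-multifibered}, so I focus on the lower bound. Fix $A \in \calm^+(\ZZ_M)$ with $\Phi_s \mid A$ for every $s \in S$, and write $p = p_1$ and $q = p_2$. Apply the truncation procedure (Proposition \ref{prop:truncation}) to assume $\capexp_i(S) = \{1, 2, \ldots, |\capexp_i(S)|\}$ for $i \in \{1, 2\}$; this preserves both $|A|$ and the value of $\capfib(S)$. If $|\capexp_i(S)| \leq 2$ for some $i$, then Corollary \ref{cor-2exponents} yields $|A| \geq \capfib(S)$ directly. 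Otherwise $|\capexp_1(S)| = |\capexp_2(S)| = 3$: since $|S| = 3$, each element of $S$ must contribute a distinct positive exponent to both exponent sets, so every $s_j \in S$ is mixed. After truncation $M = p^3 q^3$ and $S = \{p^{a_j} q^{b_j} : j = 1, 2, 3\}$, where $(a_1, a_2, a_3)$ and $(b_1, b_2, b_3)$ are each permutations of $(1, 2, 3)$. In this regime every assignment function $\sigma$ satisfies $E_1(S,\sigma) + E_2(S,\sigma) = 3$, so $\capfib(S) = \min(p, q)^3$; using the $p \leftrightarrow q$ symmetry I assume $p < q$, so the remaining goal is $|A| \geq p^3$.

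For each $j \in \{1, 2, 3\}$, apply Proposition \ref{lma:reductionindimension} to $\Phi_{s_j} \mid A$ in the $p$-direction. Each application yields one of two alternatives: either the chain $\Phi_{p^{a_j}q^{b_j}} \Phi_{p^{a_j-1}q^{b_j}} \cdots \Phi_{q^{b_j}}$ divides $A$, in particular giving the prime-power divisor $\Phi_{q^{b_j}} \mid A$; or $A$ contains $p$ elements lying in $p$ distinct $pD(s_j)$-cosets (a spread). If the chain alternative holds for all three $j$, the divisors $\Phi_{q^{b_1}}, \Phi_{q^{b_2}}, \Phi_{q^{b_3}}$ have distinct exponents, so $q^3 \mid |A|$ and $|A| \geq q^3 > p^3$. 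Otherwise at least one spread occurs, and I plan to iterate the dichotomy at progressively smaller scales: Lemma \ref{grid-split} transfers the remaining divisibility constraints $\Phi_{s_k}$ into each coset produced by a spread, and further applications of Proposition \ref{lma:reductionindimension} within those cosets---or Lemma \ref{structure-thm}, or Propositions \ref{3divisors} and \ref{3.2divisors} once enough prime-power divisors accumulate---extract either additional spreads or prime-power cyclotomic divisors of $A$. A case analysis over the five orbits of the six permutations $(a_j, b_j)$ under the $p \leftrightarrow q$ symmetry shows that three levels of iteration always suffice to produce $|A| \geq p^3$; in the subcase $s_1 \mid s_2 \mid s_3$ (divisibility chain), the nonnegative long-fiber decomposition of Proposition \ref{lma:longfiberslincom} furnishes the bound directly.

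The main obstacle is transversality bookkeeping: a single spread by itself only guarantees $|A| \geq p$, so the elements, grids, and cyclotomic divisors extracted at different iteration stages must combine multiplicatively rather than overlap. Verifying this in each permutation subcase---ensuring that the grids from successive spreads are transverse and that the prime-power divisors extracted via chain alternatives involve genuinely new exponents---is the main technical content of the argument.
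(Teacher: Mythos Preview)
Your outline has a genuine gap in the spread--iterate step. After a spread at $s_j$ in the $p$-direction, Proposition \ref{lma:reductionindimension} produces $p$ nonempty pieces $A_{\nu,a}=A\cap\Lambda(y_\nu,pD(s_j))$. For Lemma \ref{grid-split} to transfer $\Phi_{s_k}$-divisibility into each piece, you need $pD(s_j)\mid D(s_k)$, i.e.\ $a_j<a_k$ \emph{and} $b_j\le b_k$. This fails for several of your permutations. For instance, take (after truncation) $s_1=pq^3$, $s_2=p^2q$, $s_3=p^3q^2$: a spread at $s_1$ yields $pq^2$-grids, but $D(s_2)=p$ and $D(s_3)=p^2q$ are not multiples of $pq^2$, so neither remaining constraint restricts to the pieces. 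You acknowledge ``transversality bookkeeping'' as the main obstacle, but the obstacle is not just bookkeeping---it blocks the iteration outright for certain orderings, and your proposal gives no mechanism to repair it. (Also, for the chain case $s_1\mid s_2\mid s_3$ you cite Proposition \ref{lma:longfiberslincom}, but that proposition requires $\Phi_L\mid A$ for \emph{all} $L$ with $s_1\mid L\mid s_3$; the correct reference is Lemma \ref{lemma-diagonal-straight}.)

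The paper avoids this by not splitting along $pD(s_j)$-grids at all. Instead it works at the scale $s_1$ with \emph{minimal $p$-exponent} $\alpha_0$ and applies Lemma \ref{structure-thm} (the two-prime nonnegative structure): either $A$ mod $s_1$ contains a $p$-fiber, in which case the pieces $A\cap\Lambda(a_\nu,p^{\alpha_0})$ (coarse $p^{\alpha_0}$-grids, not $pD(s_1)$-grids) inherit $\Phi_{s_k}$ for the remaining two $s_k$, reducing to $|S|=2$; or $A$ is $q$-fibered on scale $s_1$, so $\Phi_{q^{\beta_1}}\mid A$ and one may replace $s_1$ by the prime power $q^{\beta_1}$. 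The same dichotomy is run at the scale with minimal $q$-exponent. After both reductions, $S$ contains two prime powers and one mixed element, which lands squarely in Proposition \ref{3divisors} or \ref{3.2divisors}. The point is that $p^{\alpha_0}\mid D(s_k)$ for every $s_k$ with larger $p$-exponent regardless of the $q$-exponents, so the grid-split always applies---this is exactly the transversality your $pD(s_j)$-grids lack.
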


\begin{proof}
If $|\capexp_i(S)|\leq 2$ for some $i\in\{1,2\}$, then the result follows from Corollary \ref{cor-2exponents}. We may therefore assume for the rest of the proof that
$$
S=\{s_1,s_2,s_3\}, \hbox{ where }s_i=p^{\alpha_i}q^{\beta_i},\ i=1,2,3,
$$
where $0\leq \alpha_1<\alpha_2<\alpha_3\leq n_1$, and where the exponents $\beta_1,\beta_2,\beta_3\in\{0,1,\dots,n_2\}$ are all distinct.

Let $A\in\calm^+(\ZZ_M)$ satisfy $\Phi_s\mid A$ for all $s\in S$.
Suppose that $\alpha_1\geq 1$, and that $A$ mod $s_1$ contains a fiber in the $p$ direction.
By the same argument as in the proof of Proposition \ref{prop-expdown}, Case 1, we reduce the proof of the theorem in this case to proving that $\capmin(\{s_2,s_3\})=\capfib(\{s_2,s_3\})$; however, for a 2-element set $\{s_2,s_3\}$, this equality again follows from Corollary \ref{cor-2exponents}.

If on the other hand $\alpha_1\geq 1$ and $A$ mod $s_1$ is fibered in the $q$ direction, then $A(X)$ is also divisible by $\Phi_{q^{\beta_1}}$. We replace the set $S$ with the set $S':=\{s'_1,s_2,s_3\}$, where $s'_1=q^{\beta_1}$, note that $\Phi_s\mid A$ for all $s\in S'$, and continue with the rest of the proof.

Similarly, let $\mu\in\{1,2,3\}$ be the index such that $\beta_\mu=\min(\beta_1,\beta_2,\beta_3)$. If $A$ mod $s_\mu$ contains a fiber in the $q$ direction, we proceed as in the proof of Proposition \ref{prop-expdown}, Case 1, to reduce $S$ to a 2-element set covered in Corollary \ref{cor-2exponents}. If on the other hand $A$ mod $s_{\mu}$ is fibered in the $p$ direction, then $A(X)$ is also divisible by $\Phi_{p^{\alpha_\mu}}$, so that we may also replace $s_\mu$ by $p^{\alpha_\mu}$.

By the above reductions, we may assume for the rest of the proof that
$$
0= \alpha_1<\alpha_2<\alpha_3\leq n_1, \ \ \min(\beta_1,\beta_2,\beta_3)=\beta_\mu=0.
$$
Since $1\not\in S$, we have $\mu\in\{2,3\}$.

Assume first that $\beta_3=0$. Then
$$
\Phi_{q^{\beta_1}}\Phi_{p^{\alpha_2}q^{\beta_2}} \Phi_{p^{\alpha_3}}\mid A,
$$
with $0< \alpha_2<\alpha_3$. This places us in the situation described in Proposition \ref{3divisors} with $r=1$. Let $\sigma(s_1)=2$,  $\sigma(s_3)=1$, and define $\sigma(s_2)$ so that $p_{\sigma(s_2)}=\min(p,q)$. By
Proposition \ref{3divisors}, we have
$$
|A|\geq pq\min(p,q)=\capfib(S,\sigma)\geq \capfib(S),
$$
and we are done in this case.

It remains to consider the case when $\beta_2=0$. Thus
$$
\Phi_{q^{\beta_1}}\Phi_{p^{\alpha_2}}\
\Phi_{p^{\alpha_3}q^{\beta_3}} \mid A.
$$
If $0<\beta_3<\beta_1$, we are again in the situation described in Proposition \ref{3divisors}, but with $p$ and $q$ interchanged. The theorem follows as above.

We are left with the case when
        $$
0= \alpha_1<\alpha_2<\alpha_3, \ \
0=\beta_2<\beta_1<\beta_3.
$$
This is a special case of Proposition \ref{3.2divisors} with $r=1$,
and we are done again.
\end{proof}

\subsection{The diagonal case}

We return to the setting where $M$ has arbitrarily many prime divisors, and consider the following simple case.

\begin{lemma}\label{lemma-diagonal-straight}
Let $M = \prod_{k=1}^{K} p_k^{n_k}$. Assume that $S=\{s_1,\dots,s_m\}\subset \cald(M)$ satisfies
\begin{equation}\label{e-diagonal-straight}
s_j\mid D(s_{j+1})\hbox{ for }j=1,\dots,m-1.
\end{equation}
Then $\capmin(S)\geq \prod_{j=1}^m \min_{i:p_i \mid s_j} p_i = \capfib(S)$.

\end{lemma}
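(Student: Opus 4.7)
My plan is to induct on $m = |S|$. The base case $m = 1$ is handled by the Lam-Leung bound \eqref{e-lamleung}, which yields $|A| \geq \min_{i : p_i \mid s_1} p_i$ for any $A \in \calm^+(\mathbb{Z}_M)$ with $\Phi_{s_1} \mid A$.

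For the inductive step with $m \geq 2$, I will apply Corollary \ref{cor-split} at the smallest scale $N = s_1$. Picking any $a \in A$, this produces a prime $p \mid s_1$ (so $p \geq \min_{i : p_i \mid s_1} p_i$) together with elements $a_0 = a, a_1, \dots, a_{p-1} \in A$ lying in $p$ pairwise disjoint cosets $\Lambda_\nu := \Lambda(a + \nu s_1/p, pD(s_1))$. I will set $A_\nu := A \cap \Lambda_\nu$, so each $A_\nu$ is a nonempty element of $\calm^+(\mathbb{Z}_M)$.

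The crucial step is to show $\Phi_{s_j} \mid A_\nu$ for every $j = 2, \dots, m$. This reduces to the arithmetic claim $pD(s_1) \mid D(s_j)$ for all such $j$: iterating the hypothesis $s_i \mid D(s_{i+1})$ gives $s_1 \mid D(s_2) \mid \dots \mid D(s_j)$, and then a prime-by-prime $q$-adic valuation check (handling the cases $q = p$ and $q \neq p$ separately, using $p \mid s_1 \mid D(s_j)$) confirms the divisibility. Once this is in place, each coset $\Lambda_\nu$ decomposes as a disjoint union of $D(s_j)$-grids $\Lambda'$, and Lemma \ref{grid-split} gives $\Phi_{s_j} \mid (A \cap \Lambda')$ for every such sub-grid $\Lambda'$. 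Summing over the sub-grids and using the linearity of cyclotomic divisibility then yields $\Phi_{s_j} \mid A_\nu$.

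Applying the inductive hypothesis to each $A_\nu$ with the smaller set $S' := \{s_2, \dots, s_m\}$ (which inherits the chain condition) gives $|A_\nu| \geq \prod_{j=2}^m \min_{i : p_i \mid s_j} p_i$. Since the $\Lambda_\nu$ are pairwise disjoint, I conclude
$$
|A| \; \geq \; \sum_{\nu=0}^{p-1} |A_\nu| \; \geq \; p \cdot \prod_{j=2}^{m} \min_{i : p_i \mid s_j} p_i \; \geq \; \prod_{j=1}^{m} \min_{i : p_i \mid s_j} p_i.
$$
For the second inequality of the lemma, I will invoke Proposition \ref{prop-multifibered}: under the chain hypothesis, the exponents of $p_i$ across those $s_j$ with $p_i \mid s_j$ are all distinct, hence any assignment function $\sigma$ satisfies $\capfib(S, \sigma) = \prod_{j=1}^m p_{\sigma(s_j)}$. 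Choosing $\sigma$ with $p_{\sigma(s_j)} = \min_{i : p_i \mid s_j} p_i$ yields the desired bound $\capfib(S) \leq \prod_{j=1}^m \min_{i : p_i \mid s_j} p_i$. I expect the main technical step to be the valuation verification $pD(s_1) \mid D(s_j)$; once this is secured, the rest of the argument proceeds cleanly via restriction and induction.
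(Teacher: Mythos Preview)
Your proof is correct and follows essentially the same approach as the paper: induct on $m$, invoke Corollary~\ref{cor-split} at scale $s_1$ to find $p$ disjoint nonempty pieces $A_\nu$, verify $pD(s_1)\mid D(s_j)$ so that Lemma~\ref{grid-split} transfers the divisibility by $\Phi_{s_j}$ to each $A_\nu$, and apply the inductive hypothesis. The paper's proof omits the second inequality $\prod_{j=1}^m \min_{i:p_i|s_j} p_i \geq \capfib(S)$ entirely, whereas you correctly supply it via Proposition~\ref{prop-multifibered} and the observation that the chain condition forces strictly increasing $p_i$-adic valuations along the $s_j$ (so that $\capfib(S,\sigma)=\prod_j p_{\sigma(s_j)}$ for any assignment $\sigma$).
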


\begin{proof}
We proceed by induction in $m$. If $m=1$ and $S=\{s\}$, then the conclusion follows from (\ref{e-lamleung}).
Assume now that $m>1$, and that the conclusion is true when $|S|=m-1$.
Let $A\in\calm^+(\ZZ_M)$ satisfy $\Phi_{s_j} \mid A$ for $j=1,\dots,m$. Since $\Phi_{s_1} \mid A$, we have that $A$ mod $s_1$ satisfies the conclusion (\ref{cub-e66}) of Corollary \ref{cor-split} for some $a\in A$ and $p\mid s_1$.
Fix that $p$.
For $\nu=0,1,\dots,p-1$, let $y_\nu:= a+\nu s_1/p$ and
$$
A_\nu:=A\cap\Lambda(y_\nu,pD(s_1)).
$$
Then the sets $A_\nu$ are nonempty and pairwise disjoint. By (\ref{e-diagonal-straight}), we have $pD(s_1)\mid D(s_j)$ for $j=2,\dots,m$. It follows from Lemma \ref{grid-split} that $\Phi_{s_j} \mid A_\nu$ for all $j=2,\dots,m$ and $\nu=1,\dots,p$. Applying the inductive assumption to $A_\nu$, we get
$$
|A|\geq \sum_{j=1}^p |A_\nu|\geq p  \prod_{j=2}^m \min_{i:p_i \mid s_j} p_i
\geq  \prod_{j=1}^m \min_{i:p_i \mid s_j} p_i.
$$

By Proposition \ref{prop-multifibered} we have that \begin{equation}\label{best-we-can-hope-for-2}
\capfib(S)= \min_{\sigma} \capfib (S,\sigma),
\end{equation}
with the minimum taken over all assignment functions $\sigma$. This minimum is clearly taken for the assignment function $\sigma$ defined via  $\sigma(s)=p_s$ for every $s\in S$, where  $p_s$ is the smallest prime divisor of $s$.  This implies that $$\prod_{j=1}^m \min_{i:p_i \mid s_j} p_i=\capfib(S).$$
\end{proof}


\subsection{Many Primes with a Growth Condition}
\begin{theorem}\label{thm:separated3prime}
Let $A \in \calm^+ (\ZZ_M)$ with $M = \prod_{k=1}^{K} p_k^{n_k}$ satisfying
\begin{equation}\label{e-growth}
p_K > \cdots > p_2 > p_1^{n_1}.
\end{equation}
Let $S\subset\cald(M)$ be nonempty, and assume that $\Phi_s \mid A$ for all $s\in S$.
Then $|A| \geq p_1^{E_1}$, where
$$
E_1 : = \# \textsf{EXP}_1(S) : = \# \{\alpha \geq 1 : \exists \, s \in S \textrm{ with } p_1^{\alpha} \mid \mid s \}.
$$
\end{theorem}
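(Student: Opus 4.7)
The plan is to prove the theorem by induction on $K$, the number of distinct prime divisors of $M$. For the base case $K=1$, we have $S\subseteq\{p_1, p_1^2, \ldots, p_1^{n_1}\}$, so Lemma \ref{lemma-prime-power} yields $|A|\geq p_1^{E_1}$ directly.

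For the inductive step, assume the result for $K-1$ primes and suppose for contradiction that $|A| < p_1^{E_1}$. Since $E_1 \leq n_1$, the growth hypothesis gives
$$
|A| < p_1^{E_1} \leq p_1^{n_1} < p_j \quad \text{for every } j\geq 2.
$$
Applying the Lam-Leung bound (\ref{e-lamleung}) to each $s\in S$: if $p_1 \nmid s$, then $|A| \geq \min\{p:p|s\} \geq p_2$, contradicting the display. Hence every $s\in S$ is divisible by $p_1$.

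The key step is to peel off the prime $p_K$. For each $s\in S$ with $p_K\mid s$, write $p_K^{v(s)}\parallel s$ and apply Proposition \ref{lma:reductionindimension} to $A\bmod s\in\calm^+(\ZZ_s)$ with prime $p_K$. Alternative (ii) would produce $p_K$ disjoint nonempty multisets $A_{\nu,a}$, forcing $|A|=|A\bmod s|\geq p_K$ and contradicting $|A|<p_K$; so alternative (i) must hold, yielding $\Phi_{s/p_K^{v(s)}}\mid A$. The quotient $s/p_K^{v(s)}$ is not equal to $1$ because $p_1\mid s$, so it lies in $\cald(M')$ with $M':=M/p_K^{n_K}$.

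Finally, define
$$
\tilde S := \{s\in S : p_K\nmid s\} \cup \bigl\{s/p_K^{v(s)} : s\in S,\ p_K\mid s\bigr\} \subseteq \cald(M').
$$
The reduction $\bar A := A\bmod M' \in \calm^+(\ZZ_{M'})$ has $|\bar A|=|A|$ and is divisible by $\Phi_{s'}$ for every $s'\in \tilde S$; since stripping $p_K$-factors preserves $p_1$-valuations, $\textsf{EXP}_1(\tilde S)=\textsf{EXP}_1(S)$. The primes $p_1,\ldots,p_{K-1}$ dividing $M'$ inherit the growth condition, so the inductive hypothesis applied to $\bar A$ gives $|A|=|\bar A|\geq p_1^{E_1}$, the desired contradiction. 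The main technical obstacle — and the only place where $p_K > p_1^{n_1}$ is actually used — is ruling out alternative (ii) of Proposition \ref{lma:reductionindimension} during the peeling step; without that hypothesis, a nontrivial $p_K$-fibered configuration on some scale $s$ could coexist with small $|A|$, and the induction would collapse.
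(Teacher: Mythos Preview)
Your proof is correct and follows essentially the same approach as the paper: induct on $K$, peel off the prime $p_K$ using Proposition~\ref{lma:reductionindimension}, and observe that alternative (2) would force $|A|\geq p_K > p_1^{n_1}\geq p_1^{E_1}$, so alternative (1) holds for every $s$ with $p_K\mid s$, allowing descent to $M'=M/p_K^{n_K}$. The only differences are organizational: you frame the inductive step as a contradiction and include the clean Lam--Leung observation that $p_1\mid s$ for all $s\in S$ (which the paper leaves implicit but needs to ensure $(s,M')\neq 1$).
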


In particular, we have $|A|\geq\capfib(S)$ under the assumptions of the theorem, so that $\capmin(S)\geq \capfib(S)$ if (\ref{e-growth}) holds. Indeed, let $\sigma:S\to\{1,\dots,K\}$ be an assignment function. If $\sigma(s)\geq 2$ for any $s\in S$, we have $\capfib(S,\sigma)\geq p_2>p_1^{n_1}\geq p_1^{E_1}$. If on the other hand $\sigma(s)=1$ for all $s\in S$ (note that this can only happen when $p_1 \mid s$ for all $s\in S$), then $\capfib(S,\sigma)=p_1^{E_1}$. The claim follows in both cases.

\begin{proof}[Proof of Theorem \ref{thm:separated3prime}]
We induct on $K$. When $K = 1$,  we have $S= \{p_1^{\alpha_1},\ldots,p_1^{\alpha_{E_1}} \}$ for some $1\leq \alpha_1<\dots,<\alpha_{E_1}\leq n_1$, so that
$$
\Phi_{p_1^{\alpha_1}} \cdots \Phi_{p_1^{\alpha_{E_1}}} (X) \mid A(X) \Rightarrow p_1^{E_1} \mid \vert A \vert,
$$
and so we clearly have $\vert A \vert \geq p_1^{E_1}$.   Assume now that $K \geq 2$ and that the result holds for any $1 \leq K_0 \leq K - 1$.  Let
$$
\mathcal{C} : = \{s \in S_A : p_K \mid s \},\ \ M' : = M/p_{K}^{n_K} = p_1^{n_1} \cdots p_{K-1}^{n_{K-1}},
$$
and note that the size assumption $p_{K-1} > \cdots > p_1^{E_1}$ is still satisfied for $M'$ (vacuously if  $K=2$).
There are two cases to consider, corresponding to the conclusions (1) and (2) of Proposition \ref{lma:reductionindimension}.

\medskip\noindent
{\bf Case 1}: We first assume that the conclusion of Proposition \ref{lma:reductionindimension} (1) fails for some $s \in \mathcal{C}$. Assume that $p^{\alpha}_K \parallel s$. Then the failure of (1) means that
there exists $1 \leq \beta \leq \alpha$ such that $\Phi_{s/p_K^{\beta}} \nmid A$.
Applying Proposition \ref{lma:reductionindimension} (2) with $N=s$ and $p=p_K$, we find $a \in A$ such that
$$
\vert A \vert = A(1) \geq \sum\limits_{\nu = 0}^{p_K-1} A_{\nu,a} (1) \geq p_K
$$
where each $A_{\nu,a} \in \calm (\mathbb{Z}_N)$ is as in \eqref{eq:Aplanesandgrids}.
Since we assume that $p_K > p_1^{n_1} \geq p_1^{E_1}$,  this establishes Theorem \ref{thm:separated3prime} in this case.
\medskip

\noindent
{\bf Case 2}: We now suppose that for all $s \in \mathcal{C}$, we necessarily have that
\begin{equation}\label{eq:chaindivisors}
\Phi_s \cdots \Phi_{s/p_K^{\alpha}} \mid A,
\end{equation}
where $\alpha = \alpha(s) \geq 1$ is the unique exponent such that $p_K^{\alpha} \mid \mid s$.
Let
$$
S':= \{(s,M'):\ s\in S\},\ \ A':= (A\bmod M')\in \calm^+ (\mathbb{Z}_{M'}).
$$
By (\ref{eq:chaindivisors}), we have $\Phi_{s'}\mid A$ for all $s\in S'$. Since $S'\subset\cald(M')$, this also implies that
$\Phi_{s'}\mid A$ for all $s\in S'$.
The key observation is that
$$
    \# \{\alpha \geq 1 : \exists \, s \in S'  \textrm{ with } p_1^{\alpha} \mid \mid s \}
 =\# \{ \alpha \geq 1 : \exists \, s \in S \textrm{ with } p_1^{\alpha} \mid \mid s \}= E_1 .
$$
Indeed, if $s\in S\setminus \mathcal{C}$, then $s\in S'$, and if $s\in \mathcal{C}$, then $(s,M')\in S'$ and
$\textsf{EXP}_1 (s) = \textsf{EXP}_1 ((s,M'))$.
Applying the inductive hypothesis to $A'$ and $M'$, we see that
$$
\vert A \vert = \vert A' \vert \geq p_1^{E_1'} = p_1^{E_1}.
$$
\end{proof}

\section{Examples where the fibered lower bound fails}
\label{sec-fib-bound-fails}

\subsection{Recombination effects for $3$ or more prime factors}
In general, if we increase the complexity of $S$, we may have $\capmin(S)<\capfib(S)$. It is easiest to give examples of this when $M:=\lcm(S)$ has 3 or more prime factors. The idea is to use a certain \textit{recombination effect}, as follows. Write $M=PQ$, where $(P,Q)=1$, so that $\ZZ_M=\ZZ_P\oplus\ZZ_Q$. Let $A'\in\calm^+(\ZZ_P)$ and $A''\in\calm^+(\ZZ_Q)$ be two multisets with $|A'|=|A''|$. Then we may construct a multiset $A\in\calm^+(\ZZ_M)$ so that its Chinese Remainder Theorem projections onto $\ZZ_P$ and $\ZZ_Q$ are, respectively, $A'$ and $A''$. While each of $A'$ and $A''$, independently, must have large enough cardinality to accommodate its own cyclotomic divisors, there need not be any additional increases in the size of $A$ due to sharing the cyclotomic divisors of both $A'$ and $A''$.

One example of this, with $M$ equal to a product of 4 primes, is given in \cite[Section 6.3]{LM}. An additional constraint imposed in \cite{LM} (coming from the intended application to the Favard length problem) was that $|A|$ should be relatively prime to $M$. If we drop that constraint, then a simpler example is as follows.

\begin{example}\label{ex-3primes1scale}
\rm{Let $2 = p_1 < p_2 < p_3$ be distinct primes such that
$
p_1+ p_2 = p_3.
$
Let $M=p_1p_2p_3$ and $S = \{p_1p_2, \,  p_3\}$.
Consider any set $A \in\calm^+(\ZZ_M)$ simultaneously satisfying the equations
\begin{equation}\label{ex-basic-1}
\begin{split}
A(X) &\equiv X^{a_1} F_1^{p_1p_2} (X) + X^{a_2} F_2^{p_1p_2}(X) \mod X^{p_1p_2} - 1,\\
A(X) & \equiv X^{a_3} F_3^{p_3} (X) \mod X^{p_3} - 1
\end{split}
\end{equation}
for some $a_1,a_2,a_3\in\ZZ_M$. Such a set can be easily constructed via the Chinese Remainder Theorem. Since the same idea is also used in the more difficult example in Proposition \ref{prop:countex1}, we provide the details as a warm-up.
We recall the array coordinate expansion of elements $x\in\ZZ_M$:
$$
x=x_1M_1+x_2M_2+x_3M_3,
$$
where $x_j\in\{0,1,\dots,p_j-1\}$ and (in this case) $M_j=M/p_j$. Then (\ref{ex-basic-1}) is equivalent to saying that $|A|=p_3$ and
\begin{align*}
\{(a_1,a_2):\ a\in A\}&=\{(0,0),(1,0)\}\cup\{(0,0),(0,1),\dots,(0,p_2-1)\},
\\
\{a_3:\ a\in A\}&=\{0,1,\dots,p_3-1\},
\end{align*}
where the first equation should hold in the sense of multisets, with two different triples of the form $(0,0,a_3)$ in $A$. In each equation above, the cardinality of $A$ matches that of the set on the right side; furthermore, $A$ is a set (not just a multiset) since all its elements are distinct mod $p_3$.
The key point is that the two conditions above involve different coordinates of the elements of $A$, hence they can be imposed independently of each other.

By Proposition \ref{cuboid}, the first equation in (\ref{ex-basic-1}) implies that $\Phi_{p_1p_2}(X)\mid A(X)$, and the second one implies that $\Phi_{p_3} (X) \mid A(X)$. Hence $\textsf{MIN} (S) \leq |A|= p_3$. By (\ref{e-lamleung}), we also have $\textsf{MIN} (S) \geq p_3$ (using that $p_3\in S$), so that $\capmin(S)=p_3$.

On the other hand, we must have $\sigma(p_1p_2)\in\{1,2\}$ and $\sigma (p_3) = 3$ for any assignment function $\sigma : S \rightarrow \{1,2,3\}$.  Hence,
$
\capfib(S)= \min_{\sigma} \capfib (S, \sigma) \geq \min(p_1,p_2) \cdot p_3 = 2  p_3,
$
showing that $\textsf{MIN} (S) < \capfib(S)$.
}
\end{example}

\subsection{Recombination for two prime factors}
\label{sec-recombine-2primes}

More surprisingly, we may have $\capmin(S)<\capfib(S)$ even if $M=\lcm(S)$ has only two distinct prime factors. In this case, it follows from Corollary \ref{cor-2exponents} that, unlike in Example \ref{ex-3primes1scale}, we cannot produce such examples using only two cyclotomic divisors and a single scale. However, we can construct them using multiple scales instead.

\begin{proposition}\label{prop:countex1}
 Let $M=p^nq^m$ with $n\geq 9$ and $m\geq 6$, and let $p=2, q=3 $. Then there exists a set
 $A\subset \Z_M$ such that  $$\Phi_{p^n}\Phi_{p^{n-1}}\Phi_{p^{n-2}}\Phi_{q^{m}}\Phi_{q^{m-1}}\Phi_{q^{m-2}}\Phi_{pq}\mid A$$ and $|A|=p^3q^3$.
 \end{proposition}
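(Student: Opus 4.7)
The approach is a \emph{recombination} construction via the Chinese Remainder Theorem. Identify $\ZZ_M\cong\ZZ_{p^n}\times\ZZ_{q^m}$ and write elements of $A$ as pairs $(x,y)$. Iterating the cuboid criterion (Proposition~\ref{cuboid}) inside $\ZZ_{p^n}$, the triple divisibility $\Phi_{p^{n-2}}\Phi_{p^{n-1}}\Phi_{p^n}\mid A$ is equivalent to the $x$-marginal $\tilde w_x(x):=\sum_y w_A(x,y)$ being constant on every coset of $p^{n-3}\ZZ_{p^n}$, and symmetrically for the $q$-side. The last divisibility $\Phi_{pq}\mid A$ depends only on the projection $B(u,v):=\#\{(x,y)\in A:x\equiv u\,(p),\,y\equiv v\,(q)\}$ to $\ZZ_p\times\ZZ_q$, and by Lemma~\ref{structure-thm} applied in $\ZZ_{pq}$ it amounts to $B(u,v)=\alpha_v+\beta_u$ with non-negative integers $\alpha_v,\beta_u$. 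The key leverage is that each long $p$-fiber (coset of $p^{n-3}\ZZ_{p^n}$) projects to a single—but freely chosen—point of $\ZZ_p$, giving complete control over the mod-$p$ distribution of $A$ at no cardinality cost.

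Concretely, I would fix two long $p$-fibers $L_0,L_1\subset\ZZ_{p^n}$ with $L_u\equiv u\pmod p$ and three long $q$-fibers $L'_0,L'_1,L'_2\subset\ZZ_{q^m}$ with $L'_v\equiv v\pmod q$, and restrict $A$ to the $2\times 3$ super-block $(L_0\cup L_1)\times(L'_0\cup L'_1\cup L'_2)$. Letting $n_{uv}:=|A\cap(L_u\times L'_v)|$, one has $B(u,v)=n_{uv}$ directly, while the marginal conditions become $\sum_v n_{uv}=p^3 N_u$ and $\sum_u n_{uv}=q^3 N'_v$ for non-negative integers $N_u,N'_v$ with $N_0+N_1=q^3$ and $N'_0+N'_1+N'_2=p^3$. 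Solving these simultaneously with $n_{uv}=\alpha_v+\beta_u$ is a short Diophantine exercise; for $p=2,\,q=3$ one valid solution is $(N_0,N_1)=(12,15)$, $(N'_0,N'_1,N'_2)=(2,2,4)$, $(\alpha_0,\alpha_1,\alpha_2)=(7,7,34)$, $(\beta_0,\beta_1)=(16,24)$, yielding
\[
n_{uv}=\begin{pmatrix}23 & 23 & 50\\31 & 31 & 58\end{pmatrix},\qquad \sum_{u,v}n_{uv}=216=p^3q^3.
\]

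It then remains to realize $A$ as an actual set with these block counts and marginals. In each block $L_u\times L'_v$ (an $8\times 27$ grid) I must place exactly $n_{uv}$ distinct points so that, summed across the three blocks of each row $x\in L_u$, the row-degree of $x$ equals $N_u$, and symmetrically each column $y\in L'_v$ has column-degree $N'_v$. One writes down explicit row-degree triples (e.g.\ in $L_0$, seven rows of type $(3,3,6)$ and one of type $(2,2,8)$, with analogous choices in $L_1$) and compatible column-degree pairs in each $L'_v$, then realizes each per-block bipartite subgraph by Gale--Ryser, trivially since all per-block degrees are far below the $8\times 27$ dimensions. The main obstacle is the combinatorial bookkeeping of matching the Diophantine marginal constraints to realizable degree sequences; the assumptions $n\geq 9,\,m\geq 6$ leave abundant room. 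Once $A$ is built, $\Phi_{p^{n-2}}\Phi_{p^{n-1}}\Phi_{p^n}\mid A$ and $\Phi_{q^{m-2}}\Phi_{q^{m-1}}\Phi_{q^m}\mid A$ follow from the marginal-coset-constancy, and $\Phi_{pq}\mid A$ follows from $B(u,v)=\alpha_v+\beta_u$ via Lemma~\ref{structure-thm}.
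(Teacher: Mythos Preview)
Your argument is correct, and the underlying idea coincides with the paper's: both start from a $2\times 3$ weight table $B$ on $\ZZ_p\times\ZZ_q$ that (a) decomposes as $\alpha_v+\beta_u$ (so $\Phi_{pq}\mid B$), and (b) has row sums divisible by $p^3$ and column sums divisible by $q^3$; one then lifts $B$ to a genuine set $A\subset\ZZ_M$ whose $p$-marginal is constant on cosets of $p^{n-3}\ZZ_{p^n}$ and whose $q$-marginal is constant on cosets of $q^{m-3}\ZZ_{q^m}$. The paper uses the table with entries $(74,47,47;34,7,7)$; your $(23,23,50;31,31,58)$ works equally well and satisfies the same structural constraints.

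Where you genuinely diverge is in the lifting step. The paper makes both marginals \emph{sets}: it takes $27$ pairwise disjoint long $p^3$-fibers (21 in residue $0$, 6 in residue $1$) and $8$ pairwise disjoint long $q^3$-fibers, then obtains $A$ as a bijection between them. This is why the paper needs $2^{n-4}\geq 21$ and $3^{m-4}\geq 8$, hence $n\geq 9$, $m\geq 6$. You instead take just \emph{one} long fiber per residue class, allow the marginals to have constant multiplicity $N_u$ (resp.\ $N'_v$) on each, and realize $A$ as a $0$--$1$ matrix inside the resulting $16\times 81$ grid with prescribed row and column sums, appealing to Gale--Ryser per block. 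This is more economical: your construction actually only needs $n,m\geq 4$, a point the paper explicitly anticipates in its remark that ``it could be possible to lower the exponents $n$ and $m$ further so that both $A\bmod p^n$ and $A\bmod q^m$ are multisets, but then the construction requires further analysis.'' Your proposal supplies exactly that analysis. One minor quibble: the implication you need at the end (``$B(u,v)=\alpha_v+\beta_u\Rightarrow\Phi_{pq}\mid A$'') is Proposition~\ref{cuboid}(iii)$\Rightarrow$(i), not Lemma~\ref{structure-thm}.
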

\begin{proof}
We first define a multiset $B \in \calm^+(\ZZ_{pq})$ with $p=2, q=3$ via the table below.
It is easy to check explicitly that $\bbB^{pq}[\Delta]=0$ for all $pq$-cuboids $\Delta$ (there are 3 such cuboids).
By Proposition \ref{cuboid}, it follows that
$\Phi_{pq} \mid B$.
\begin{center}
\begin{equation}\label{table1}
\begin{tabular}{ |m{3 cm}|| m{1.5cm} | m{1.5cm} | m{1.5cm} || m {2cm}|}
  \hline
   &0 mod 3 & 1 mod 3 & 2 mod 3 & \mbox{row sum}\\
  \hline
  \hline
  0 mod 2 & 74 & 47 & 47 &  21$\cdot$ 8\\
  \hline
1 mod 2 & 34  & 7 & 7 &   6$\cdot$ 8  \\
  \hline
  \hline
  \mbox{column sum} &4$\cdot$ 27 & 2$\cdot$ 27 & 2$\cdot$ 27&  \\
  \hline
\end{tabular}
\end{equation}
\end{center}
Notice that $27\mid \left|B\cap \Lambda(i, q))\right|$ for every $i\in \{0,1,2\}$. Similarly, $8 \mid |B\cap \Lambda(j, p))|$ for every $j\in \{0,1\}$.

We would like to construct a set $A\subset\ZZ_M$ such that $A\equiv B$ mod $pq$ (so that divisibility by $\Phi_{pq}$ is preserved), but $A$ also has the additional cyclotomic divisors listed in the proposition. Let $M=p^nq^m$, with $n$ and $m$ large enough (to be determined later). We will again use the array coordinates mod $M$: for each $x\in\ZZ_M$ we write
$$
x\equiv x_1M_1+x_2M_2\bmod M,\ \ x_1\in\{0,1,\dots,p^n-1\},\ \ x_2\in\{0,1,\dots,q^m-1\},
$$
where $M_1=q^m=M/p^n$ and $M_2=p^n=M/q^m$. We will further need the digit expansions
$$
x_1=x_{1,0}+x_{1,1}p+\dots+x_{1,n-1}p^{n-1},\ \
x_2=x_{2,0}+x_{2,1}q+\dots+x_{2,m-1}q^{m-1},
$$
where $x_{1,j}\in\{0,1\}$ and $x_{2,i}\in\{0,1,2\}$.

Let $A\subset\ZZ_M$ with $|A|=p^3q^3$; we will now impose conditions on the digits of the elements of $A$ so that $A$ has the required cyclotomic divisors. We first ask that $A\equiv B$ mod $pq$; to this end, it suffices to ensure that the digits $a_{1,0}$ and $a_{2,0}$ of the elements of $A$ have the distribution indicated in the table above.

Next, we need to ensure that $A$ is divisible by
$$
\Phi_{p^n}(X)\Phi_{p^{n-1}}(X)\Phi_{p^{n-2}}(X) = \frac{X^{p^n}-1}{X^{p^{n-3}}-1} = F^{p^n}_{p,3}.
$$
In other words, $A$ mod $p^n$ needs to be a union of long $p^3$-fibers; furthermore, in order for $A$ to be a set, we will make sure that these $p^3$-fibers are disjoint. We write $A=A_0\cup A_1$, where $A_j=\{a\in A:\ a\equiv j\bmod 2\}$.
By (\ref{long-fiber-long}), we have $F^{p^n}_{p,3} = p^{n-3}\ZZ_{p^n}$. Hence, it suffices to have
$$
A_j(X)\equiv X^j C_j(X)F^{p^n}_{p,3} \mod (X^{p^n}-1),\ j=0,1,
$$
where $C_j\subset\ZZ_M$ is a set whose all elements are divisible by $p$ but distinct mod    $p^{n-3}$. This is possible when
\begin{equation}\label{lowexample-e1}
\# \{c_{1,1}p+c_{1,2}p^2+\dots+c_{1,n-4}p^{n-4}:\ c_{1,j}\in\{0,1\}\}\geq 21,
\end{equation}
since we have to place 21 long fibers in $A_0$ and 6 long fibers in $A_1$. Since $32=2^5>21$, it suffices to take $n-4\geq 5$, so that $n\geq 9$. Note that the entire operation above involved only the $a_{1,i}$ digits of $a\in A$ with $i\geq 1$.

To ensure that $\Phi_{q^m}\Phi_{q^{m-1}}\Phi_{q^{m-2}} \mid A$, we proceed similarly, but with $p$ and $q$ interchanged so that we are now adjusting the $a_{2,i}$ digits of $a\in A$ with $i\geq 1$. This can clearly be done independently of the choices already made above. The condition (\ref{lowexample-e1}) is replaced by $3^{m-4}\geq 8$. Since $9=3^2>8$, it suffices to take $m-4\geq 2$, so that $m\geq 6$.
\end{proof}

In the proof of Proposition \ref{prop:countex1}, the
fact that $A$ is a set is guaranteed in the simplest possible way by forcing both $A$ mod ${p^n}$ and $A$ mod ${q^m}$ to be sets. However, it would be enough to assume that one of them is a set (with an arbitrary bijection between this set and the other multiset). It could be possible to lower the exponents $n$ and $m$ further so that both $A$ mod ${p^n}$ and $A$ mod ${q^m}$ are multisets, but then the construction requires further analysis and details.
There is no such construction with $|A|$ equal to  $2^2\cdot 3^3$ or any of its divisors.

We expect that there are other choices of primes (not necessarily $p=2$, $q=3$) for which similar examples could be constructed. However, Proposition \ref{prop-large-p} below shows that $p$ and $q$ cannot be chosen completely arbitrarily in this type of examples.

\begin{proposition}\label{prop-large-p}
Let $1\leq a< n$ and $1\leq b< m$. Assume that $p>q^b$ and
$$\Phi_{p^n}\dots \Phi_{p^{n-a+1}}\Phi_{q^{m}}\dots \Phi_{q^{m-b+1}}\Phi_{pq}\mid A$$
for some $A\in\calm^+(\ZZ_{p^nq^m})$. Then
$|A|>p^aq^b$.
\end{proposition}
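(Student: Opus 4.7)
The plan is to argue by contradiction from the assumption $|A| = p^aq^b$. Since $\Phi_{p^k}(1) = p$ for $k \geq 1$, the divisibility $\Phi_{p^{n-a+1}}\cdots \Phi_{p^n} \mid A$ gives $p^a \mid |A|$, and similarly $q^b \mid |A|$; hence $p^aq^b \mid |A|$. If I can rule out $|A| = p^aq^b$, then $|A|$ is a strictly larger positive multiple of $p^aq^b$, so $|A| > p^aq^b$.

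The first step extracts the residue distributions mod $p$ and mod $q$. Observe that $\prod_{k=n-a+1}^{n}\Phi_{p^k}(X) = F^{p^n}_{p,a}(X) = (X^{p^n}-1)/(X^{p^{n-a}}-1)$, so the long fiber polynomial $F^{p^n}_{p,a}(X)$ divides $A(X) \bmod X^{p^n}-1$ in $\ZZ[X]$. Distinct translates of this long fiber lie in disjoint cosets of $p^{n-a}\ZZ_{p^n}$, so non-negativity of $A$ yields a non-negative integer expansion (as in Proposition \ref{lma:longfiberslincom}): $A \bmod p^n$ is a sum of exactly $q^b$ long $p^a$-fibers. Since $a < n$, each such fiber is supported in a single residue class mod $p$, so $w_A^p(i) = p^a r_i$ for some $r_i \in \ZZ_{\geq 0}$ with $\sum_i r_i = q^b$. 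Symmetrically, $w_A^q(j) = q^b s_j$ with $\sum_j s_j = p^a$. The hypothesis $\Phi_{pq} \mid A$ combined with Lemma \ref{structure-thm} gives $w_A^{pq}(i,j) = P_j + Q_i$ with $P_j, Q_i \in \ZZ_{\geq 0}$, and summing in each coordinate yields
$$ P + qQ_i = p^a r_i \ \ (\forall i), \qquad pP_j + Q = q^b s_j \ \ (\forall j), $$
where $P = \sum_j P_j$, $Q = \sum_i Q_i$, and $pP + qQ = p^aq^b$.

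Reducing each system modulo the opposite prime and using $\gcd(p,q) = 1$: all $r_i$ are congruent mod $q$, so $\sum r_i = q^b$ forces $q \mid r_i$, giving $r_i = q\tilde r_i$ with $\sum \tilde r_i = q^{b-1}$; symmetrically $s_j = p\tilde s_j$ with $\sum \tilde s_j = p^{a-1}$. Moreover $p \mid Q$ and $q \mid P$. Writing $Q = pQ^{\ast}$ and solving, the full one-parameter family of solutions is
$$ P_j = q^b \tilde s_j - Q^{\ast}, \qquad Q_i = p^a \tilde r_i - q^{b-1}p^{a-1} + Q^{\ast}, $$
with $Q^{\ast} \in \ZZ_{\geq 0}$ and the $\tilde s_j, \tilde r_i$ as above.

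The contradiction comes from the growth hypothesis. Since $\sum_i \tilde r_i = q^{b-1} < p$ and the $\tilde r_i$ are $p$ non-negative integers, at least one of them is zero, so $\min_i \tilde r_i = 0$. Non-negativity of every $Q_i$ then forces $Q^{\ast} \geq q^{b-1}p^{a-1}$, while non-negativity of every $P_j$ forces $Q^{\ast} \leq q^b \min_j \tilde s_j$. Combined with the averaging bound $q \min_j \tilde s_j \leq \sum_j \tilde s_j = p^{a-1}$, this pinches $q^{b-1}p^{a-1} \leq Q^{\ast} \leq q^{b-1}p^{a-1}$, forcing $\min_j \tilde s_j = p^{a-1}/q$; but $\tilde s_j \in \ZZ_{\geq 0}$ and $q \nmid p^{a-1}$ since $\gcd(p,q) = 1$, a contradiction. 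I expect the main obstacle to be isolating the correct coupling between the long-fiber decomposition on scale $p^n$ and the Lemma \ref{structure-thm} decomposition on scale $pq$; once both constraints are reduced to a single free parameter $Q^{\ast}$, the hypothesis $p > q^b$ squeezes its feasible range down to an impossibility via pigeonhole and the coprimality of $p$ and $q$.
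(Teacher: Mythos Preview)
Your proof is correct and shares the same opening moves as the paper's: both establish $p^a\mid w_A^p(i)$ for every residue $i\bmod p$, invoke Lemma~\ref{structure-thm} to write $w_A^{pq}(i,j)=P_j+Q_i$ with nonnegative $P_j,Q_i$, and use the hypothesis $p>q^b$ via pigeonhole to force an empty residue class mod $p$. Where you diverge is in the endgame. The paper observes that once some $w_A^p(i_0)=0$, the identity $P_j+Q_{i_0}=0$ with $P_j,Q_{i_0}\ge 0$ kills every $P_j$; hence $A\bmod pq$ is a sum of $q$-fibers only, so $\Phi_q\mid A$, and together with the $b$ given $q$-power divisors this yields $q^{b+1}\mid |A|$, contradicting $|A|=p^aq^b$. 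Your route instead first extracts the congruences $q\mid r_i$, $p\mid s_j$, writes out the full one-parameter family in $Q^\ast$, and pinches $Q^\ast$ between equal bounds to force the non-integer value $\min_j\tilde s_j=p^{a-1}/q$. Both arguments are valid; the paper's is shorter because it exploits the vanishing column immediately rather than carrying the symmetric $s_j$ data through a parametrization. Your detour through $q\mid r_i$ is in fact unnecessary: since $\sum_i r_i=q^b<p$ already, pigeonhole gives $r_{i_0}=0$ directly, and then $P+qQ_{i_0}=0$ forces $P=0$, collapsing your system to $Q=q^b s_j$ for all $j$ and hence $s_j=p^a/q\notin\ZZ$ in one line.
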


\begin{proof}
Based on the prime power cyclotomic divisors, we have
$p^aq^b\mid |A|$, hence $|A| \ge p^aq^b$. Furthermore,
$|A_i|=|A \cap \Lambda(i,p)|$ is divisible by $p^a$ for every $i=0,1,\ldots , p-1$.

Assume indirectly that $|A|=p^aq^b$. Since $p>q^b$, we have that $|A_i|$ is zero for some $i$.  On the other hand, since $\Phi_{pq} \mid A$, by Lemma \ref{structure-thm} we have that $A \bmod{pq}$ is a nonnegative linear combination of $p$-fibers and $q$-fibers. With $A_i=\emptyset$ for some $i$, $A$ must in fact be a sum of $q$-fibers only, so that $\Phi_q \mid A$. It follows that $q^{b+1}\mid A$, contradicting the assumption that $|A|=p^aq^b$.
    \end{proof}


In the next example, the exponents of $p$ and $q$ in the extra composite divisor of $A(X)$ are higher than those in the prime power divisors.

\begin{proposition}\label{prop:countex2}
Let $M=p^4q^4$, $p=2,q=3$. There exists a set $A\subset \Z_M$ such that  $$\Phi_{p}\Phi_{p^{2}}\Phi_{p^{3}}\Phi_{q}\Phi_{q^{2}}\Phi_{M}\mid A$$ and $|A|=p^3q^2=72$.
\end{proposition}
\begin{proof} 
The following table represents a multiset $B\in \mathcal{M}^{+}(\mathbb{Z}_{72})$, where the cyclic group $\Z_{72}$ is written as $\Z_8 \oplus \Z_9$, rows represent cosets of $\Z_9$, and columns represent cosets of $\Z_8$. Similarly to (\ref{table1}), the entry in the $i$-th row and $j$-th column is equal to $w^{72}_B(b)$, where $b$ is the element of $\Z_{72}$ such that $b\equiv i$ mod $8$ and $b\equiv j$ mod $9$.

\begin{center}
\begin{tabular}{ | m{1cm} | m{1cm} | m{1cm} | m{1cm}| m{1cm} | m{1cm} | m{1cm} |m{1cm}| m{1cm} |}
  \hline
  5 & 0 & 0 & 0& 0& 2 &0 &0 &2 \\
  \hline
 3 & 4 & 0 & 0& 0& 2 &0 &0 &0 \\
  \hline
  0 & 0 & 5 & 2& 0& 0 &0 &0 &2 \\
  \hline
  0& 0 & 3 & 2& 0& 0 &0 &4 &0 \\
  \hline
  0 & 0 & 0 & 0& 5& 2 &0 &0 &2 \\
  \hline
  0 & 4 & 0 & 0& 3& 2 &0 &0 &0 \\
  \hline
  0 & 0 & 0 & 0& 0& 0 &5 &2 &2 \\
  \hline
  0 & 0 & 0 & 4& 0& 0 &3 &2 &0 \\
  \hline
\end{tabular}
\end{center}

It is easy to verify that the entries in each column add up to $8$, and the entries in each row add up to 9. In other words, we have $|B\cap \Lambda^{72}(x, 9)|=8$ and $|B\cap \Lambda^{72}(x, 8)|=9$ for all $x\in \Z_{72}$.
This guarantees that
\begin{equation}\label{e-table11}
\Phi_{p}\Phi_{p^{2}}\Phi_{p^{3}}\Phi_{q}\Phi_{q^{2}}\mid B.
\end{equation}

Let $M= p^4q^4$.
We want to construct a set $A\subset \Z_M$ such that $B\equiv A \bmod{p^3q^2}$ and, furthermore, $\Phi_{M}\mid A$.
Let  $\pi: \Z_{M}\to  \Z_{p^3q^2}$ be the natural projection defined by $\pi(x)=x\bmod p^3q^2$. Then for each $x\in\Z_{p^3q^2}$, its preimage $\pi^{-1}(x)=\Lambda(x,p^3q^2)$ contains at least two grids $\Lambda(y,p^3q^3)$ and $\Lambda(z,p^3q^3)$ disjoint from each other.

Each positive entry (2,~3,~4,~5) in the table is a nonnegative integer coefficient linear combination of $2$ and $3$. Accordingly, for each $x\in\Z_{p^3q^2}$ such that $w^{72}_B(x)\neq 0$, we may define $A\cap \Lambda(x,p^3q^2)$ to be
 either just a single $2$-fiber, or a single $3$-fiber, or two $2$-fibers, or a $3$-fiber and a $2$-fiber, where each fiber is on scale $M$.
Furthermore, in those cases when $A\cap \Lambda(x,p^3q^2)$ consists of two fibers, we may place them in different $p^3q^3$-grids, guaranteeing that they do not overlap. Hence $A$ is a set, we have $A\equiv B$ mod 72, and, by Proposition \ref{cuboid},
$\Phi_{p^4q^4} \mid A$. Since $p,p^2,p^3,q,q^2$ all divide 72, $A$ inherits the property (\ref{e-table11}) from $B$. Hence $A$ satisfies all conclusions of the theorem.
\end{proof}

We remark that the same proof would also work for any $M=p^m q^n$ such that
$$\frac{D(M)}{p^3q^2}=\frac{p^{m-1}q^{n-1}}{p^3q^2}= p^{m-4}q^{n-3}\geq 2.$$
For example, we could take $M= p^5q^3$.

In Proposition \ref{prop:countex1} and \ref{prop:countex2}, we used $p=2$ and $q=3$. We now give a similar construction
for any pair of distinct odd primes $p$ and $q$. If one of the primes is 2, then the construction can be easily modified (and it is somewhat simpler), but we omit the details. We will need two easy number-theoretic lemmas.

\begin{lemma}\label{lem:pq-is-the-sum-of-p-and-q}
Let $p,q$ be distinct primes. If $pq\le K\in \NN$, then there exist $s_K,r_K\in \NN_0$ such that $s_Kp+r_Kq=K$.
\end{lemma}

\begin{lemma}\label{lem:mod2^t}
Let $p,q$ be distinct odd primes. Then for every $\ell \in \NN$ there exist $a,b \in \NN$ such that $p^a \equiv q^b \equiv 1\bmod{2^\ell}$.
\end{lemma}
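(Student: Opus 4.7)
The plan is to deduce this from Euler's theorem applied to the multiplicative group modulo $2^\ell$. Since $p$ and $q$ are odd primes, we have $\gcd(p, 2^\ell) = \gcd(q, 2^\ell) = 1$, so both $p$ and $q$ represent units in the finite abelian group $(\ZZ/2^\ell\ZZ)^*$, whose order is $\phi(2^\ell) = 2^{\ell-1}$ (for $\ell \geq 1$).

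By Euler's theorem (equivalently, Lagrange's theorem applied to the cyclic subgroups generated by $p$ and $q$), raising any unit to the $\phi(2^\ell)$-th power yields $1$ modulo $2^\ell$. Thus, taking $a = b = 2^{\ell-1}$ immediately gives $p^a \equiv q^b \equiv 1 \bmod 2^\ell$, which establishes the claim. (One could also take $a$ and $b$ to be the actual multiplicative orders of $p$ and $q$ modulo $2^\ell$, which are divisors of $2^{\ell-1}$, but no sharper bound is needed for the applications.)

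There is essentially no obstacle here: the result is a direct consequence of the finiteness of $(\ZZ/2^\ell\ZZ)^*$ together with the coprimality of the odd primes $p, q$ with the modulus $2^\ell$. The entire proof fits in one or two lines.
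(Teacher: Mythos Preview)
Your proof is correct; it is the standard one-line argument via Euler's theorem. The paper itself omits the proof entirely, labeling this an ``easy number-theoretic lemma,'' so your approach is exactly what was intended.
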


\begin{theorem}\label{thm:general-example}
Let $p,q$ be distinct odd primes. We define the parameters $k,a,b,n,m\in\NN$, in that order, so that the following hold.
\begin{itemize}
\item[(i)] Choose $k$ so that $2^k\geq pq+1$.
\item[(ii)] Choose $a,b$ so that the conclusion of Lemma \ref{lem:mod2^t} holds with $\ell=2k$. Thus, we have $p^a=C_1 4^k +1$ and $q^b=C_2 4^k +1$ for some $C_1,C_2\in\NN$.
\item[(iii)] Assume that $p^a>q^b$ (otherwise we interchange $p$ and $q$).
\item[(iv)] Define $N=p^aq^b$ and $M=p^nq^m$, with
$n>a$ and $m>b$ large enough so that  $D(M)/N \geq q^b$.
\end{itemize}
Then there is a set $A$ of size $|A|=N=p^a q^b$ that satisfies
\begin{equation}\label{e-tables20}
\Phi_p\Phi_{p^2}\cdots\Phi_{p^a}\Phi_q \Phi_{q^2}\cdots\Phi_{q^b} \Phi_M \mid A.
\end{equation}
\end{theorem}

\begin{proof}
As in the proof of Proposition  \ref{prop:countex2}, we first construct a multiset $B \in \mathcal{M}^+(\Z_{N})$
such that $|B|=N$,
\begin{equation}\label{e-tables12}
\Phi_p\Phi_{p^2}\cdots\Phi_{p^a}\Phi_q \Phi_{q^2}\cdots\Phi_{q^b} \mid B,
\end{equation}
and each entry $w^N_B(x)$ for $x\in\Z_N$ is either zero or large enough so that we can apply Lemma \ref{lem:pq-is-the-sum-of-p-and-q}. We then lift it to a set $A \subset \Z_M$ such that $A \equiv B\bmod N$ and, additionally, $\Phi_M \mid A$.

We write $\Z_{N}$ as $\Z_{p^a}\oplus \Z_{q^b}$. The set $B$ will be defined via a $q^b\times p^a$ matrix with entries $w^{N}_B(x_{ij})$, where $x_{ij}$ is the element of $\Z_{N}$ such that $x_{ij}\equiv i$ mod $q^b$ and $x_{ij} \equiv j$ mod $p^a$.
We start with some building blocks.
We define the square matrices $G$ and $H$ so that $G$ is a $2^k\times 2^k$ matrix with all entries equal to $2^k$, and $H$ is a $2^{2k}\times 2^{2k}$ matrix with blocks equal to $G$ along the diagonal and zeroes everywhere else. We use $0$ to denote zero matrices as needed.

\[
G=
    \underbrace{
    \begin{array}{|ccc|}
        \hline
        2^k &  \cdots & 2^k \\
        \vdots & \ddots & \vdots \\
        2^k & \cdots & 2^k \\
        \hline
    \end{array}
    }_{2^k}
\ ,\quad\quad
H=
    \underbrace{
    \begin{array}{|c|c|c|c|}
        \hline
        G  & 0& \cdots & 0 \\
        \hline
        0& G & \cdots & 0\\
        \hline
        \vdots & \vdots & \ddots & \vdots \\
        \hline
     0 & 0  & \cdots & G \\
        \hline
    \end{array}
    }_{2^{k}\text{ blocks}}
\]
Note that, in both $G$ and $H$, each row and each column adds up to $2^k\cdot 2^k = 4^k$.

Let $Y$ be the matrix with $p^a=C_1 4^k +1$ columns and $q^b=C_2 4^k +1$ rows, defined as follows. We start with a matrix with $C_1 4^k $ columns and $C_2 4^k$ rows that consists of concatenated blocks equal to $H$. (The total number of such blocks is $C_1C_2$.) Then we add one row and one column where the entries are all equal to 1, as shown below. For $n\geq 1$, we use $1_n$ to denote the row vector $(1,1,\dots,1)$ with $n$ entries, and $1_n^t$ to denote its transpose.

\[
Y=
    \begin{array}{|c|c|c||c|}
        \hline
        H  &  \cdots & H & 1^t_{4^k} \\
        \hline
        \vdots  & \ddots & \vdots & \vdots \\
        \hline
     H  & \cdots & H & 1^t_{4^k}\\
        \hline\hline
       1_{4^k}  & \cdots &  1_{4^k} & 1    \\
       \hline
    \end{array}
\]

This matrix defines a multiset $B_0$ in $\ZZ_{N}$. Note that each row of $Y$  adds up to $C_1 4^k+1=p^a$, and
each column of $Y$  adds up to $C_2 4^k +1=q^b$. It follows that $|B_0|=p^aq^b=N$, and
\begin{equation}\label{e-tables10}
\Phi_p\Phi_{p^2}\cdots\Phi_{p^a}\Phi_q \Phi_{q^2}\cdots\Phi_{q^b}\mid B_0.
\end{equation}

We now define an ``adding a cuboid" operation\footnote{
In terms of mask polynomials, the operation in (\ref{add-a-cuboid}) corresponds to adding a polynomial of the form $X^c(1-X^{d_1 N/p^a})(1-X^{d_2N/q^d})$, with $d_1\in\{1,2,\dots,p^a-1\}$ and $d_2\in\{1,2,\dots,q^b-1\}$. This means that we are adding either an $N$-cuboid as defined in (\ref{def-delta}), or else an $N'$-cuboid for some $pq\mid N'\mid N$.
}
that preserves row and column sums. Given a $2\times 2$ submatrix of $Y$ with entries $a_1,a_2,a_3,a_4$, we may replace it by a submatrix with entries $a_1+1,a_2-1,a_3-1,a_4+1$ as shown below:
\begin{equation}\label{add-a-cuboid}
\begin{array}{c}
    \begin{array}{|c|c|c|}
        \hline
        a_1 &a_2 \\
        \hline
        a_3 & a_4 \\
        \hline
    \end{array}
    \quad
    \to
    \quad
    \begin{array}{|c|c|}
        \hline
        a_1+1 & a_2-1 \\
        \hline
        a_3 -1 & a_4+1 \\
        \hline
    \end{array}
    =\begin{array}{|c|c|}
        \hline
        a_1 & a_2 \\
        \hline
        a_3 & a_4 \\
        \hline
    \end{array}+\begin{array}{|c|c|}
        \hline
        1 & -1 \\
        \hline
        -1 & 1 \\
        \hline
    \end{array}
\end{array},
\end{equation}
and the matrix thus obtained has the same row sums and column sums as $Y$. The same remains true if we iterate a sequence of such operations or its inverses.

Our goal is to use the operation in (\ref{add-a-cuboid}) to get a new multiset $B\in\calm^+(\ZZ_{N})$ so that $|B|=|B_0|$, (\ref{e-tables10}) continues to hold for $B$, and additionally all nonzero entries in $B$ are large enough so that we could apply Lemma \ref{lem:pq-is-the-sum-of-p-and-q}. Specifically, we need to replace all the 1 entries by either 0 or a number greater than or equal to $pq$.

Recall that $p^a>q^b$, so that $C_1>C_2$. Then $Y$ has the block decomposition
\[
Y=
    \begin{array}{|c||c|c|c|c ||c|}
        \hline
        Y_0 & Y_1 & Y_2    &  \cdots & Y_{(C_1-C_2)2^k} &   1^t_{C_2 4^k} \\
        \hline\hline
       1_{C_2 4^k } & 1_{2^k} & 1_{2^k} &  \cdots &  1_{2^k} & 1    \\
       \hline
    \end{array},
\]
where $Y_0$ is a square matrix of size $C_2 4^k$, and each $Y_j$ matrix with $j\geq 1$ has $C_2 4^k$ rows and $2^k$ columns. We now apply the operation in (\ref{add-a-cuboid}) to the blocks shown above, indicating only those entries that are involved in the operation (all other entries will remain unchanged).

We first remove the 1 entries below $Y_0$ and to the right of it, by adding all cuboids whose top left vertex is on the diagonal of $Y_0$, bottom left vertex is in the last row of $Y$, top right vertex is in the last column of $Y$, and bottom right vertex is at the bottom right corner of $Y$. (There are $C_2 4^k$ such cuboids.) This is illustrated below.
\[
    \begin{array}{|c||c|}
        \hline
        Y_0 & 1^t_{C_2 4^k}  \\
      \hline
        \hline
        1_{C_2 4^k } & 1\\
        \hline
    \end{array}
=
        \begin{array}{|cccc||c|}
            \hline
                2^k & \cdots & \cdots & \cdots & 1 \\
                \cdots & 2^k & \cdots &\cdots & 1\\
                \vdots & \vdots &  \ddots &\vdots & \vdots \\
                \cdots & \cdots & \cdots & 2^k & 1\\
                \hline\hline
                1 & 1 & \cdots & 1 & 1\\
                \hline
            \end{array}
\quad \to \quad
        \begin{array}{|cccc||c|}
            \hline
                2^k+1 & \cdots & \cdots & \cdots & 0 \\
                \cdots & 2^k+1 & \cdots &\cdots & 0\\
                \vdots & \vdots &  \ddots &\vdots & \vdots \\
                \cdots & \cdots & \cdots & 2^k+1 & 0\\
                \hline\hline
                0 & 0 & \cdots & 0 & C_2 4^k + 1\\
                \hline
            \end{array}
\]
At this point, we have replaced $Y$ with the matrix
\[
    \begin{array}{|c||c|c|c|c ||c|}
        \hline
        Y'_0 & Y_1 & Y_2    &  \cdots & Y_{(C_1-C_2)2^k} &   0 \\
        \hline\hline
       0 & 1_{2^k} & 1_{2^k} &  \cdots &  1_{2^k} & C_2 4^k + 1   \\
       \hline
    \end{array},
\]
where all entries of $Y'_0$ are either zero or at least $2^k$. We still need to remove all the 1 entries below the blocks $Y_j$ with $j\geq 1$. Pick one such block, and note that it contains at least one square subblock equal to $G$. We will only work with that subblock and the row of 1's at the bottom; all other entries will remain unchanged. We add all cuboids whose top left vertex is on the diagonal of that block except for the last diagonal entry, bottom left vertex is in the last row of $Y$, and top right vertex is in the last column of $Y_j$. (There are $2^k -1$ such cuboids.) This is shown below.

\[
    \begin{array}{|c|}
        \hline
        Y_j \\
        \hline\hline
       1_{2^k} \\
       \hline
    \end{array}
=
\begin{array}{|ccccc|}
\hline
\vdots & \vdots & \vdots & \vdots & \vdots \\
        \hline \hline
        2^k & \cdots    &  \cdots & \cdots & 2^k  \\
         \cdots & 2^k      &  \cdots  &\cdots  &2^k  \\
         \vdots & \vdots  & \ddots & \vdots & \vdots  \\
         \cdots  & \cdots   &  \cdots &  2^k & 2^k  \\
         \cdots  & \cdots   &  \cdots &  \cdots & 2^k  \\
                                   \hline\hline
      \vdots  & \vdots & \vdots & \vdots & \vdots \\
        \hline \hline
        1 & 1 & \cdots  & 1 & 1
         \\
       \hline
    \end{array}
\quad\to\quad
  \begin{array}{|ccccc|}
\hline
\vdots & \vdots & \vdots & \vdots & \vdots \\
        \hline \hline
        2^k +1 & \cdots &\cdots      &  \cdots & 2^k-1  \\
         \cdots & 2^k +1 &\cdots       &  \cdots & 2^k -1 \\
         \vdots & \vdots  & \ddots & \vdots &\vdots \\
         \cdots & \cdots   & 2^k +1    &  \cdots & 2^k -1 \\
         \cdots & \cdots   & \cdots     &  \cdots & 2^k \\
                          \hline\hline
      \vdots & \vdots & \vdots & \vdots & \vdots \\
        \hline \hline
        0 & 0 & 0 & \cdots & 2^k
         \\
       \hline
    \end{array}
\]

Applying this procedure to all blocks $Y_j$, we get a matrix $Y'$ with row sums $p^a$,
column sums $q^b$, and all entries either zero or at least $2^k-1$. Further, all entries of $Y'$ are bounded by $C_24^k+1=q^b$.
The corresponding multiset $B\in\calm^+(\ZZ_{N})$
has total weight $|B|=N$ and satisfies (\ref{e-tables12}).

We now construct a set $A\subset\ZZ_M$ that inherits the above properties from $B$ and, additionally, satisfies $\Phi_M\mid A$.
As in the proof of Proposition \ref{prop:countex2}, let $\pi: \Z_{M}\to  \Z_{N}$ be the natural projection defined by $\pi(x)=x\bmod N$. For each $x\in\Z_{N}$, its preimage $\pi^{-1}(x)=\Lambda(x,N)$ is a union of $D(M)/N$ pairwise disjoint grids $\Lambda(y,D(M))$ with $y\in\pi^{-1}(x)$.

By the assumption (i), each nonzero entry in $B$ is greater than or equal to $pq$. By Lemma \ref{lem:pq-is-the-sum-of-p-and-q}, we can write each such entry as a linear combination of $p$ and $q$ with coefficients in $\NN_0$; note that the sum of these coefficients is bounded trivially by the size of the largest entry in $Y'$, that is, by $q^b$. Thus, for each $x\in\Z_N$ such that $w^N_B(x)\neq 0$, we may define $A\cap\Lambda(x,N)$ as a union of at most $q^b$ fibers in the $p$ and $q$ direction. By the assumption (iv) of the theorem, we have $D(M)/N>q^b$, so that we may place each such fiber in a different $D(M)$-grid, ensuring that $A$ is a set. By Proposition \ref{cuboid}, it follows that $\Phi_M\mid A$. Furthermore, since $A\equiv B$ mod $N$, we have $|A|=|B|=N$ and (\ref{e-tables20}) is inherited from (\ref{e-tables12}).
Hence, $A$ satisfies all requirements of the theorem.
\end{proof}

We note that the cuboid-adding operation (\ref{add-a-cuboid}) was also the key to the constructions in Propositions \ref{prop:countex1} and \ref{prop:countex2}. The matrices in both of these propositions can be obtained, by adding and subtracting cuboids, from a ``uniform" matrix with all entries equal to 1.

We did not try to optimize the exponents $m$ and $n$ in Theorem~\ref{thm:general-example}, opting instead for clarity of the presentation. For example, we could have used (\ref{add-a-cuboid}) more efficiently to get $Y'$ with all nonzero entries of size about $2^k$, reducing the size of $m$ and $n$.

The construction in Theorem~\ref{thm:general-example} admits two natural directions of generalization. One is that we could
modify it to guarantee \emph{simultaneous divisibility} by a block of the form
\[
\prod_{L : L_0 \mid L \mid M} \Phi_L(X),
\]
where \( L_0 = p^\alpha q^\beta \mid M = p^n q^m \). This can be achieved by replacing the single \( p \)- and \( q \)-fibers with long \( p^{n - \alpha + 1} \)- and \( q^{m - \beta + 1} \)-fibers, using Proposition~\ref{lma:longfiberslincom}. The argument remains valid in this setting, as long as the associated multiplicative constraints are satisfied and the disjointness of the fiber structure is maintained.

The construction also extends inductively to the case of \emph{arbitrary finite sets of primes} $\{p_1, \dots, p_r\}$, provided that the parameters involved are chosen sufficiently large. In particular, one may invoke a multivariate analogue of Lemma~\ref{lem:mod2^t} to guarantee the necessary congruences modulo $2^\ell$, and apply the additive decomposition as in Lemma~\ref{lem:pq-is-the-sum-of-p-and-q} recursively. The key structural requirements (separability of fibers, modular compatibility, and size bounds) scale in a controlled way as the number of primes increases. However, this would complicate the construction considerably, and in any case simpler examples (such as Example \ref{ex-3primes1scale}) are available when 3 or more prime factors are allowed.


\section{Lower bounds under the (T2) assumption}
\label{T2-section}

\subsection{Structure results under the (T2) assumption}
\label{sec-lowerT2-basic}

Assume that $M = \prod_{i=1}^{K} p_i^{n_i}$ and that $K \geq 2$.
In this section we prove structure results for sets $A\subset\ZZ_M$ obeying the conditions (T1) and (T2).
We first prove the short counting lemma mentioned in the introduction.

\begin{lemma}\label{prime-power-disjoint}
\cite[Lemma 2.1]{CM}
Assume that $A\oplus B=\ZZ_M$. Let $S_A^*$ be the set of prime powers
$p^\alpha$ such that $\Phi_{p^\alpha}(X)$ divides $A(X)$ and let $S_M^*$ be the set of all prime powers that divide $M$. Then
$$
|A|=\prod_{s\in S_A^*} \Phi_s(1),\ |B|=\prod_{s\in S_B^*} \Phi_s(1).
$$
Moreover, the sets $S_A^*$ and $S_B^*$ are disjoint, and $S_A^*\cup S_B^*=S_M^*$.
\end{lemma}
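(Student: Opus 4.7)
The plan is to combine the key congruence $A(X)B(X) \equiv 1 + X + \cdots + X^{M-1} \pmod{X^M - 1}$ from (\ref{mask-e1}) with a pincer argument on prime-power valuations. First I would observe that for any prime power $p^\alpha$ with $p^\alpha \mid M$ and $\alpha \geq 1$, the cyclotomic polynomial $\Phi_{p^\alpha}(X)$ divides both $X^M - 1$ and $(X^M-1)/(X-1) = 1 + X + \cdots + X^{M-1}$, and therefore divides $A(X)B(X)$ in $\mathbb{Z}[X]$. Since $\Phi_{p^\alpha}$ is irreducible over $\mathbb{Q}$, it divides $A(X)$ or $B(X)$, so $p^\alpha \in S_A^* \cup S_B^*$; this gives $S_M^* \subseteq S_A^* \cup S_B^*$. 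Evaluating the congruence at $X = 1$ yields $|A|\cdot|B| = M$.

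Next, for each prime $p$ with $p^{n_p} \parallel M$, I would set $a_p := \#\{\alpha \geq 1 : p^\alpha \in S_A^*\}$ and $b_p$ analogously. Since distinct $\Phi_{p^\alpha}$ are coprime irreducibles in $\mathbb{Q}[X]$, their product divides $A(X)$; evaluating at $X=1$ with $\Phi_{p^\alpha}(1) = p$ gives $p^{a_p} \mid |A|$, and likewise $p^{b_p} \mid |B|$. The identity $|A|\cdot|B|=M$ forces $a_p + b_p \leq n_p$. On the other hand, from the first step each of the $n_p$ prime powers $p, p^2, \dots, p^{n_p}$ lies in $S_A^* \cup S_B^*$, so counting contributions with multiplicity gives $a_p + b_p \geq n_p$.

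The resulting equality $a_p + b_p = n_p$ then forces three conclusions simultaneously: no $p^\alpha$ with $\alpha > n_p$ can lie in $S_A^* \cup S_B^*$, since such an element would push $a_p + b_p$ past $n_p$, so $S_A^*, S_B^* \subseteq S_M^*$; there is no overlap between $S_A^*$ and $S_B^*$ among the powers of $p$ (any overlap would again violate $a_p + b_p \leq n_p$), which gives $S_A^* \cap S_B^* = \emptyset$; and the valuation bounds $p^{a_p} \mid |A|$ and $p^{b_p} \mid |B|$ must be sharp because they multiply to the full $p$-part of $M$. Since every prime divisor of $|A|$ must itself divide $M$ (as $|A| \mid M$), assembling these sharp valuations over all primes $p \mid M$ yields $|A| = \prod_p p^{a_p} = \prod_{s \in S_A^*} \Phi_s(1)$, and the same for $B$. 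The argument is essentially bookkeeping once the pincer is set up, so I do not anticipate any real obstacle; the main idea is simply that the size identity $|A||B|=M$ (an upper bound on $a_p + b_p$) and the cyclotomic covering of $\{p,p^2,\dots,p^{n_p}\}$ (a lower bound) are tight simultaneously.
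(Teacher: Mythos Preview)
Your proposal is correct and follows essentially the same approach as the paper: both arguments combine the covering $S_M^* \subseteq S_A^* \cup S_B^*$ (from the factorization of $1+X+\cdots+X^{M-1}$) with the divisibility $\prod_{s\in S_A^*}\Phi_s(1)\mid |A|$ and the size constraint $|A|\,|B|=M$ to pinch everything to equality. Your version is simply more explicit about the bookkeeping, in particular in ruling out prime powers $p^\alpha$ with $\alpha>n_p$ or $p\nmid M$, which the paper's proof handles implicitly in its divisibility chain.
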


\begin{proof}
The equality $S_A^*\cup S_B^*=S_M^*$ follows from (\ref{mask-e2}). Furthermore, we have $\Phi_s(1)=p$ if $s=p^\alpha$ is a prime power, and $\Phi_s(1)=1$ otherwise. This implies the divisibility chain
$$
M=  \prod_{s\in S_M^*} \Phi_s(1)
\ \Big| \  \prod_{s\in S_A^*} \Phi_s(1)  \prod_{s\in S_B^*} \Phi_s(1)
\ \Big| \  A(1)B(1)=M.
$$
Hence equality must hold at each step, and the lemma follows.
\end{proof}

Recall that the truncation $A' \in \calm^+ (\mathbb{Z}_{M'})$ of a multiset $A\subset\calm^+(\ZZ_M)$ relative to a set of divisors $S\subset\cald(M)$ was introduced in Proposition \ref{prop:truncation} and defined formally in \eqref{eq:truncationweigthdefn}. The next lemma says that if $A$ obeys (T2), then its truncation $A'$ is uniformly distributed mod $M'$.

\begin{lemma}\label{lma:T2maximaltruncation}
Let $A \in \calm^+ (\mathbb{Z}_M)$ and set $S = S_A^*$, defined in Lemma \ref{prime-power-disjoint} . Let $A' \in \calm^+ (\mathbb{Z}_{M'})$ be the truncation of $A$ relative to $S$. If $A$ satisfies $(T2)$, then
\begin{equation}\label{eq:maximaldivisibilityforuniformT2}
1 + X + \cdots + X^{M' - 1} \, \big\vert \, A'(X).
\end{equation}
\end{lemma}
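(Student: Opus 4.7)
The goal is to show $\Phi_d(X) \mid A'(X)$ for every $d \mid M'$ with $d \neq 1$, since $1+X+\cdots+X^{M'-1} = \prod_{d \mid M', d \neq 1}\Phi_d(X)$ and the cyclotomic polynomials are pairwise coprime in $\QQ[X]$. Writing $d = \prod_i p_i^{\ell_i}$ with $\ell_i \in \{0,1,\dots,E_i\}$ and letting $I = \{i : \ell_i \geq 1\} \neq \emptyset$, we must produce each such divisibility from the hypotheses on $A$.

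The plan is to first use (T2) to enlarge the collection of cyclotomic divisors of $A$ before applying truncation. By hypothesis, $S = S_A^*$ consists of all prime powers $p_i^{\alpha}$ with $\Phi_{p_i^\alpha} \mid A$, so $\alpha \in \capexp_i(S)$. Using (T2) on any choice of prime powers $p_i^{\alpha_i} \in S$ indexed by a nonempty $I \subseteq \{1,\dots,K\}$ with $\alpha_i \in \capexp_i(S)$, we obtain $\Phi_N \mid A$ where $N = \prod_{i \in I} p_i^{\alpha_i}$. Let $\tilde S$ denote $S$ together with all such composite $N$. Since every exponent appearing in any element of $\tilde S$ already lies in some $\capexp_i(S)$, we have $\capexp_i(\tilde S) = \capexp_i(S)$ for every $i$, and hence the truncation map $\bbT_{\tilde S}$ from Corollary \ref{cor-trunk2} is identical to $\bbT_S$. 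In particular, the truncated multiset $A' \in \calm^+(\ZZ_{M'})$ relative to $S$ coincides with the one obtained relative to $\tilde S$, and its ambient cyclic group has the same $M' = \prod_i p_i^{E_i}$.

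Next, I would invoke the argument underlying Proposition \ref{prop:truncation}(ii) with $\tilde S$ in place of $S$: for each $N = \prod_{i \in I} p_i^{\alpha_{i,\ell_i}} \in \tilde S$, the fact that $\Phi_N \mid A$ passes through the truncation map $\bbT_S = \bbT_{\tilde S}$ (via Corollary \ref{cor-trunk1}, which only requires that every exponent $\alpha_{i,\ell_i}$ appearing in $N$ lies in $\capexp_i(S)$ — automatic here by construction of $\tilde S$), and then through the bijection $\mathbb{U}$, which sends $N$-fibers in $\mathbb{Z}_M$ to $N'$-fibers in $\mathbb{Z}_{M'}$, where $N' = \prod_{i \in I} p_i^{\ell_i}$. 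Applying Proposition \ref{cuboid} at both ends, we conclude $\Phi_{N'} \mid A'$. Ranging over all nonempty $I$ and all choices of $\ell_i \in \{1,\dots,E_i\}$ for $i \in I$, the values $N' = \prod_{i \in I} p_i^{\ell_i}$ sweep out exactly the nontrivial divisors of $M'$, completing the argument.

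The step requiring the most care is verifying that Proposition \ref{prop:truncation}(ii) applies to the enlarged set $\tilde S$ even though it was formulated for the original $S$. The point is that the proof of that proposition actually proves the stronger statement: for \emph{any} $N$ with $\Phi_N \mid A$ such that every prime-power exponent of $N$ lies in the corresponding $\capexp_i(S)$, one has $\Phi_{N'} \mid A'$ with $N'$ defined by the same reindexing. This is the only delicate place; everything else is bookkeeping about divisor lattices of $M'$.
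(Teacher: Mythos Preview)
Your proposal is correct and follows the same route as the paper's proof, which simply asserts that (T2) forces $\Phi_s(X)\mid A'(X)$ for every $1\neq s\mid M'$ and then factors $1+X+\cdots+X^{M'-1}$. You have supplied the justification the paper leaves implicit: enlarging $S=S_A^*$ to $\tilde S$ via (T2) does not change the exponent sets $\capexp_i$, hence the truncation is unchanged, and the proof of Proposition~\ref{prop:truncation}(ii) carries each $\Phi_N$ with $N\in\tilde S$ to the corresponding $\Phi_{N'}$ dividing $A'$.
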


\begin{proof}
    If $A$ satisfies $(T2)$, then $\Phi_s (X) \mid A' (X)$ for every $1 \neq s \mid M'$ by applying Proposition \ref{prop:truncation} (ii) for $S=S_A^*$. Since
    $$
    1 + X + \cdots + X^{M' -1} = \prod_{1 \neq s \mid M'} \Phi_s (X),
    $$
    the divisibility \eqref{eq:maximaldivisibilityforuniformT2} of $A' (X)$ then follows.
\end{proof}

\subsection{A diagonal argument}
\label{sec-lowerT2-diagonal}

Our first result carries no restrictions on the number of prime factors of $M$.
For each $i \in \{1,\ldots,K\}$, we use the notation
$$
\beta_i:=\max\{\beta\in\NN:\ \Phi_{p_i^\beta}\mid A\}.
$$

\begin{proposition}\label{prop:T2topdown}
Assume that $M : = \prod_{i=1}^{K} p_i^{n_i}$ for $K \geq 2$, where $p_1,\dots,p_K$ are distinct primes.
Suppose that $A \in \calm^+ (\mathbb{Z}_M)$ satisfies $(T2)$. Then, if there exists some $N = p_1^{\gamma_1} \cdots p_K^{\gamma_K}$ such that $\Phi_N (X) \mid A(X)$ and $\gamma_i > \beta_i$ for all $i \in \{1,\ldots,K\}$, then
$$
A(1) \geq \min(p_1, \cdots, p_K) \prod_{s \in S_A^*} \Phi_s (1).
$$
\end{proposition}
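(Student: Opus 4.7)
The plan is to reduce to a controlled truncated setting via Proposition~\ref{prop:truncation}, exploit (T2) to show that the truncation is uniformly distributed on a coarse grid, and then apply the Lam--Leung bound \eqref{e-lamleung} to each piece.

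For each $i$, let $m_i := |\capexp_i(S_A^*)|$, so that $\prod_i p_i^{m_i} = \prod_{s\in S_A^*}\Phi_s(1)$. I would take $S$ to consist of $S_A^*$, $N$, together with every (T2)-composite of elements of $S_A^*$; by the (T2) hypothesis, $\Phi_s\mid A$ for each $s \in S$. Since these composites contribute only exponents already present in $\capexp_i(S_A^*)$, we have $\capexp_i(S) = \capexp_i(S_A^*) \cup \{\gamma_i\}$, and because $\gamma_i > \beta_i = \max\capexp_i(S_A^*)$, the rank of $\gamma_i$ within $\capexp_i(S)$ is $m_i + 1$. Applying Proposition~\ref{prop:truncation} then produces $A' \in \calm^+(\ZZ_{M'})$ with $M' = \prod p_i^{m_i+1}$ and $|A'| = |A|$, satisfying $\Phi_{p_i^j} \mid A'$ for $1 \le j \le m_i$, every (T2)-composite of these, and (crucially) $\Phi_{M'}\mid A'$ coming from the truncation of $\Phi_N$.

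Next, I would show that $\tilde A' := A' \bmod M''$, where $M'' := D(M') = \prod p_i^{m_i}$, is uniform on $\ZZ_{M''}$. For every $s \mid M''$ with $s > 1$, the prime-power factors of $s$ lie in $\{p_i^j : 1 \le j \le m_i\}$, so $\Phi_s \mid A'$ by the truncated (T2) divisibilities above, and hence $\Phi_s \mid \tilde A'$. Since
$$
\prod_{1 < s \mid M''} \Phi_s(X) \;=\; 1 + X + \cdots + X^{M''-1}
$$
has degree $M''-1$, this forces the congruence $\tilde A'(X) \equiv c\,(1 + X + \cdots + X^{M''-1}) \bmod (X^{M''}-1)$ for the constant $c := |A|/M''$, which is a positive integer because the prime-power divisors give $M'' \mid |A'|$. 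Consequently every element of $\ZZ_{M''}$ has weight $c$ in $\tilde A'$, and equivalently every $D(M')$-grid $\Lambda$ in $\ZZ_{M'}$ satisfies $|A' \cap \Lambda| = c$.

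Finally, Lemma~\ref{grid-split} together with $\Phi_{M'} \mid A'$ gives $\Phi_{M'} \mid (A' \cap \Lambda)$ for each $D(M')$-grid $\Lambda$, and since $A' \cap \Lambda$ is a nonempty nonnegative multiset, \eqref{e-lamleung} yields $c = |A' \cap \Lambda| \ge \min_i p_i$. Combining,
$$
|A| \;=\; |A'| \;=\; c\cdot M'' \;\ge\; \min_i p_i \cdot \prod_{s\in S_A^*} \Phi_s(1),
$$
as required. The main obstacle I anticipate is the bookkeeping in the truncation step: one must verify that augmenting $S$ with (T2)-composites does not change the target $M'$ while still transporting the full (T2) structure of $A$ to $A'$. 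This is resolved by the observation that $\capexp_i(S)$ is insensitive to adding elements whose $p_i$-exponents already lie in $\capexp_i(S_A^*)$, so the truncation map, the value of $M'$, and the resulting $A'$ all depend only on the "extremal" element $N$.
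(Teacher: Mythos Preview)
Your argument is correct and follows essentially the same route as the paper: truncate relative to $S_A^*\cup\{N\}$, use the (T2) divisibilities to show that $A'\bmod M''$ is the constant multiset with weight $c=|A|/M''$, then apply Lemma~\ref{grid-split} and the Lam--Leung bound on each $D(M')$-grid to get $c\ge \min_i p_i$. The only cosmetic difference is that the paper packages the uniformity step as Lemma~\ref{lma:T2maximaltruncation} (via the auxiliary truncation $A''$ relative to $S_A^*$ alone), whereas you achieve the same effect by explicitly throwing the (T2)-composites into $S$ before truncating; since these composites do not enlarge $\capexp_i(S)$, both routes produce the same $M'$, $M''$, and $A'$.
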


In particular, this gives a negative answer to Question \ref{Q2} in the introduction under the additional assumption that the additional unsupported divisor $\Phi_N$ is as indicated in the proposition.

\begin{proof}
Choose $A' \in \calm^+ (\mathbb{Z}_{M'})$ to be the truncation of $A$ relative to $S = S_A^* \cup \{N\}$. Notice, then, that $(A' \equiv A'' \mod M'')$, where $A'' \in \calm^+ (\mathbb{Z}_{M''})$ is the truncation of $A$ relative to $S_A^*$. This is illustrated in Figure \ref{fig:maximalN} (where $M'$ and $M''$ are both labeled, for reference).

Proposition \ref{prop:truncation} combined with Lemma \ref{lma:T2maximaltruncation} then imply that
\begin{equation}\label{eq:completegrid}
A'(X) = w \big(1 + X + \cdots + X^{M'' - 1}\big) \mod X^{M''} - 1,
\end{equation}
for some integer weight $w \geq 1$.   We claim that $w \geq \min (p_1,\ldots,p_K)$.  This follows because, for each $x \in \mathbb{Z}_{M'}$,  we have
$$
w_{A}^{M''} (x) : = \sum\limits_{ \{\bar{x} \in \mathbb{Z}_N : \bar{x} \, \equiv \, x \textrm{ mod } M'' \}} w_A^{M'} (\bar{x}).
$$
However, since $D(M') = M''$, we see that  $\{\bar{x} \in \mathbb{Z}_{M'} : \bar{x} \, \equiv \, x \textrm{ mod } M'' \} = \Lambda (x, D(M'))$.  Since
$$
\Phi_{M'} (X) \mid A(X) \Leftrightarrow \Phi_{M'} (X) \mid (A \cap \Lambda (x,D(M'))),
$$
for each $x \in \mathbb{Z}_{M'}$,  the bound \eqref{e-lamleung} of Lam and Leung gives $w \geq \min (p_1,\ldots,p_K)$.  This, together with \eqref{eq:completegrid} and the fact that
$
 \prod_{s \in S_A^*} \Phi_s (1) = M'',
$
gives the result.
\end{proof}


\begin{figure}[h!]
    \centering
    \includegraphics[width=0.4\linewidth]{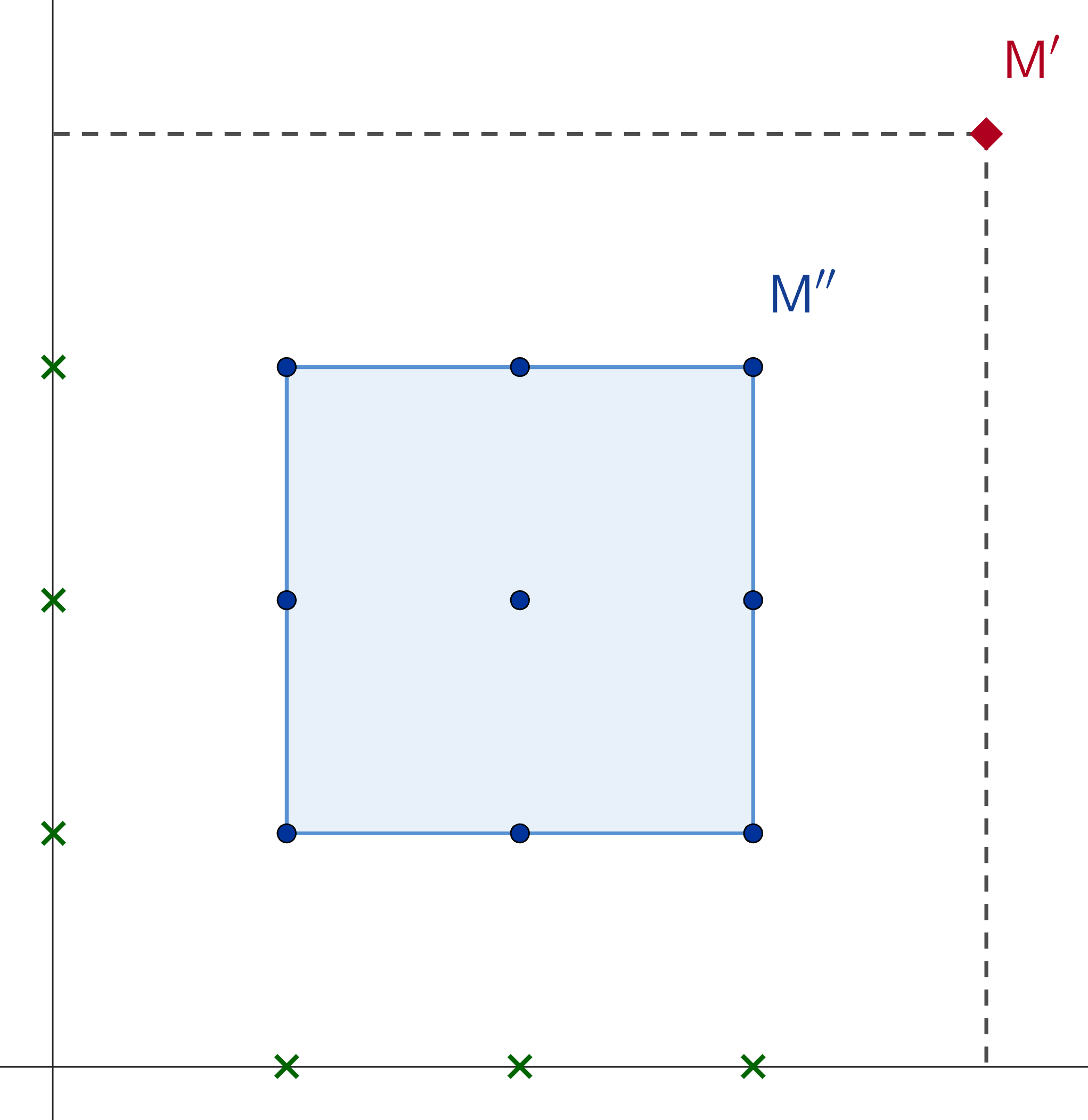}
    \caption{When $N$ is maximal, the truncation of $A$ relative to $S_A^* \cup \{N\}$ has a full block of cyclotomic divisors below an unsupported divisor $\Phi_{M'} (X)$.}
    \label{fig:maximalN}
\end{figure}

\subsection{(T2) lower bounds for two prime factors}
\label{sec-lowerT2-2primes}

In this subsection, we will assume that $M$ has two distinct prime factors $p_1$ and $p_2$. As in Section \ref{sec-4primeexample}, we will abbreviate $p:=p_1$ and $q:=p_2$. We will continue to use the numerical indices where appropriate, so that for example $F^N_1$ will still denote a fiber in the $p$ direction on scale $N$ and $\capexp_1(S)$ will denote the set of exponents of $p=p_1$ in $S$.

Under these assumptions upon $M$, we then have the following general size increase when $A$ satisfies $(T2)$ and admits an unsupported divisor $N$.

\begin{theorem}\label{thm:additionaldivisor}
    Let $M = p^m q^n$ and suppose that $A \in \calm^+ (\mathbb{Z}_M)$ satisfies $(T2)$. If there exists some $N = p^{\gamma} q^{\eta}$ such that $\Phi_N (X) \mid A(X)$ and $\Phi_{p^{\gamma}} (X), \Phi_{q^{\eta}} (X) \nmid A(X)$, then
\begin{equation}\label{eq:additionaldivisorincrease}
A(1) > \prod_{s \in S_A^*} \Phi_s (1).
\end{equation}
\end{theorem}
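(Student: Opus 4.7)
The plan is to reduce the theorem to a direct application of Proposition~\ref{prop-expdown}. Write the prime-power part of $S_A^*$ as $\{p^{\alpha_1}, \ldots, p^{\alpha_r}, q^{\beta_1}, \ldots, q^{\beta_s}\}$, so that $\prod_{s \in S_A^*}\Phi_s(1) = p^r q^s$. Since $A$ satisfies $(T2)$, each composite divisor $\Phi_{p^{\alpha_i} q^{\beta_j}}$ also divides $A(X)$. I would therefore enlarge the collection of scales to
\[
S := S_A^* \cup \{N\} \cup \{p^{\alpha_i}q^{\beta_j} : 1 \leq i \leq r,\ 1 \leq j \leq s\},
\]
so that $\Phi_s \mid A$ for every $s \in S$. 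The hypotheses $\Phi_{p^\gamma}, \Phi_{q^\eta} \nmid A$ together with $\Phi_N \mid A$ force $\gamma, \eta \geq 1$, $\gamma \notin \{\alpha_i\}$, and $\eta \notin \{\beta_j\}$, so that $\capexp_1(S) = \{\alpha_1, \ldots, \alpha_r, \gamma\}$ and $\capexp_2(S) = \{\beta_1, \ldots, \beta_s, \eta\}$.

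The key step, which I expect to be the main obstacle, is verifying condition~(\ref{e-expdown}) for this $S$. The pairs $(\alpha, \beta) \in \capexp_1(S) \times \capexp_2(S)$ for which $p^\alpha q^\beta \in S$ are precisely $\{(\alpha_i, \beta_j)\} \cup \{(\gamma, \eta)\}$; the absent pairs are $(\gamma, \beta_j)$ with $\beta_j \neq \eta$ and $(\alpha_i, \eta)$ with $\alpha_i \neq \gamma$. Running through the cases of (\ref{e-expdown}) with $(\alpha, \beta) \in S$, $(\alpha', \beta') \notin S$, and $\alpha' < \alpha$, $\beta' < \beta$, a direct check shows that at least one of the substituted pairs $p^{\alpha'}q^\beta$ or $p^\alpha q^{\beta'}$ has both coordinates in $\{\alpha_i\} \times \{\beta_j\}$ and thus belongs to the $(T2)$-induced part of $S$. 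Although each subcase is resolved in one line, careful bookkeeping is required to cover all possibilities.

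With (\ref{e-expdown}) verified, Proposition~\ref{prop-expdown} gives $|A| \geq \capfib(S)$. By Proposition~\ref{prop-multifibered}, every $s \in S \setminus \{N\}$ contributes only exponents already present in $\capexp_1(S_A^*) \cup \capexp_2(S_A^*)$ regardless of the assignment, and so only the choice of $\sigma(N) \in \{1, 2\}$ matters: $\sigma(N) = 1$ produces $\capfib(S, \sigma) = p^{r+1}q^s$, while $\sigma(N) = 2$ produces $p^r q^{s+1}$. Taking the minimum gives $\capfib(S) = p^r q^s \min(p, q) \geq 2 p^r q^s$, which is strictly greater than $\prod_{s \in S_A^*}\Phi_s(1) = p^r q^s$ and thereby establishes the theorem.
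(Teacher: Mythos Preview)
Your approach is correct and provides a genuinely different, more streamlined route than the paper's. The paper proves Theorem~\ref{thm:additionaldivisor} by splitting into cases depending on where $(\gamma,\eta)$ falls relative to the extreme prime-power exponents, handled separately by Proposition~\ref{prop:T2topdown}, Proposition~\ref{prop:Niscentral}, and Corollary~\ref{cor:remainingedgecasesN}; the central case (Proposition~\ref{prop:Niscentral}) in particular requires a delicate argument combining truncation, long-fiber decompositions (Proposition~\ref{lma:longfiberslincom}), and weight-tracking across four blocks of divisors. You bypass all of this by observing that the $(T2)$ hypothesis supplies exactly the composite divisors $p^{\alpha_i}q^{\beta_j}$ needed to verify condition~(\ref{e-expdown}), after which Proposition~\ref{prop-expdown} handles everything uniformly.

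One small point of precision in your verification of~(\ref{e-expdown}): you assert that one of $p^{\alpha'}q^\beta$ or $p^\alpha q^{\beta'}$ always lands in the $(T2)$ block $\{\alpha_i\}\times\{\beta_j\}$. That is not quite how every case resolves. When $(\alpha,\beta)=(\gamma,\eta)$, the constraints $\alpha'<\gamma$ and $\beta'<\eta$ force $\alpha'\in\{\alpha_i\}$ and $\beta'\in\{\beta_j\}$, so $p^{\alpha'}q^{\beta'}\in S$ and the hypothesis of~(\ref{e-expdown}) is vacuous; likewise, when $(\alpha,\beta)=(\alpha_i,\beta_j)$ with $(\alpha',\beta')=(\gamma,\eta)$, one has $p^{\alpha'}q^{\beta'}=N\in S$. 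These are cosmetic omissions rather than gaps; the case check is routine and goes through exactly as you indicate.

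The advantage of your route is uniformity and brevity. The paper's approach, while considerably longer, extracts finer structural information along the way (for instance, the block-by-block long-fiber description of $A$), and Proposition~\ref{prop:T2topdown} gives the quantitatively sharper bound $A(1)\geq \min(p,q)\prod_{s\in S_A^*}\Phi_s(1)$ in the maximal-exponent case.
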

Of course, we always have that
$$
A(1) \geq \prod_{s \in S_A^*} \Phi_s (1),
$$
for any multiset $A \in \calm^+ (\mathbb{Z}_M)$. Theorem \ref{thm:additionaldivisor} is an improvement because it shows that this inequality must be strict if $A$ admits an unsupported divisor.

We will prove Theorem \ref{thm:additionaldivisor} in four cases depending upon the location of the unsupported divisor. Proposition \ref{prop:T2topdown} already handles the case where $N$ has maximal $p_1$ and $p_2$ exponents. The remaining three cases are the content of Proposition \ref{prop:Niscentral} and Corollary \ref{cor:remainingedgecasesN}. Of the remaining cases to consider, the situation where both $\gamma$ and $\eta$ are neither maximal nor minimal contains the most new ideas. We present the proof of this result first, often referencing the key ideas which are developed in the proof of later cases.

To this end, let
$$
\alpha_i : = \min \{\alpha \in \mathbb{N} : \Phi_{p_i^{\alpha}} (X) \mid A(X) \}
$$
denote the minimal prime power exponents associated to cyclotomic divisors of $A(X)$. We will also use the notation
$$
\textsf{EXP}^*(i) : = \{\upsilon \in \mathbb{N} : \Phi_{p_i^\upsilon} (X) \mid A(X) \}
$$
to denote the exponents associated to prime power cyclotomic divisors of $A$.

\begin{proposition}\label{prop:Niscentral}
    Let $M = p^m q^n$ and $A \in \calm^+ (\mathbb{Z}_{p^m q^n})$. Suppose that $A$ satisfies $(T2)$ and also has an unsupported divisor $\Phi_N (X) \mid A(X)$ with $N = p^{\gamma} q^{\eta}$ satisfying $\alpha_1 <\gamma < \beta_1$ and $\alpha_2 <\eta < \beta_2$. Then $A$ has the size increase given in \eqref{eq:additionaldivisorincrease}.
\end{proposition}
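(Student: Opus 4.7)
The plan is to argue by contradiction: assume $|A|=\prod_{s\in S_A^*}\Phi_s(1)=p^{E_1-1}q^{E_2-1}$. First, apply Proposition \ref{prop:truncation} with $S=S_A^*\cup\{N\}$ to replace $A$ by $A'\in\calm^+(\mathbb{Z}_{M'})$ at scale $M'=p^{E_1}q^{E_2}$, so that the unsupported divisor becomes $\Phi_{N'}$ with $N'=p^{\ell_1}q^{\ell_2}$ and $1<\ell_1<E_1$, $1<\ell_2<E_2$. In parallel, let $A''\in\calm^+(\mathbb{Z}_{M''})$ be the truncation of $A$ relative to $S_A^*$ alone, with $M''=p^{E_1-1}q^{E_2-1}$. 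Lemma \ref{lma:T2maximaltruncation} combined with $|A|=M''$ forces $A''$ to be the constant weight-$1$ function on $\mathbb{Z}_{M''}$; since $A''$ is the push-forward of $A'$ under the projection $\pi\colon\mathbb{Z}_{M'}\to\mathbb{Z}_{M''}$ that deletes the $\ell_1$-th $p$-digit and the $\ell_2$-th $q$-digit, $A'$ is in fact a \emph{set} containing exactly one element of each $\pi$-fiber.

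Next, I would invoke the long-fiber decomposition of Proposition \ref{lma:longfiberslincom} at the base scale $L_0=p^{\ell_1+1}q^{\ell_2+1}$; the hypothesis holds because every $\Phi_L$ with $L_0\mid L\mid M'$ is supplied by $(T2)$ (neither exponent of such $L$ equals $\ell_1$ or $\ell_2$). This yields
\[
A'(X)=P_1(X)\,F_{1,E_1-\ell_1}(X)+P_2(X)\,F_{2,E_2-\ell_2}(X)\pmod{X^{M'}-1},
\]
with $P_1,P_2$ of non-negative integer coefficients. Evaluating at $X=1$ and reducing modulo $p^{E_1-\ell_1}$ and $q^{E_2-\ell_2}$ separately forces $P_1(1)=\tilde a\,q^{E_2-\ell_2}$, $P_2(1)=\tilde b\,p^{E_1-\ell_1}$, and $\tilde a+\tilde b=p^{\ell_1-1}q^{\ell_2-1}$. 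Applying Proposition \ref{lma:reductionindimension} to $\Phi_{N'}\mid A'$ in each of the $p$- and $q$-directions rules out the chain alternative, which would entail $\Phi_{p^{\ell_1}}\mid A'$ or $\Phi_{q^{\ell_2}}\mid A'$; hence $A'\bmod N'$ contains full regular $p$- and $q$-fibers, and in particular $\tilde a,\tilde b>0$.

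I would then sharpen the divisibility of $P_1$ and $P_2$ by evaluating at primitive $s$-th roots of unity. Direct computation gives $F_{1,E_1-\ell_1}(\zeta_s)=0$ iff $v_p(s)>\ell_1$ and equals $p^{E_1-\ell_1}$ otherwise, and symmetrically for $F_{2,E_2-\ell_2}$. Plugging this into $A'(\zeta_s)=0$ for each $\Phi_s$ supplied by $(T2)$ yields $\Phi_s\mid P_1$ for every $s=p^jq^k$ with $j\leq\ell_1-1$ and $k\geq\ell_2+1$, and symmetrically $\Phi_s\mid P_2$ for $j\geq\ell_1+1$ and $k\leq\ell_2-1$. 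The divisibility $\Phi_{N'}\mid A'$ becomes the coupling
\[
p^{E_1-\ell_1}P_1(X)+q^{E_2-\ell_2}P_2(X)\equiv 0\pmod{\Phi_{N'}(X)},
\]
and the forbidden non-divisibilities $\Phi_{p^{\ell_1}},\Phi_{q^{\ell_2}}\nmid A'$ translate to the non-vanishing of this same combination modulo $\Phi_{p^{\ell_1}}$ and $\Phi_{q^{\ell_2}}$, respectively.

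The main obstacle is to combine these relations with the $0/1$ rigidity of $A'$ to rule out every consistent configuration, and I would proceed by a dichotomy. In the limiting case $\tilde b=0$ (or symmetrically $\tilde a=0$), the full burden falls on $P_1$, which inherits $(T1)$, $(T2)$, and the unsupported divisor $\Phi_{N'}$ relative to the smaller ambient $\mathbb{Z}_{p^{\ell_1}q^{E_2}}$; in this reduced picture $\ell_1$ sits exactly one above the new maximal $p$-exponent, placing the problem in the edge-case regime (to be handled by the machinery developed alongside Corollary \ref{cor:remainingedgecasesN}, or by an inductive invocation of the same theorem at smaller complexity), from which a strict lower bound contradicts $|P_1|=p^{\ell_1-1}q^{E_2-1}$. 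In the generic case $\tilde a,\tilde b>0$, the coupling modulo $\Phi_{N'}$, the opposite-quadrant cyclotomic divisors of $P_1$ and $P_2$, and the disjointness of the supports of $P_1*F_{1,E_1-\ell_1}$ and $P_2*F_{2,E_2-\ell_2}$ in $\mathbb{Z}_{M'}$ together produce a cuboid-based parity/cardinality incompatibility (tested on scales where $\Phi_{p^{\ell_1}}$ or $\Phi_{q^{\ell_2}}$ fail), yielding the final contradiction.
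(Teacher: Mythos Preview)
Your setup matches the paper's: truncation via Proposition~\ref{prop:truncation} so that the exponent sets become $\{1,\dots,\ell_1-1,\ell_1+1,\dots,E_1\}$ and $\{1,\dots,\ell_2-1,\ell_2+1,\dots,E_2\}$, a contradiction hypothesis $|A'|=p^{E_1-1}q^{E_2-1}$, the observation that $A''$ is uniform on $\ZZ_{M''}$, and the long-fiber decomposition from Proposition~\ref{lma:longfiberslincom} applied to the top-right block $\mathcal{B}_4=\{s:\ p^{\ell_1+1}q^{\ell_2+1}\mid s\mid M'\}$. Up to this point you are essentially reproducing the paper's argument.

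The genuine gap is after that. Your final two paragraphs do not deliver a contradiction: the phrase ``cuboid-based parity/cardinality incompatibility'' is a placeholder, not an argument, and the appeal to Corollary~\ref{cor:remainingedgecasesN} in the ``limiting case'' is circular (that corollary is proved \emph{by} the method of this proposition, with blocks omitted). There is also an internal inconsistency: you argue that $\tilde a,\tilde b>0$ via Proposition~\ref{lma:reductionindimension} and then immediately open a case $\tilde b=0$. Moreover, the inference ``$A'\bmod N'$ contains a $p$-fiber, hence $\tilde a>0$'' does not follow: a $p$-fiber on the coarse scale $N'$ says nothing direct about whether $P_1\not\equiv0$ in the decomposition on scale $M'$.

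What the paper actually does at this stage is a concrete geometric chase across four scales, not a root-of-unity computation. Assuming (without loss) $P_1\not\equiv0$, it fixes one long $p^{E_1-\ell_1}$-fiber in $A'\bmod M'$ and tracks it downward: (i)~mod $M_2=p^{\ell_1-1}q^{E_2}$ it collapses to a point $y$ with $w_{A'}^{M_2}(y)\ge p^{E_1-\ell_1}$; (ii)~a second application of Proposition~\ref{lma:longfiberslincom} to the block $\mathcal{B}_2$, combined with the equidistribution forced by $\Phi_{q^{\ell_2+1}}\cdots\Phi_{q^{E_2}}\mid A'$ and the fact that the $p$-long-fibers on this scale are \emph{maximal} in the $p$-direction, shows one may take $Q_1\equiv0$, so $y$ sits on a $q^{E_2-\ell_2}$-fiber $F$ with constant weight $w\ge p^{E_1-\ell_1}$; (iii)~mod $M_1=p^{\ell_1-1}q^{\ell_2-1}$ the uniformity of $A''$ pins $w=p^{E_1-\ell_1}$ exactly; (iv)~finally, mod $N'$ the point carrying the collapsed $p$-long-fiber has weight $\ge p^{E_1-\ell_1}$ and, by Lemma~\ref{structure-thm}, lies on either a $q$-fiber (which, pushed to $N'/p$, forces weight $p^{E_1-\ell_1}q^{E_2-\ell_2+1}$ at a point of $\ZZ_{M_1}$, contradicting (iii)) or a $p$-fiber (which forces weight $p^{E_1-\ell_1+1}q^{E_2-\ell_2}$ at $z\in\ZZ_{N'/p}$, contradicting the exact value from (ii)). This four-step descent, and in particular the equality $w=p^{E_1-\ell_1}$ squeezed out in step~(iii), is the missing idea in your proposal.
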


\begin{proof}
Using Proposition \ref{prop:truncation}, we assume that
$$
\textsf{EXP}^*(1) : = \{1,\ldots,\gamma - 1, \gamma + 1,\ldots,m\}, \quad \textsf{EXP}^*(2) : = \{1,\ldots,\eta - 1, \eta + 1,\ldots,n\}.
$$
In this case, inequality \eqref{eq:additionaldivisorincrease} becomes $A(1) > p^{m-1} q^{n-1}$. We assume, in contradiction, that $A(1) = p^{m-1} q^{n-1}$. This configuration of cyclotomic divisors is illustrated in Figure \ref{fig:megablocks}. Let
\begin{align*}
& M_1 = p^{\gamma - 1} q^{\eta - 1},& \, & M_2 = p^{\gamma -1 } q^{n},& \, &M_3 = p^{m} q^{\eta -1},& \, &M_4 = p^m q^n& \\[1ex]
& N_1  = p q,& \, &N_2 = p q^{\eta +1 },& \, &N_3 = p^{\gamma + 1} q^{n},& \, &N_4 = p^{\gamma + 1} q^{\eta + 1}&
\end{align*}
so that each $M_l$ (resp. $N_l$) denotes the upper-right (resp., lower left) vertex of the block $\mathcal{B}_l$ which are shown in Figure \ref{fig:megablocks}. Notice that, as $A$ satisfies $(T2)$ (and has been uniformized), our blocks are the clusters
$
\mathcal{B}_l : = \{s \in \mathcal{D}(M) : N_i \mid s \mid M_i \}.
$
We carefully demarcate when we work in each of the clusters $\mathcal{B}_1,\ldots,\mathcal{B}_4$, as the corresponding steps are repeated in proofs of later results.

\noindent
\underline{\it Block $\mathcal{B}_4$}: Applying Proposition \ref{lma:longfiberslincom} to the block $\mathcal{B}_4$ produces polynomials $P_1(X),P_2(X) \in \mathbb{Z}[M]$ with non-negative coefficients such that
\begin{equation}\label{eq:M4longfibers}
A (X) = P_1 (X) F_{1,m-\gamma}^{M_4} (X) + P_2 (X) F_{2,n-\eta}^{M_4} (X) \mod X^{M_4}- 1.
\end{equation}
We may assume that $P_1 \not\equiv 0$, and so we will work with the block $\mathcal{B}_2$ in the next step (this is where we rely on the fact that both $\mathcal{B}_2$ and $\mathcal{B}_3$ are contained in $S_A$).

\begin{figure}[h!]
    \centering
    \begin{subfigure}[t]{0.45\textwidth}
        \centering
        \includegraphics[width=.7\textwidth]{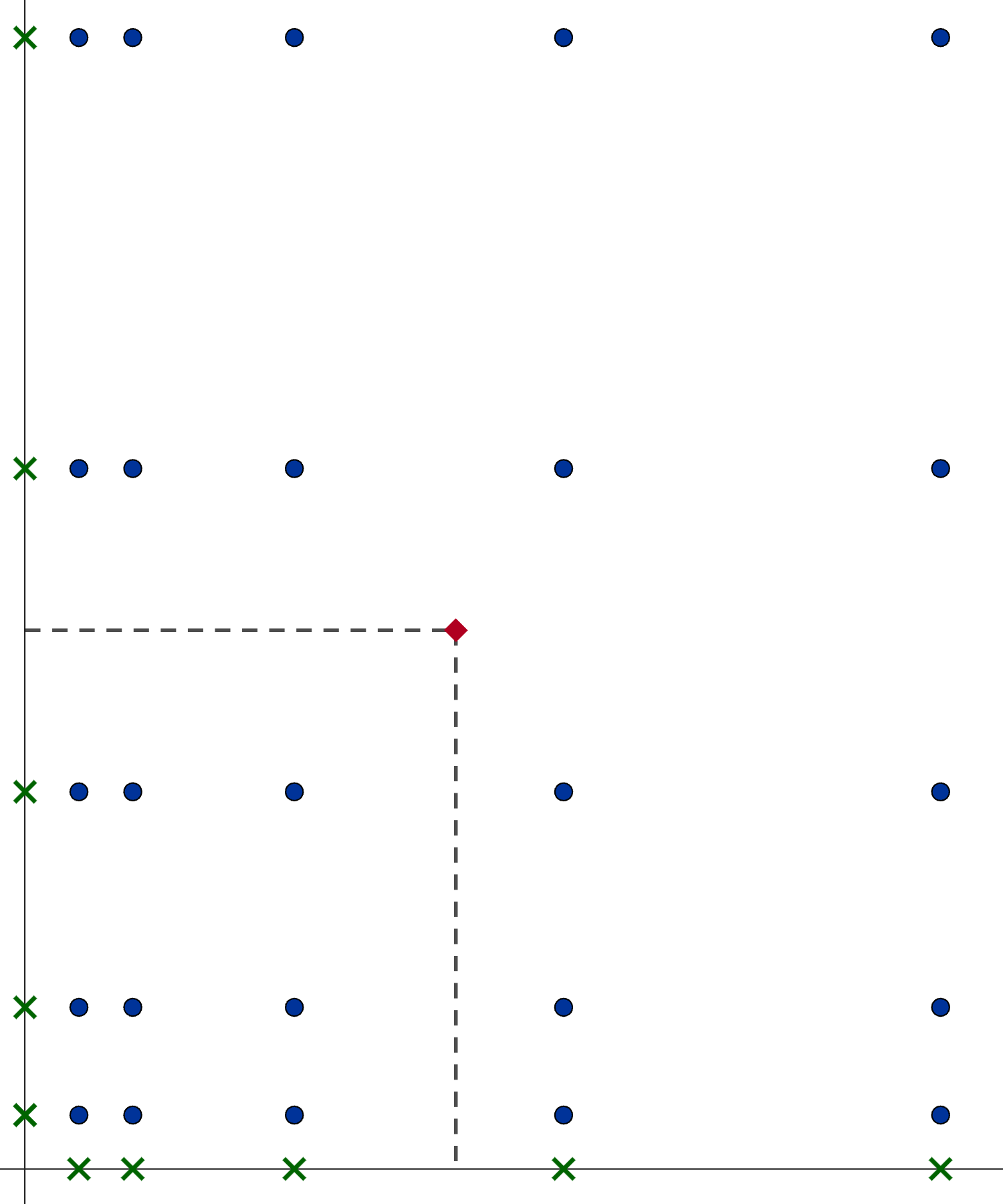}
        \caption{Before truncation}
    \end{subfigure}%
    \begin{subfigure}[t]{0.45\textwidth}
        \centering
        \includegraphics[width=.88\textwidth]{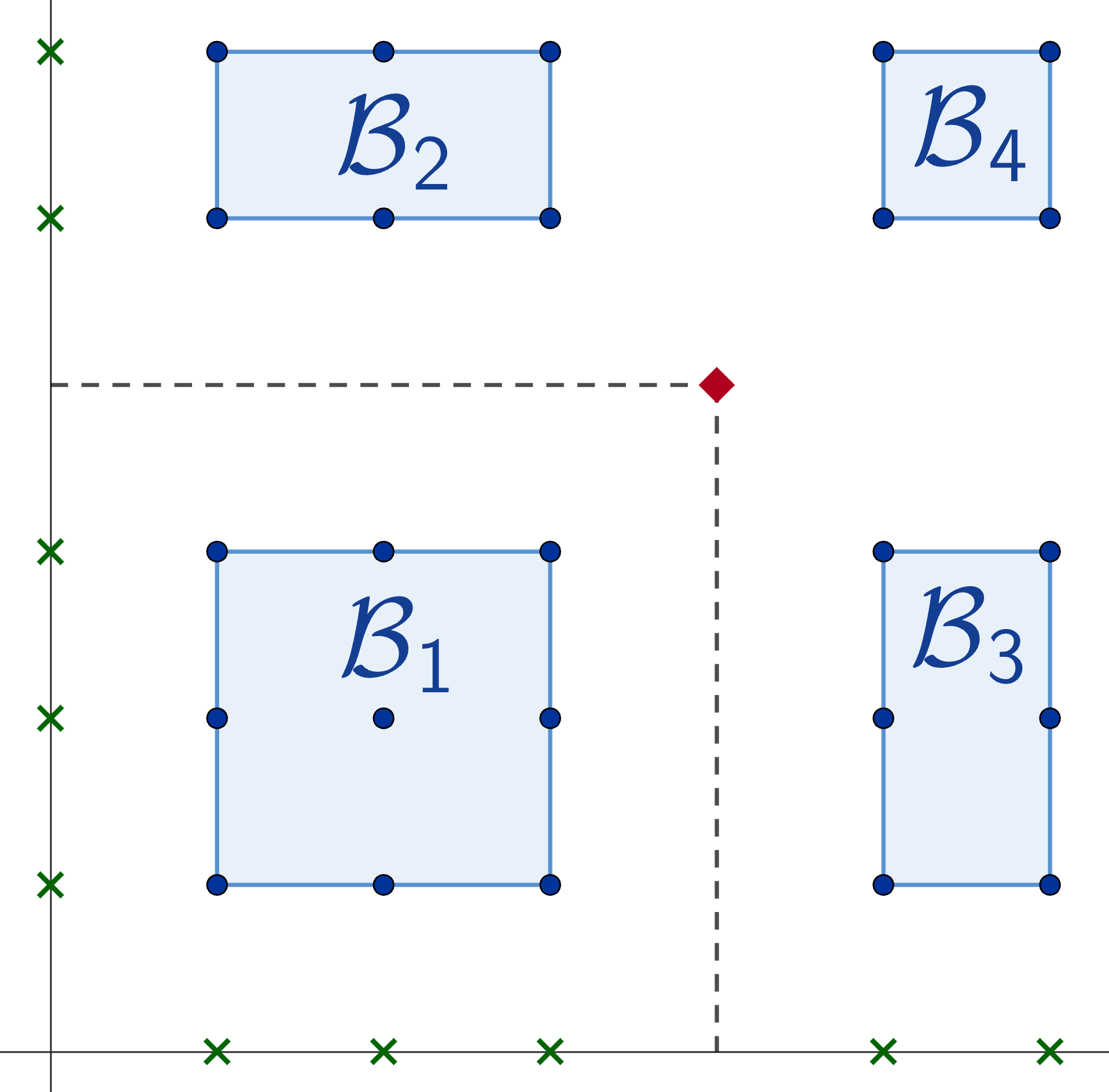}
        \caption{After truncation}
	 \end{subfigure}
    \caption{Green cross points are prime power divisors, blue circle points are $(T2)$ divisors, and the red diamond point is the unsupported divisor. After truncation, we have four complete blocks of divisors.}
     \label{fig:megablocks}
\end{figure}

\noindent
\underline{\it Block $\mathcal{B}_2$ or $\mathcal{B}_3$}: Our next goal is to show that $A \textrm{ mod } M_2$ can be expressed as a linear combination of long fibers in the $q$ direction alone. Applying Proposition \ref{lma:longfiberslincom} to the block $\mathcal{B}_2$ produces polynomials $Q_1 (X),Q_2(X) \in \mathbb{Z}[X]$ with non-negative coefficients such that
\begin{equation}\label{eq:M2longfibers}
A(X) = Q_1 (X) F_{1,\gamma -1}^{M_2}(X) + Q_2 (X) F_{2,n - \eta}^{M_2} (X) \mod X^{M_2}-1.
\end{equation}
We want to show that we can take $Q_1 \equiv 0$ in this long fiber decomposition.

We now make use of the fact that $\Phi_{q^{\eta +1}} (X),\ldots ,\Phi_{q^n} (X) \mid A(X)$. All $q^{n - \eta}$-fibers at scale $M_2$ are necessarily constant mod $q^{s}$ for every $s \in \{ \eta +1,\ldots ,n\}$. Hence, the number of $p^{\gamma - 1}$-fibers at scale $M_2$ must be equidistributed along these same $q^s$ cosets. However, the length of each such $p^{\gamma-1}$-fiber at scale $M_2$ is maximal in $\mathbb{Z}_{M_2}$, which follows from the fact that $p^{\gamma - 1} \mid \mid M_2$. Hence, equidistribution in number is equivalent to being a complete grid (at least, in this special case where our long fibers have maximal length). This lets us then assume that $Q_1 \equiv 0$ in \eqref{eq:M2longfibers}.

Recall that we are assuming that $A \textrm{ mod } M_4$ has mask polynomial \eqref{eq:M4longfibers} with $P_1 \not\equiv 0$. This means that $A \textrm{ mod } M_4$ has (at least) one $p^{m - \gamma}$ long fiber in the $p$ direction with a positive weight.  This long fiber collapses to a point $y \in \mathbb{Z}_{M_2}$ with multiplicity at least $p^{m - \gamma}$ (which just means that $w_A^{M_2} (y) \geq p^{m - \gamma}$). Since we have assumed that $A \textrm{ mod } M_2$ is a linear combination of $q^{n-\eta}$-fibers, there exists a set $F \in \calm^+ (\mathbb{Z}_{M_4})$ such that
$$
F(X) = X^y F_{2, n - \eta}^{M_2}(X) \mod X^{M_2} - 1,
$$
and such that
\begin{equation}\label{eq:M2fiberweight}
(A\cap F)(X):= w \, F_{2,n - \eta}^{M_2} (X) \mod X^{M_2} - 1 \Rightarrow (A \cap F)(1) =w\,q^{n - \eta},
\end{equation}
where the weight $w \in \mathbb{N}$ is constant and satisfies $w \geq p^{m - \gamma}$. We remind the reader that the multiset $A \cap F \in \calm^+ (\mathbb{Z}_{M})$ is defined via the equality of weights
$$
w_{A \cap F}^M (x) = w_A^M (x) w_F^M (x), \quad  \forall x \in \mathbb{Z}_M.
$$

\noindent
\underline{\it Block $\mathcal{B}_1$}: The multiset $A \cap F$ collapses to a single point $x \in \mathbb{Z}_{M_1}$ which satisfies $w_A^{M_1} (x)\geq p^{m - \gamma} q^{n - \eta}$. Observe that, by Lemma \ref{lma:T2maximaltruncation} and the assumption that $A(1) = p^{m-1} q^{n - 1}$, one has
\begin{equation}\label{eq:balancedweightM1}
A(X) = p^{m - \gamma} q^{n - \eta} \big(1 + X + \cdots + X^{M_1} \big) \mod X^{M_1} - 1.
\end{equation}
Hence, we must have that
\begin{equation}\label{eq:weightequality}
(A \cap F) (1) = w_A^{M_1} (x) = w \, q^{n - \eta} = p^{m - \gamma} q^{n - \eta} \Rightarrow w = p^{m - \gamma}.
\end{equation}
Consequently, we know that
\begin{equation}\label{eq:fiberwithconstantweight}
(A \cap F)(X) = p^{m - \gamma} F_{2, n - \eta}^{M_2} (X) \mod X^{M_2} - 1.
\end{equation}
We obtain that $A\cap F \mod X^{M_4} - 1$ is a union of $p^{m-\gamma}$-fibers.

\noindent
\underline{\it The unsupported divisor $N$}: In fact, we now want to work with the set $A \cap F \textrm{ mod } N$, in addition to considering the whole set $A \textrm{ mod } N$. This which amounts to only examining the part of $A$ which intersected the long fiber as in \eqref{eq:fiberwithconstantweight}. Hence, let $B \in \calm^+ (\mathbb{Z}_N)$ satisfy $B \equiv (A \cap F) \textrm{ mod } N$.

Since $\Phi_N (X) \mid A(X)$, there exist polynomials $R_1 (X), R_2 (X) \in \mathbb{Z}[X]$ with non-negative coefficients such that
\begin{equation}\label{eq:unsupportedforB}
A(X) = R_1 (X) F_1^N (X) + R_2 (X) F_2^N (X) \mod X^N - 1.
\end{equation}
We also remark that \eqref{eq:fiberwithconstantweight} implies that
\begin{equation}\label{eq:heavypointforB}
B(X) = p^{m - \gamma} q^{n - \eta} X^z \mod X^{N/p} - 1
\end{equation}
for some $z \in \mathbb{Z}_{N/p}$.

We now rely upon the fact that $P_1 \not\equiv 0$ in \eqref{eq:M4longfibers}. Any $p^{m-\gamma}$-fiber at scale $M_4$ collapses to a single point in $\mathbb{Z}_N$ with multiplicity $p^{m - \gamma}$.
As we have seen earlier, the preimage of multiset $B \textrm{ mod } N $ taken $\textrm{ mod } M_4$ is actually a union of $p^{m-\gamma}$-fibers. Hence, $p^{m-\gamma}\mid w_B^{N}(z')$ for every $z'\in \mathbb{Z}_N$.
We distinguish two cases.

In case where $(z' * F_2^N) \subset A \textrm{ mod } N$ (i.e., $z'$ belongs to a $q$-fiber at scale $N$), the congruence for $A(X)$ implies

$$ w_A^{M_1}(z')= w_A^{N/p}(z')=w_B^{N/p}(z')=p^{m-\gamma}q^{n-\eta},$$
which further implies that
$$w_A^{N/p}(z'+\frac{jN}{pq})=0 \textrm{ and } w_A^{N}(z'+\frac{jN}{q})=0, \ \ \textrm{ for }j\not\equiv 0 \textrm{ mod } q.$$
Therefore, $w_A^{M_1}(z')\ge p^{m-\gamma}q^{n-\eta+1}$, contradiction.

Otherwise, if $z'\in\mathbb{Z}_N$ belongs to a single  $p$-fiber at scale $N$, then $w_B^{N/p}(z')=pw_B^N(z')$, which is divisible by $p^{m-\gamma+1}$, contradiction.
\end{proof}

\begin{remark}\label{rmk:lessoverhead}
\rm{
Notice that the conditions $\alpha_i <\gamma_i < \beta_i$ for $i \in \{1,2\}$ guarantee that all of the blocks $\mathcal{B}_1,\mathcal{B}_2, \mathcal{B}_3,\mathcal{B}_4$ which were utilized in the proof of Proposition \ref{prop:Niscentral} are non-empty. However, if $\gamma_i \not\in [\alpha_i, \beta_i]$ for some $i \in \{1,2\}$, then a corresponding combination of blocks will be empty. One such example is illustrated in Figure \ref{fig:onemaximalexponentN} below. Even in these cases, however, the proof of Proposition \ref{prop:Niscentral} applies---with the caveat that one must omit any steps which correspond to empty blocks. This gives the following Corollary.
}
\end{remark}

\begin{corollary}\label{cor:remainingedgecasesN}
Let $M = p^m q^n$ and $A \in \calm^+ (\mathbb{Z}_{p^m q^n})$. Suppose that $A$ satisfies $(T2)$ and also has an unsupported divisor $\Phi_N (X) \mid A(X)$ with $N = p^{\gamma_1} q^{\gamma_2}$ with $\gamma_i \not\in [\alpha_i, \beta_i]$ for some $i \in \{1,2\}$. Then, $A$ has the size increase \eqref{eq:additionaldivisorincrease}.
\end{corollary}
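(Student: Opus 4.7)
The plan is to run the proof of Proposition~\ref{prop:Niscentral} essentially verbatim, deleting any steps that refer to empty blocks of divisors. As noted in Remark~\ref{rmk:lessoverhead}, when $\gamma_i\notin[\alpha_i,\beta_i]$ for some $i$, at least one of the four blocks $\mathcal{B}_1,\mathcal{B}_2,\mathcal{B}_3,\mathcal{B}_4$ used in that proof becomes vacuous, and the surviving blocks still carry enough structure to close the argument.

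Concretely, I would first apply Proposition~\ref{prop:truncation} to $A$ relative to $S_A^*\cup\{N\}$, producing a multiset $A'\in\calm^+(\ZZ_{M'})$ with $A'(1)=A(1)$ in which the image $N'$ of $N$ has $p$-exponent equal either to the minimum $1$ or to the maximum of $\capexp_1(S_A^*\cup\{N\})$, or the analogous statement for~$q$. Arguing by contradiction, I would assume $A(1)=\prod_{s\in S_A^*}\Phi_s(1)$ and aim to reproduce the weight identity \eqref{eq:weightequality} together with one of the two contradictions at the end of the proof of Proposition~\ref{prop:Niscentral}.

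The four steps of that proof then translate as follows in the edge case. The long-fiber decomposition in block $\mathcal{B}_4$ via Proposition~\ref{lma:longfiberslincom} is always available (after possibly interchanging the two primes to ensure the long-fiber component in one chosen direction is nontrivial). The reduction in block $\mathcal{B}_2$, which used the prime-power divisors $\Phi_{q^{\eta+1}},\dots,\Phi_{q^n}$ to rule out $p$-direction long fibers, is either carried out as written when $\mathcal{B}_2$ survives, or skipped when $\mathcal{B}_2$ is empty, in which case no such fibers could have appeared in the first place. The propagation to block $\mathcal{B}_1$ and the identification of the heavy point of weight at least $p^{m-\gamma_1}q^{n-\gamma_2}$ require only the uniform-grid conclusion supplied by Lemma~\ref{lma:T2maximaltruncation} applied to the truncation of $A$ relative to $S_A^*$ alone, which is insensitive to the location of $\gamma_1,\gamma_2$. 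Finally, the case analysis of whether the heavy point belongs to an $N$-fiber in the $p$- or $q$-direction produces the same contradictions as \eqref{eq:heavypointforB} and \eqref{eq:weightequality}, since those identities only involve weights on scale $N$ and on scale $M_1$.

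The main obstacle is bookkeeping: one must go through the edge configurations case by case (the cases where $\gamma_1<\alpha_1$, $\gamma_1>\beta_1$, $\gamma_2<\alpha_2$, $\gamma_2>\beta_2$, and their compatible combinations, excluding the all-above case already treated by Proposition~\ref{prop:T2topdown}) and verify in each that the absent block corresponds to a constraint whose omission does not invalidate the chain of weight equalities. Since in every such configuration the surviving blocks include $\mathcal{B}_1$ (carrying the uniform-grid conclusion), at least one of $\mathcal{B}_2,\mathcal{B}_3$ (carrying the requisite prime-power divisors in one direction), and $\mathcal{B}_4$ (supporting the initial long-fiber decomposition), the four-step argument of Proposition~\ref{prop:Niscentral} goes through unchanged up to the renaming of primes, and the same final contradiction is reached.
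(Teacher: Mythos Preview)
Your proposal is correct and follows exactly the approach indicated in the paper: the paper's own proof is simply ``Left to the interested reader (see Remark~\ref{rmk:lessoverhead}),'' and you have spelled out precisely the omission-of-empty-blocks reduction that the remark describes. Your sketch is in fact more detailed than what the paper provides.
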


\begin{proof}
    Left to the interested reader (see Remark \ref{rmk:lessoverhead}).
\end{proof}

\begin{figure}[h!]
    \centering
\includegraphics[width=0.3\linewidth]{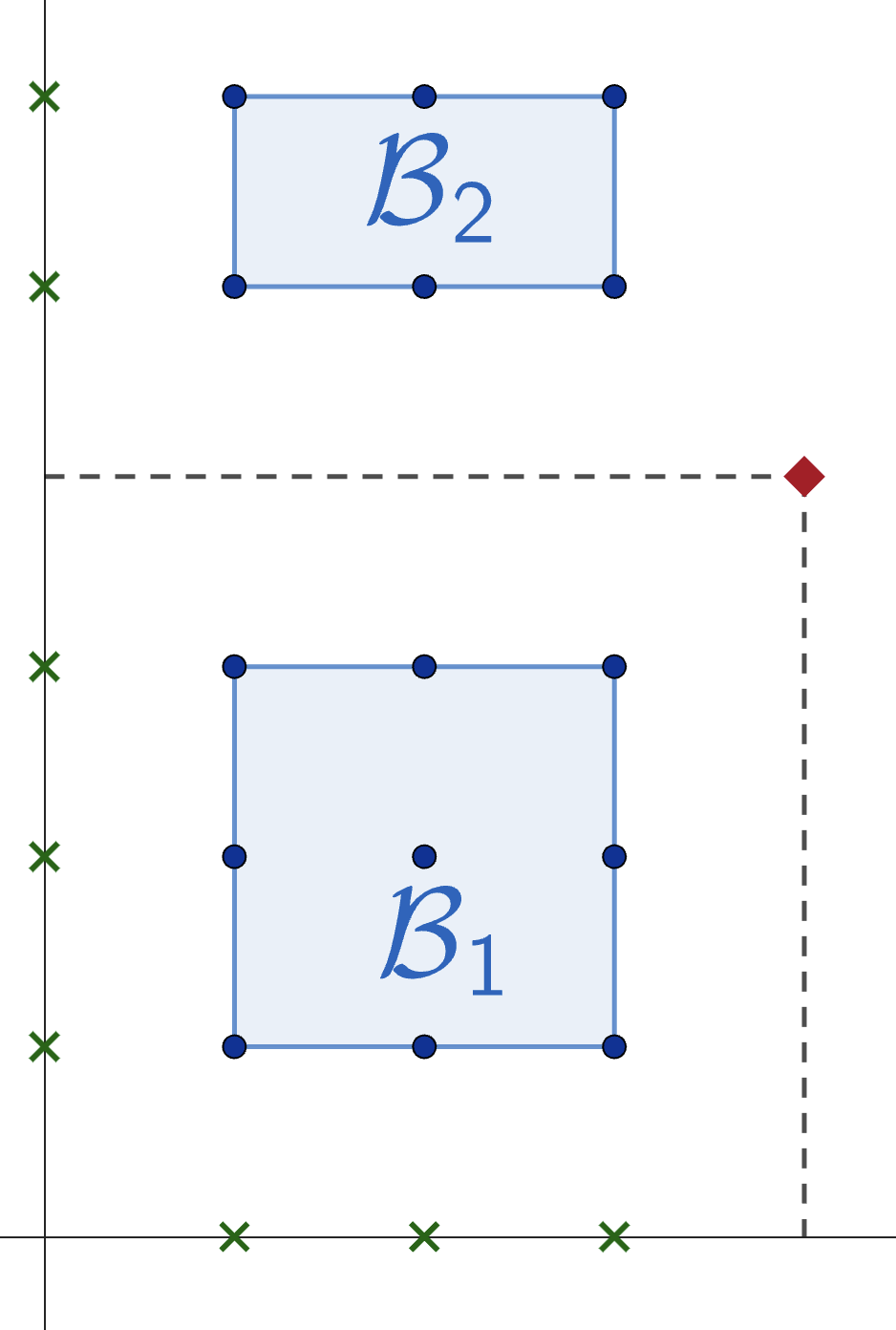}
    \caption{An example of a configuration of cyclotomic divisors where $N = p^{\gamma_1} q^{\gamma_2}$ satisfies $\gamma_1 \not\in [\alpha_1, \beta_1]$. Notice that, in this scenario, the blocks $\mathcal{B}_3$ and $\mathcal{B}_4$ are necessarily empty. As before: green cross points are prime power divisors, blue circle points are $(T2)$ divisors and the red diamond point is an unsupported divisor.}
    \label{fig:onemaximalexponentN}
\end{figure}

Notice that Proposition \ref{thm:additionaldivisor} furnishes a negative answer to Question 2 from Section \ref{sec-intro} in the following special case.

\begin{corollary}\label{cor:unsupported}
    Suppose that $A\subset\nno$ satisfies (T1) and (T2), and that $\textrm{lcm} (S_A) = p^m q^n$ for two distinct prime factors $p,q$. Then $A_0$ does not have any unsupported divisors.
\end{corollary}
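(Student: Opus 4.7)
The plan is to derive this Corollary as an immediate consequence of Theorem \ref{thm:additionaldivisor} combined with the identity (T1). First I would argue by contradiction: suppose that $A\subset\nno$ satisfies (T1), (T2), that $\lcm(S_A)=M=p^mq^n$, and that $A(X)$ admits an unsupported cyclotomic divisor $\Phi_N$ in the sense of Definition \ref{def-unsupported}. Passing to multisets in $\ZZ_M$ in the standard way described around (\ref{eq-minS}), I may treat $A\in\calm^+(\ZZ_M)$ without changing $|A|$ or the set of cyclotomic factors of $A(X)$ coming from scales $s\mid M$.

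Next I would unpack what ``unsupported'' forces when $M$ has only two prime factors. Write $N=p^\gamma q^\eta$ with $0\le\gamma\le m$ and $0\le\eta\le n$. If either of the exponents is zero, then $N$ itself is a prime power, and condition (ii) of Definition \ref{def-unsupported} would read $\Phi_N\nmid A$, contradicting the hypothesis $\Phi_N\mid A$. Hence $\gamma,\eta\ge 1$, and (ii) then gives both $\Phi_{p^\gamma}\nmid A$ and $\Phi_{q^\eta}\nmid A$. These are exactly the hypotheses of Theorem \ref{thm:additionaldivisor}.

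Finally, I would invoke Theorem \ref{thm:additionaldivisor} to obtain the strict inequality
\[
A(1)\;>\;\prod_{s\in S_A^*}\Phi_s(1).
\]
But (T1) is precisely the identity $|A|=A(1)=\prod_{s\in S_A^*}\Phi_s(1)$, and these two statements are incompatible. This contradiction completes the proof of the Corollary.

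There is essentially no obstacle in this reduction, since the hard work has been absorbed into Theorem \ref{thm:additionaldivisor} (handled separately by Proposition \ref{prop:T2topdown} for the ``maximal-exponent'' case, Proposition \ref{prop:Niscentral} for the ``central'' case, and Corollary \ref{cor:remainingedgecasesN} for the remaining edge cases). The only mild subtlety worth noting is the observation in the previous paragraph that, when $M$ has exactly two distinct prime factors, an unsupported divisor in the sense of Definition \ref{def-unsupported} must automatically be of the form $N=p^\gamma q^\eta$ with both $\gamma,\eta\ge 1$, which is exactly what lets us feed $N$ into Theorem \ref{thm:additionaldivisor}.
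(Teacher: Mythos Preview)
Your proposal is correct and matches the paper's intended approach: the paper presents this corollary without proof, noting only that Theorem \ref{thm:additionaldivisor} ``furnishes a negative answer to Question 2'' in this special case, and your argument spells out exactly that reduction. The observation that an unsupported divisor of a set with $\lcm(S_A)=p^mq^n$ must have both exponents positive (so that Theorem \ref{thm:additionaldivisor} applies) is the only thing to check, and you handle it correctly.
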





\section{An example with 4 prime factors}
\label{sec-4primeexample}


\subsection{An example}
\label{subsec-4primes-example}

In this section, we prove Theorem \ref{thm-T2-smallset} and give an affirmative answer to Question \ref{Q2} from Section \ref{CM-subsection}. Theorem \ref{thm-T2-4primes} describes our example in more detail. While the set $A$ below is constructed in a cyclic group $\ZZ_M$, we can clearly map it to a subset of $\nno$.

\begin{theorem}\label{thm-T2-4primes}
Let $M=(p_1p_2p_3p_4)^4$, where
\begin{equation}\label{4p-e1}
p_1>40\hbox{ and }p_i<p_{i+1}<2p_i\hbox{ for }i=1,2,3.
\end{equation}
(The existence of primes satisfying (\ref{4p-e1}) is guaranteed by Bertrand's Postulate.)
Then there exists a set $A\subset \ZZ_M$ such that:
\begin{itemize}
\item[(i)] the prime power cyclotomic divisors of $A(X)$ are $\Phi_{p_i^\alpha}(X)$ for all $i=1,2,3,4$ and $\alpha=2,3,4$,
\item[(ii)] $A$ satisfies both (T1) and (T2), so that in particular we have $|A|=(p_1p_2p_3p_4)^3$,
\item[(iii)] additionally, $A(X)$ has the unsupported cyclotomic divisor $\Phi_{p_1p_2p_3p_4}(X)$.
\end{itemize}
\end{theorem}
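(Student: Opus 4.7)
My plan adapts the lifting strategy of Theorems \ref{prop: countex2} and \ref{thm:general example} to the four-prime setting. In the first step, I would construct a base multiset $B \in \calm^+(\ZZ_N)$ at the intermediate scale $N = (p_1 p_2 p_3 p_4)^2$ satisfying the simultaneous divisibilities $\Phi_{p_i^2} \mid B$ for each $i$, $\Phi_{\prod_{i \in I} p_i^2} \mid B$ for every nonempty $I \subset \{1,2,3,4\}$, and $\Phi_{p_1 p_2 p_3 p_4} \mid B$, while $\Phi_{p_i} \nmid B$ for each $i$. The construction of $B$ would generalize the matrix approach of Theorem \ref{thm:general example} to four dimensions: start from a near-uniform weighting on the four-index array $\ZZ_{p_1^2} \times \ZZ_{p_2^2} \times \ZZ_{p_3^2} \times \ZZ_{p_4^2}$ whose hyperplane sums guarantee all the composite divisibilities $\Phi_{\prod p_i^{\alpha_i}}$ with $\alpha_i \in \{0,2\}$, then perform successive cuboid modifications as in (\ref{add a cuboid}) restricted to $(p_1 p_2 p_3 p_4)$-sub-cuboids so as to install the unsupported divisor $\Phi_{p_1 p_2 p_3 p_4}$ without introducing negative weights.

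In the second step, I would lift $B$ to a set $A \subset \ZZ_M$ by placing long fibers in each prime direction inside the preimages of the natural projection $\pi \colon \ZZ_M \to \ZZ_N$. For each $x \in \supp(B)$, I decompose $w_B(x)$ as a nonnegative integer combination of suitable fiber sizes (using a four-prime analogue of Lemma \ref{lem:pq is the sum of p and q}) and place the corresponding long fibers inside $\pi^{-1}(x)$, choosing distinct $D(M)$-cosets for the base points so that $A$ remains a genuine set. The ratio $D(M)/N = p_1 p_2 p_3 p_4$ (analogous to condition (iv) of Theorem \ref{thm:general example}) provides enough room for this placement to be disjoint, provided the weights in $B$ are bounded by a constant depending only on the primes; this is where the assumptions $p_1 > 40$ and $p_{i+1} < 2 p_i$ enter, guaranteeing both that the modifications in the first step stay nonnegative and that the decomposition in the second step is possible for every weight of $B$. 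By Lemma \ref{lma:longfibersdivisors}, these long fibers supply precisely the remaining prime-power divisors $\Phi_{p_i^3}$ and $\Phi_{p_i^4}$, together with the composite divisors required by (T2) at higher exponents.

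Verification of (i)--(iii) would rest on the key projection identity $A \bmod p_1 p_2 p_3 p_4 = B \bmod p_1 p_2 p_3 p_4$, which holds because every long fiber in any direction projects to a single point at this minimal scale. This identity yields both $\Phi_{p_1 p_2 p_3 p_4} \mid A$ (giving (iii)) and $\Phi_{p_i} \nmid A$ for each $i$ (the non-divisibility half of (i)); the remaining prime-power divisors in (i) and all composite divisors required by (T2) follow by combining the divisibilities inherited from $B$ at low exponents with those supplied by the long fibers at higher exponents, via Proposition \ref{cuboid} and Proposition \ref{lma:longfiberslincom}. The cardinality count $|A| = \sum_x w_B(x) = (p_1 p_2 p_3 p_4)^3$ then gives (T1). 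The main obstacle is the first step combined with the base-point choices in the second: constructing $B$ with all the required divisibilities and nonnegative weights of controlled size, while simultaneously choosing the lifts $\tilde{x} \in \pi^{-1}(x)$ so that the cross-direction fiber contributions (where a direction-$k$ fiber projects trivially modulo an exponent it cannot supply on its own) still satisfy the cuboid conditions at every intermediate (T2) scale, is a delicate combinatorial design problem, and the quantitative assumptions $p_1 > 40$ and $p_{i+1} < 2p_i$ are exactly what provide the slack required for this design to succeed.
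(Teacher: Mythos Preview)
Your lifting approach has a genuine gap at the heart of the (T2) verification, and it is already visible at the prime-power level. If $A$ is assembled as $A_1\cup A_2\cup A_3\cup A_4$ with each $A_i$ a union of long fibers $F_{i,2}$ in direction $p_i$, then Lemma~\ref{lma:longfibersdivisors} gives $\Phi_{p_1^3}\mid A_1$, but says nothing about $\Phi_{p_1^3}\mid A_j$ for $j\neq 1$: a long $p_2$-fiber collapses to a single point modulo $p_1^3$, so whether $\Phi_{p_1^3}$ divides $A_2+A_3+A_4$ depends entirely on how the base points of those fibers are distributed modulo $p_1^3$. Nothing in your construction controls this distribution. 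The same obstruction recurs for every (T2) divisor $\Phi_s$ with $s=\prod_{i\in I}p_i^{\alpha_i}$ having at least one $\alpha_i\geq 3$; there are $255-15=240$ such constraints, and each imposes a nontrivial equidistribution condition on the base points of fibers in the ``wrong'' directions. You acknowledge this as the ``main obstacle'' and then assert that the hypotheses $p_1>40$ and $p_{i+1}<2p_i$ provide the necessary slack, but these are size conditions, not equidistribution conditions: extra room in $D(M)/N$ lets you keep the fibers disjoint, but does nothing to align their base points across $240$ simultaneous congruence conditions.

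The paper sidesteps this entirely by working in the opposite direction. It starts from the standard tiling set $U=N\ZZ_M$ (with $N=p_1p_2p_3p_4$), which already satisfies (T1) and (T2), decomposes $U$ as $U_1\cup U_2\cup U_3\cup U_4$ with each $U_i$ a union of long $p_i^3$-fibers, and then shifts each long fiber along its own direction by a multiple of $M_i=M/p_i^4$ (a Szab\'o-style fiber shift). The crucial observation is that a shift by $M_i$ is invisible modulo any $L$ with $i\notin J:=\{j:p_j^4\mid L\}$; hence for $s=\prod_{j\in J}p_j^{\alpha_j}$ with $\alpha_j\in\{2,3,4\}$ one gets $\Phi_s\mid A_i$ directly from $F_{i,3}$ when $i\in J$, and $A_i\equiv U_i\bmod\prod_{j\in J}p_j^4$ when $i\notin J$, so $\Phi_s\mid\sum_{i\notin J}A_i$ is inherited from $U$. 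The numerical hypotheses enter only to make the four-piece decomposition of $U$ possible with $p_i\mid |Q_i|$, so that each $Q_i$ can be split into $p_i$ equal parts whose shifts install the unsupported divisor $\Phi_N$. This preservation-under-shifting mechanism is the idea your proposal is missing.
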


\begin{proof}
Let $N=p_1p_2p_3p_4$ for short, and recall that
$$
F_{i,3}(X)=\frac{X^M-1}{X^{M/p_i^3}-1} \hbox{ for }i=1,2,3,4.
$$
Define a set $U\subset\ZZ_M$ via
$$
U(X)=\prod_{i=1}^4 F_{i,3}(X),
$$
so that $U=N\ZZ_M \simeq \ZZ_{D(M)}$.
By Lemma \ref{lma:longfibersdivisors}, $F_{i,3}$ is divisible by all $\Phi_s$ such that $p_i^2\mid s \mid M$, since then we have $s \mid M$ but $s\nmid (M/p_i^3)$.
It follows that $U$ has the prime power cyclotomic divisors indicated in (i), and satisfies both (T1) and (T2).
In the terminology of \cite{LL1}, $U$ is a standard tiling set with these cyclotomic divisors. We also have $U\oplus B=\ZZ_M$, where
\begin{equation}\label{4p-e2}
B(X)=\prod_{i=1}^4 (1+X^{M_i}+\dots+X^{(p_i-1)M_i}).
\end{equation}

We do not have $\Phi_N \mid U$, since $U$ modulo $N$ is simply the point $0$ with weight $|U|=N^3$. However, we will now rearrange the long fibers (i.e., sets with mask polynomial $X^lF_{i,3}(X)$ for some $l$) in $U$ to get a set $A$ whose mask polynomial does have $\Phi_N$ as an additional divisor.

We first construct a decomposition
$$U=U_1\cup U_2\cup U_3\cup U_4,
$$
where the sets $U_i$ are nonempty, pairwise disjoint, and each $U_i$ is a union of long fibers in the $p_i$ direction. Specifically, let $d_1,d_2,d_3$ be integers such that
\begin{equation}\label{4p-e3}
1\leq d_i<p_i^3-1,\ i=1,2,3.
\end{equation}
For any $x\in U$, we can represent its array coordinates $(x_1,x_2,x_3,x_4)$ as $x_i=p_i\bar x_i$, We then define
$$
U_i:= Q_i*F_{i,3},\ \ i=1,2,3,4,
$$
where
\begin{equation}\label{4primes-defU}
\begin{split}
Q_1&:=\{(0,x_2,x_3,x_4)\in U:\ d_3\leq \bar x_2<p_2^3,\ 0\leq \bar x_3<d_1,\ \bar x_4\hbox{ arbitrary}\},
\\
Q_2&:=\{(x_1,0,x_3,x_4)\in U :\ 0\leq \bar x_1<d_2,\ d_1\leq \bar x_3<p_3^3,\ \bar x_4\hbox{ arbitrary}\},
\\
Q_3&:=\{(x_1,x_2,0,x_4)\in U:\ d_2\leq \bar x_1<p_1^3 ,\ 0\leq \bar x_2<d_3,\ \bar x_4\hbox{ arbitrary}\},
\\
Q_4&:=\{(x_1,x_2,x_3,0)\in U:\ 0\leq \bar x_1<d_2,\ 0\leq \bar x_2<d_3,\ 0\leq \bar x_3<d_1\}
\\
&\cup
\{(x_1,x_2,x_3,0)\in U:\ d_2\leq \bar x_1<p_1^3,\ d_3\leq \bar x_2<p_2^3,\ d_1\leq \bar x_3<p_3^3\}.
\end{split}
\end{equation}
We check that each $x\in U$ belongs to exactly one of the sets $U_i$. The proof is as follows. Let $x\in U$.

\begin{itemize}
\item Suppose first that $\bar x_1<d_2$. If $\bar x_3\geq d_1$, then $x\in U_2$.
 If $\bar x_3< d_1$ and $\bar x_2\geq d_3$, then $x\in U_1$.
  If $\bar x_3< d_1$ and $\bar x_2< d_3$, then $x\in U_4$.
These choices are unique.
\smallskip

\item Assume now that $\bar x_1\geq d_2$. If $\bar x_2 <d_3$, then $x\in U_3$.
If $\bar x_2 \geq d_3$ and $\bar x_3<d_1$, then $x\in U_1$.
If $\bar x_2 \geq d_3$ and $\bar x_3\geq d_1$, then $x\in U_4$. These choices, again, are unique.
\end{itemize}

Next, we claim that the parameters $d_1,d_2,d_3$ may be chosen so that
\begin{equation}\label{4p-e4}
p_i\mid |Q_i|\hbox{ for }i=1,2,3,4.
\end{equation}
We already have $p_i^3\mid |U_i|$ by construction (since $p_i^3\mid |F_{i,3}|$); by (\ref{4p-e4}), we actually have $p_i^4\mid |U_i|$ for each $i$.

To ensure (\ref{4p-e4}), we let $d_1:=p_1p_4$, $d_2:=kp_2$ for some $k\in\NN$ to be chosen later, and $d_3:=p_3$. Then:
\begin{itemize}
\item $|Q_1|=d_1(p_2^3-d_3)=p_1p_4(p_2^3-d_3)p_4^3$ is divisible by $p_1$,

\smallskip
\item $|Q_2|=d_2(p_3^3-d_1)=kp_2 (p_3^3-d_1)p_4^3$ is divisible by $p_2$,

\smallskip
\item $|Q_3|=d_3(p_1^3-d_2)=p_3(p_1^3-d_2)p_4^3$ is divisible by $p_3$.
\end{itemize}

We now consider $Q_4$, with
$$
|Q_4|=d_1d_2d_3+ (p_1^3-d_2)(p_2^3-d_3) (p_3^3-d_1).
$$
We have $p_4\mid d_1\mid d_1d_2d_3$. To ensure that $p_4$ also divides the second term in $|Q_4|$, it suffices to choose $k$ so that $p_4$ divides $p_1^3-d_2$. The numbers $p_1^3-kp_2$ with $k=1,2,\dots,p_4$ all have distinct residues mod $p_4$, so that there exists a value of $k$ such that $p_4$ divides $p_1^3-kp_2$, as claimed.

It remains to check that the above values of $d_1$ and $d_2$ are permissible, in the sense that they obey (\ref{4p-e3}). It suffices to ensure that
$$
p_1p_4<p_3^3-1\hbox{ and }p_4p_2< p_1^3-1.
$$
This follows if we verify that $p_ip_j < p_1^3-1 $ for any choice of distinct indices $i,j$ since $p_1^3-1 \le p_k^3-1$ for every $k$. By (\ref{4p-e1}), we have
$$
p_ip_j\leq p_3p_4<(4p_1)(8p_1)=32p_1^2<p_1^3-1,
$$
as claimed.

We are now ready to rearrange our initial set $U$ to produce $A$. For each $i=1,2,3,4$, we divide $Q_i$ into $p_i$ pairwise disjoint subsets of cardinality $|Q_i|/p_i$ each:
$$
Q_i=Q_{i,1}\cup\dots \cup Q_{i,p_i},\  \ |Q_{i,1}|=\dots=|Q_{i,p_i}|=|Q_i|/p_i.
$$
We then let
$$
A_i:= \bigcup_{j=1}^{p_i}\bigcup_{x\in Q_{i,j}}(x+jM_i)*F_{i,3}.
$$
In other words, if $x*F_{i,3}$ is a long fiber in $U_i$, we shift that fiber in the $p_i$ direction (consistently with the direction of $F_{i,3}$) by an increment of $jM_i$, where $j$ is chosen based on which set $Q_{i,j}$ contains $x$. This is similar to the ``fiber shifting'' constructions of Szab\'o \cite{Sz} (see also \cite{LL2, LL-even}).

Let $A=A_1\cup A_2\cup A_3\cup A_4$.
We prove that $A$ is a set. To do so, it suffices to verify that $A_1,A_2,A_3,A_4$ are pairwise disjoint. Let $a\in A_i$ and $a'\in A_j$ for some $i\neq j$. Then $a=u+\nu M_i$ and $a'=u'+\nu'M_j$ for some $u\in U_i$, $u'\in U_j$, $\nu\in\{1,\dots,p_i\}$, and $\nu'\in\{1,\dots,p_j\}$. We claim that
\begin{equation}\label{4primes-e60}
\exists k\not\in\{i,j\} \hbox{ such that }u_k\neq u'_k.
\end{equation}
Then $a_k=u_k\neq u'_k=a'_k$, so that $a\neq a'$ as claimed.

The proof of (\ref{4primes-e60}) is by direct case-by-case verification based on (\ref{4primes-defU}).
\begin{center}
\begin{equation*}
\begin{tabular}{ | m{1.5cm} || m{1 cm}| m{1cm} | m{1.5 cm} | m{1cm} | m{1.5 cm} | m{1.5 cm} |}
\hline
$i,j$ & 1,2 & 1,3& 1,4& 2,3 & 2,4& 3,4  \\
  \hline
$k$ & 3 & 2 & 2 or 3 & 1 & 1 or 3& 1 or 2  \\
  \hline
\end{tabular}
\end{equation*}
\end{center}
In the third and last two cases, the value of $k$ depends on whether $(u'_1,u'_2,u'_3,0)$ belongs to the first or second set in the definition of $Q_4$ in (\ref{4primes-defU}).

Since $|A_i|=|U_i|$ for each $i$, it follows that $|A|=|U|=N^3$, where we recall that $N=p_1p_2p_3p_4$. We now check that $\Phi_N\mid A$, where $N=p_1p_2p_3p_4$. It suffices to check that $\Phi_N\mid A_i$ for each $i=1,2,3,4$. Fix such $i$. For each $x\in Q_{i,j}$,
we have $x\equiv 0$ and $F_{i,3}(X)\equiv 0$ mod $N$, so that
the fiber $(x+jM_i)*F_{i,3}$ reduced modulo $N$ is simply the point $j'N/p_i$ with multiplicity $p_i^3$, where $j'\equiv j(N/p_i)^3 \pmod{p_i}$. Thus $A_i$ reduced modulo $N$ is the fiber $F^N_i$ with multiplicity $p_i^{-1}|Q_i|p_i^3=|Q_i|p_i^2$. It follows that $A_i$ is divisible by $\Phi_N$ as required.

We now present two different methods of verification that $A$ has the cyclotomic divisors claimed in parts (i) and (ii) of the theorem. Each method provides a different insight as to whether a similar construction could also furnish a counterexample to the Coven-Meyerowitz conjecture; we discuss this in more detail after the proof of the theorem is completed.

\medskip
\noindent
{\bf Method 1: Divisor sets.} We note that the set $B$ defined in (\ref{4p-e2}) is a {\em standard tiling complement} in the terminology of \cite{LL1}. If we can prove that
\begin{equation}\label{4p-e66}
A\oplus B=\ZZ_M,
\end{equation}
it follows that $\Phi_s\mid A$ for all $s \mid M$ such that $s\neq 1$ and $\Phi_s \nmid B$. This includes all $\Phi_{p_i^\alpha}(X)$ for all $i=1,2,3,4$ and $\alpha=2,3,4$ (by Lemma \ref{prime-power-disjoint}, $A$ cannot have any other prime power cyclotomic divisors), as well as all cyclotomic divisors required by (T2) (see \cite[Proposition 3.4]{LL1}).

By Sands's Theorem \cite{Sands},
(\ref{4p-e66}) will follow if we prove that $\Div(A)\cap\Div(B)=\{M\}$, where
$$
\Div(A)=\{(a-a',M):\ a,a'\in A\}
$$
and similarly for $B$. Let us write $d=\prod_{i=1}^4 p_i^{\delta_i} $, where $\delta_i \in \{0,1,2,3,4\}$  for a divisor of $M$.
We have
$$
\Div(B)=\{d=\prod_{i=1}^4 p_i^{\delta_i} \in\ZZ_M:\ \forall i\in\{1,2,3,4\}, \hbox{ either }\delta_i=0\hbox{ or } \delta_i=4\}.
$$
In other words $d_i=0$ or $p_i \nmid d_i$ if we write $d=\sum_{i=1}^4d_iM_i$.
It therefore suffices to prove that
\begin{equation}\label{4p-divA}
\Div(A)\subset\{M\}\cup\{d\in\ZZ_M:\ \exists k\in\{1,2,3,4\}\hbox{ such that } \delta_k \notin\{0,4\}
\}.
\end{equation}
Indeed, suppose that $a,a'\in A$ satisfy $a\neq a'$. As before, we write $a=u+\nu M_i$ and $a'=u'+\nu'M_j$ for some $u\in U_i$, $u'\in U_j$, $\nu\in\{1,\dots,p_i\}$, and $\nu'\in\{1,\dots,p_j\}$. We need to prove that $(a-a',M)$ belongs to the set on the right-hand side of (\ref{4p-divA}). We consider the following cases.
\begin{itemize}
\item If $i\neq j$, we choose $k$ as in (\ref{4primes-e60}); since $p_k$ divides both $u_k$ and $u'_k$,
the claim in (\ref{4p-divA}) is true with this value of $k$.
\item If $i=j$ and there exists $k\neq i$ such that $u_k\neq u'_k$, then the claim is true for this $k$.
\item If $i=j$ and $u_\ell=u'_\ell$ for all $\ell\neq i$, we must have $\nu\neq \nu'$, so that the claim is true with $k=i$.

\end{itemize}

\medskip
\noindent
{\bf Method 2: Direct verification.}
We need to prove that $\Phi_s\mid A$ for all $s=\prod_{j\in J}p_j^{\alpha_j}$, where $J\subset\{1,2,3,4\}$, $J\neq\emptyset$, and $\alpha_j\in\{2,3,4\}$ for all $j\in J$. We fix such $s$ for the rest of the proof.

For all $i\in J$, we have $\Phi_s\mid F_{i,3}$ by Lemma \ref{lma:longfibersdivisors}. It follows that
\begin{equation}\label{4p-e90}
\Phi_s\mid U_i\hbox{ and } \Phi_s\mid A_i\hbox{ for all }i\in J.
\end{equation}
Further, since $\Phi_s\mid U$ as noted above, we also have
\begin{equation}\label{4p-e91}
\Phi_s(X) \ \Big| \ U(X)-\sum_{i\in J}U_i(X) = \sum_{i\not\in J} U_i(X).
\end{equation}
Let $L=\prod_{j\in J}p_j^4$. Then $s\mid L$, so that for any polynomial $G(X)$ we have $\Phi_s\mid G$ if and only if $\Phi_s\mid (G\bmod L)$. In particular, $\Phi_s$ divides $\sum_{i\not\in J} U_i(X)$ mod $X^L-1$. However, we have $U_i\equiv A_i$ mod $L$ for all $i\not\in J$, so that
$$
\sum_{i\not\in J} U_i(X)\equiv  \sum_{i\not\in J} A_i(X) \mod X^L-1.
$$
It follows from (\ref{4p-e91}) that $\Phi_s\mid \sum_{i\not\in J} A_i(X)$. This together with (\ref{4p-e90}) implies that $\Phi_s\mid A$ as claimed.
\end{proof}

\subsection{Further remarks}
\label{subsec-4primes-discussion}

We have not tried to optimize the size of $M$ or the conditions on the size of the primes, so that improvements in that regard may be possible. On the other hand, no similar construction based on shifting fibers can work when $M$ has only three distinct prime factors, since the three-prime analogue of $U$ does not admit a decomposition similar to (\ref{4primes-defU}). We do not know whether the answer to Question \ref{Q2} in $\ZZ_M$ is positive or negative in that case.

We do not know whether some modification of the construction in Theorem \ref{thm-T2-4primes} could be used to give a counterexample to the Coven-Meyerowitz conjecture. We do not see an obvious reason why this could not ultimately work, but there are also significant obstacles, which we now discuss.

We adopt the notation from the proof of Theorem \ref{thm-T2-4primes}. As explained in the introduction, one could try to construct a tiling $A\oplus B'=\ZZ_M$, where $B'$ is not divisible by $\Phi_N$. It is easy to find sets $B'$ that satisfy some (but not all) of the requirements to be a tiling complement for $A$. For instance, let $B'$ be any set in $\ZZ_M$ such that
$$
B'(X)\equiv \frac{X^N-1}{X-1} + \prod_{i=1}^4 (X^{N/p_i}-1) \mod X^N-1.
$$
Then $\Phi_s\mid B'$ for all $s$ such that $s\in\cald(N)\setminus\{N\}$ (in particular $\Phi_{p_i}\mid B'$ for all $i\in\{1,2,3,4\}$), but $\Phi_N\nmid B'$. This, however, is not sufficient to produce a tiling.

An argument similar to that in the Method 2 part of the proof of Theorem \ref{thm-T2-4primes} shows that $\Phi_s\mid A$ for $s=\prod_i p_i^{\alpha_i}$, where $\alpha_i=1$ for exactly one value of $i$. However,
this leaves out all $\Phi_s$ with $\alpha_i=1$ for two or three values of $i$. We do not see how to ensure that, in addition to all of the above properties, at least one of $A$ or $B'$ has those divisors.

Alternatively, one could consider the divisor sets of $A$ and $B'$. In constructions like the one above (possibly with minor modifications of the parameters), we expect that $\Div(A)$ will occupy most of the set in (\ref{4p-divA}). The only elements of that set that we know to {\it not} belong to $\Div(A)$ are those $d$ for which $p_i\mid d_i\neq 0$ for one value of $i$, and $p_j\nmid d_j$ for all $j\neq i$.
This does not leave much room to construct a tiling complement that is substantively different from the standard tiling set $B$ (or its easy modifications such as dilates).

We cannot exclude the possibility that, starting from a similar construction for $A$ but with more scales or distinct prime factors, alternative tiling complements not satisfying (T2) could in fact be found.


\section*{Acknowledgement}
We express our gratitude to the anonymous referee for their comments and suggestions, which significantly improved the clarity and precision of the paper. In particular, we are thankful for the simplification at the end of the proof of Proposition 9.4.

The research was partly carried out at the Erd\H os Center, R\'enyi Institute, in the framework of the semester``Fourier analysis and additive problems".

The first author was supported by Hungarian National Foundation for Scientific Research NKFIH, Grants STARTING 150576, FK 142993, Excellence 154121 and by the J\'anos Bolyai Research Fellowship of the Hungarian
Academy of Sciences.
The second author was supported by NSERC Discovery Grant 22R80520.
The third author was supported by NSERC Discovery Grants 22R80520 and GR010263.
The fourth author was supported by Hungarian National Foundation for Scientific Research, Grants
OTKA K138596 and STARTING Grant 150576 and ARC Discovery Project DP250104965


\noindent
		{\sc Gergely Kiss:}\\
        Corvinus University of Budapest, Department of Mathematics \\
		Fővám tér 13-15, Budapest 1093, Hungary,\\
        and\\
		HUN-REN Alfr\'ed R\'enyi Mathematical Institute\\
		Re\'altanoda utca 13-15, H-1053, Budapest, Hungary\\
		E-mail: {\tt kiss.gergely@renyi.hu}

\medskip

\noindent{{\sc Izabella {\L}aba:}  \\
			The University of British Columbia, Department of Mathematics \\ 1984 Mathematics Road, Vancouver, Canada, V6T 1Z2}, \\
		E-mail: {\tt ilaba@math.ubc.ca}

\medskip

\noindent{{\sc Caleb Marshall:}  \\
			The University of British Columbia, Department of Mathematics \\ 1984 Mathematics Road, Vancouver, Canada, V6T 1Z2}, \\
		E-mail: {\tt cmarshall@math.ubc.ca}

 \medskip

\noindent{{\sc G\'abor Somlai:}  \\
			E\"otv\"os Lor\'and University, Institute of Mathematics, Algebra and Number Theory
            Department \\ P\'azm\'any P\'eter s\'et\'any 1/C, Budapest, Hungary, H-1117}, \\
            and\\
            HUN-REN Alfr\'ed R\'enyi Mathematical Institute\\
		Re\'altanoda utca 13-15, H-1053, Budapest, Hungary\\
		E-mail: {\tt gabor.somlai@ttk.elte.hu and gabor.somlai@unimelb.edu.au}

\end{document}